\documentclass{article}

\usepackage{amsthm, mathtools,mathrsfs}
\usepackage{titlesec}
\usepackage{enumitem}
\setenumerate[0]{label=(\roman*)}
\titlelabel{\thetitle.\quad}

\usepackage{amsmath}
\usepackage{amssymb}
\usepackage{relsize}

\usepackage{url}
\usepackage{
    nameref,
    hyperref,
    cleveref
}

\hypersetup{
    unicode=true,
    pdftitle={Doeblin Trees},
    pdfauthor={James T. Murphy III},
    colorlinks,
    linktocpage,
    citecolor=blue,
    linkcolor=blue,
    urlcolor=black,
    filecolor=blue
}
\setlength{\emergencystretch}{3pt}

\renewcommand{\leq}{\leqslant}
\renewcommand{\geq}{\geqslant}

\newcommand{\R}{\mathbb{R}}

\newcommand{\F}{\mathcal{F}}

\newcommand{\B}{\mathbf{B}}
\newcommand{\N}{\mathbb{N}}

\renewcommand{\P}{\mathbf{P}}
\newcommand{\E}{\mathbf{E}}
\newcommand{\Z}{\mathbb{Z}}

\newcommand{\TR}{\mathbf{R}}

\newcommand{\Gr}{\Gamma}

\newcommand{\eft}{{EFT}}
\newcommand{\eff}{{EFF}}
\newcommand{\beft}{bridge \eft{}}
\newcommand{\befts}{bridge \efts{}}
\newcommand{\beff}{bridge \eff{}}

\newcommand{\efts}{{EFTs}}
\newcommand{\effs}{{EFFs}}

\newcommand{\deft}{Doeblin \eft{}}
\newcommand{\deff}{Doeblin \eff{}}

\newcommand{\deffs}{Doeblin \effs{}}

\newcommand{\defn}[1]{\textbf{#1}}
\newcommand{\bs}[1]{\boldsymbol{#1}}
\newcommand{\Fam}[1]{{({#1})}}
\newcommand{\set}[1]{\left\{{#1}\right\}}

\renewcommand{\G}{\mathbf{G}}

\newcommand{\hgen}{h_{\mathrm{gen}}}

\newcommand{\fnext}{f_+}
\newcommand{\Xiuniv}{\Xi_{\mathrm{univ}}}

\newcommand{\sq}{\mathsmaller{\square}}
\newcommand{\Psq}{\P^{\mathsmaller{\square}}}
\newcommand{\Esq}{\E^{\mathsmaller{\square}}}

\newtheorem{theorem}{Theorem}[section]

\newtheorem{lemma}[theorem]{Lemma}
\crefname{lemma}{lemma}{lemmas}
\Crefname{lemma}{Lemma}{Lemmas}

\newtheorem{proposition}[theorem]{Proposition}
\crefname{proposition}{proposition}{Propositions}
\Crefname{proposition}{Proposition}{Propositions}

\newtheorem{corollary}[theorem]{Corollary}
\crefname{corollary}{corollary}{corollaries}
\Crefname{corollary}{Corollary}{Corollaries}

\newtheorem{assumption}[theorem]{Assumption}
\crefname{assumption}{assumption}{assumptions}
\Crefname{assumption}{Assumption}{Assumptions}

\newtheorem{remark}[theorem]{Remark}
\crefname{remark}{remark}{remarks}
\Crefname{remark}{Remark}{Remarks}

\theoremstyle{definition}

\crefname{definition}{definition}{definitions}
\Crefname{definition}{Definition}{Definitions}

\newtheorem{example}[theorem]{Example}
\crefname{example}{example}{examples}
\Crefname{example}{Example}{Examples}

\newenvironment{postponedproof}{\noindent\ignorespaces\emph{Proof (sketch).}}{\hfill\\}
\setlength{\tabcolsep}{5pt}

\begin{document}

\title{\textsc{Doeblin Trees}}
\author{Fran\c{c}ois~Baccelli\thanks{The University of Texas at Austin, 
\href{mailto:baccelli@math.utexas.edu}{\nolinkurl{baccelli@math.utexas.edu}}},~ 
    Mir-Omid~Haji-Mirsadeghi\thanks{Sharif University of Technology, 
    \href{mailto:mirsadeghi@sharif.edu}{\nolinkurl{mirsadeghi@sharif.edu}}},~ and \vspace{-.4cm}\\
    James~T.~Murphy~III\thanks{The University of Texas at Austin, 
    \href{mailto:james@intfxdx.com}{\nolinkurl{james@intfxdx.com}}}
}

\maketitle{}
\vspace{-.5cm}
\begin{abstract}
This paper is centered on the random graph generated by a Doeblin-type coupling
of discrete time processes on a countable state space whereby when two paths meet, they merge.
This random graph is studied through a novel subgraph,
called a bridge graph, generated by paths started in
a fixed state at any time.
The bridge graph is made into a unimodular network by marking it and selecting a root
in a specified fashion.
The unimodularity of this network is leveraged to discern global properties of the larger Doeblin graph.
Bi-recurrence, i.e., recurrence both forwards and backwards in time,
is introduced and shown to be a key property in uniquely distinguishing 
paths in the Doeblin graph, and also a decisive property for Markov chains
indexed by $\Z$.
Properties related to simulating the bridge graph are also studied.
\end{abstract}

\vspace{.2cm}
\noindent\textbf{MSC2010:} 
05C80, 
60J10, 
60G10, 
60D05. 

\vspace{.2cm}
\noindent\textbf{Keywords:} 
Doeblin graph, 
bridge graph,
bi-recurrent path,
unimodular network,
eternal family tree,
coupling from the past,
Markov chain.
\vspace{-.3cm}
\tableofcontents{}

\section{Introduction}
The first incarnation of the Propp and Wilson~\cite{propp1996exact} coupling from the past (CFTP) algorithm
was designed to build a perfect sample from the stationary distribution $\pi$
of an irreducible, aperiodic, and positive recurrent Markov chain on a finite state space $S$.
It uses a Doeblin-type coupling of a family of copies of the Markov chain started
in all possible states at all possible times, whereby when two chains meet, they merge.
This coupling is represented with a random directed graph on $\Z \times S$ depicting the trajectories
of these Markov chains. Below, this random graph will be referred to as the \defn{Doeblin graph} of the chain.

Prior to this research, the study of this random graph has been
mostly a by-product of research on perfect simulation. In 1992--1993,
Borovkov and Foss~\cite{borovkov1992stochastically, borovkov1994two} laid out the framework of
stochastically recursive sequences (SRS), of which Markov chains are a special case,
and they proved the main results on the existence of a stationary
version of an SRS to which non-stationary versions converge in a certain sense.
The CFTP algorithm itself was introduced by Propp and Wilson in 1996 in~\cite{propp1996exact}
for obtaining samples from the stationary distribution of a Markov chain. 
The CFTP algorithm can be seen as a specialization of the general ideas of~\cite{borovkov1992stochastically} for SRS 
to the Markov case aiming at perfect simulation.
Foss and Tweedie~\cite{foss1998perfect} then gave a necessary and sufficient condition for the CFTP algorithm to converge a.s.
From 1996 to 2000, many 
papers~\cite{fill1997interruptible, propp1998coupling, murdoch1998exact, propp1998get, kendall1998perfect,
haggstrom1999exact, haggstrom1999characterization, moller1999perfect, kendall2000perfect, wilson2000couple, meng2000towards} 
investigated how to improve CFTP implementations or how to apply
CFTP or a CFTP-inspired algorithm to obtain a perfect sample from a particular
Markov chain's stationary distribution.
Of particular importance is Wilson's read-once CFTP algorithm~\cite{wilson2000couple}, which
allows CFTP to be done by only simulating forwards in time.
A review of perfect simulation in stochastic geometry up to that point is provided
in~\cite{moller2001review}.
Since then,~\cite{kendall2004geometric,connor2007perfect} showed that (possibly
impractical) generalizations of the CFTP 
algorithm can be applied under weaker conditions,
and~\cite{foss2003extended} gives a CFTP-like algorithm that applies even in the non-Markovian setting.

In this paper, focus is shifted away from finding an individual sample from the stationary distribution of a Markov chain,
and instead properties of the Doeblin graph as a whole are studied.
The SRS framework will be used, but, because the Markov case is a fundamental special case,
most sections will spell out what can be said in the Markov case.
The main tool of study is the theory of unimodular random (rooted) networks in the sense of Aldous and Lyons~\cite{aldous2007processes}.
Unimodular networks are rooted networks where, heuristically, the root is picked uniformly at random.
In order to generalize this concept for infinite networks, instead of picking
the root uniformly at random, the network is required 
to satisfy a mass transport principle.  The primary new object of study is the subgraph of bridges
between a fixed recurrent state, which is referred to as the \defn{bridge graph}
and is roughly inspired by the population process in~\cite{baccelli2018renewal}.
The subgraph is defined by looking at processes started at any time from this fixed state.
General setup and definitions of the Doeblin graph and the bridge graph are given in \Cref{sec:doeblin-graphs}.
Random networks and how to view subgraphs like the bridge graph as random networks are handled in \Cref{sec:randomnetworks}.
The main theorem is then proved in \Cref{sec:main-thm}.

\Cref{sec:unimodularizability-of-bridge} proves the main theorem, identifying the unimodular structure in the bridge graph.
\Cref{sec:i-f-component-properties} studies properties of the bridge graph that are inherited due to
its I/F component structure as a unimodular network.
Here I/F refers to the class of a component in the sense of the foil classification theorem in
unimodular networks in~\cite{baccellieternal}, which is reviewed in \Cref{sec:randomnetworks}.
The most interesting case is when $S$ is infinite and the Doeblin graph is connected. 
In this case (see \Cref{cor:ergodic-meft-unique-bi-recurrent}),
although there may be infinitely many bi-infinite paths in the Doeblin graph, there exists a unique
\defn{bi-recurrent} path, a bi-infinite path that visits every state infinitely often in the past, as well as in the future.
This unique path also has the property that the states in $S$ that the path traverses 
form a stationary version of the original Markov chain (or SRS), and hence
give samples from its stationary distribution.
Indeed, the original CFTP algorithm ultimately computes the time zero point on the bi-recurrent path.
By embedding Markov chains inside Doeblin graphs, 
bi-recurrence is also shown to be a decisive property for Markov chains indexed by $\Z$.
\Cref{thm:stationary-iff-birecurrent} shows that if a Markov chain $\Fam{X_t}_{t \in \Z}$ 
has an irreducible, aperiodic, and positive recurrent transition matrix, then $\Fam{X_t}_{t \in \Z}$
is stationary if and only if it is bi-recurrent for any (and hence every) state.
The I/F structure of a component leads to further useful qualitative properties discussed in \Cref{sec:other-i-f-properties}.
In reversed time, the bridge tree can be seen as a multi-type branching-like process where the types are
the elements of $S$, and for which there is at most one child of each type per generation.
The nodes in this branching process are either mortal (i.e., with finitely many descendants) or immortal (resp.\ infinitely
many). The mortal descendants of the nodes on the bi-infinite path form a stationary sequence of finite trees.
Mean values in these trees are linked to coupling times by mass-transport relations.
Finally, \Cref{sec:simulation-applications} gives results that are relevant to simulating the bridge graph, 
such as approximating the bridge graph by finite networks, and viewing the process of
vertical slices of the bridge graph as a Markov chain in its own right.
The final section gives several bibliographical comments, which make connections of the present research to other works. 
\section{The Doeblin Graph}\label{sec:doeblin-graphs}
\subsection{Definition}\label{sec:doeblin-graph-denf}
In this section, the Doeblin graph is constructed.
Fix a probability space $(\Omega, \F, \P)$, a countable state space $S$,
and a complete separable metric space $\Xi$ for the remainder of the document.
The first ingredient needed is a \defn{pathwise transition generator}, a function $h_{\mathrm{gen}}: S\times \Xi \to S$
that will be used for determining transitions between states of $S$.
Such an $\hgen$, combined with a \defn{driving sequence} $\Fam{\xi_t}_{t \in \N}$,
is used to give a pathwise representation of a stochastic process $\Fam{X_t}_{t \in \N}$ satisfying
\begin{equation}\label{eqn:srs}
    X_{t+1} := \hgen(X_t, \xi_t),\qquad t \geq 0.
\end{equation}
\Cref{eqn:srs} is the defining property of a stochastically recursive sequence (SRS)
in the sense of Borovkov and Foss~\cite{borovkov1992stochastically}.
If the driving sequence is taken to be i.i.d.\ and independent of $X_0$,
then $\Fam{X_t}_{t \in \N}$ is
a (discrete time) Markov chain with transition matrix $P = \Fam{p_{x,y}}_{x,y \in S}$ determined by
$p_{x,y} := \P(\hgen(x,\xi_0) = y)$ for each $x,y \in S$.
It is a classical result that, when $\Xi := [0,1]$,
all possible transition matrices $P$ can be achieved by choosing $\hgen$ and the distribution of $\xi_0$ accordingly
(c.f.\ Chapter 17 in~\cite{borovkov2013stochastic}).
Many processes in this paper will be indexed by $\Z$ or an interval of $\Z$ instead of just $\N$.
The pathwise transition generator $\hgen$ and a stationary and ergodic \emph{bi-infinite}
driving sequence
$\xi:= \Fam{\xi_{t}}_{t\in \Z}$, are fixed for the remainder of the document.
The notation for the transition matrix $P = \Fam{p_{x,y}}_{x,y \in S}$ is also
fixed for the remainder of the document, even when $\xi$ is not assumed to be i.i.d.

The space $\Z \times S$ should be thought of as time and space coordinates, with
$(t,x) \in \Z \times S$ being in state $x$ at time $t$.
The vertices and edges of a graph $\Gr$ will be written $V(\Gr)$ and $E(\Gr)$, and if
$V(\Gr) \subseteq \Z \times S$,
the vertices of $\Gr$
sitting at a particular time $t$ or in a particular state $x$ will be denoted, respectively, as
\begin{align}
    V_t(\Gr)  := \set{(s,y) \in V(\Gr) : s=t},\quad
    V^x(\Gr)  := \set{(s,y) \in V(\Gr) : y=x}.
\end{align}
Note that $V_t(\Gr)$ and $V^x(\Gr)$ are subsets $\Z \times S$, i.e.\ 
their elements have both a time component
and a space component.
If instead just states (elements of $S$) or just times (elements of $\Z$) are desired,
then the following are used instead
\begin{align}\label{defn:subscriptt}
    \Gr_t := \set{x \in S:  (t,x) \in V_t(\Gr)},\quad
    \Gr^x := \set{t \in \Z: (t,x) \in V^x(\Gr)}.
\end{align}

Then the \defn{Doeblin graph} $\G= \G(\hgen, \xi)$ is constructed as follows.
It has vertices $V(\G) := \Z \times S$.
The edges of $\G$ are determined by the 
\defn{follow map} $\fnext: V(\G) \to V(\G)$, which is a random map
giving directions of where each vertex
should move to in the next time step.
It is defined by
\begin{equation}
    \fnext(t,x) := (t+1,\hgen(x,\xi_{t})),\qquad (t,x) \in \Z \times S.
\end{equation}
That is, let the edges of $\G$ be drawn from each $(t,x) \in \Z \times S$ to $\fnext(t,x)$.
By saying a function $f:A \to B$ is a random map, it is meant that $f:A\times \Omega \to B$ is measurable and the second argument will be omitted.
Iterates of $\fnext$ are denoted by $\fnext^n$ for $n \geq 0$.
Thinking of each vertex in $\G$ as an individual, one may also interpret the follow
map as mapping each vertex to its parent vertex.

\subsection{Modeling}\label{sec:doeblin-graph-modeling}

When dealing with paths in $\G$, it will often be convenient to ignore the time coordinate
and focus only on the space coordinate.
If $\Fam{X_t}_{t \in I}$ is a stochastic process defined on $\Omega$ which takes values in $S$
and is such that $\Fam{t,X_t}_{t \in I}$ is a.s.\ a path in $\G$ over some fixed time interval $I\subseteq \Z$, then
$\Fam{X_t}_{t \in I}$ is called the \defn{state path (in $\G$)} corresponding to the path $\Fam{t,X_t}_{t \in I}$.
That is, there are two ways of looking at every route through $\G$:
as a path $\Fam{t,X_t}_{t \in I} \subseteq V(\G)$, or as a state path $\Fam{X_t}_{t \in I} \subseteq S$.

\begin{lemma}\label{lem:embed-srs-into-doeblin-graph}
    Let $I\neq\emptyset$ be an interval in $\Z$.
    Suppose that $\Fam{X_t}_{t \in I}$ is a stochastic process taking values in $S$ 
    a.s.\ satisfying the recurrence relation
    $X_{t+1} = \hgen(X_t, \xi_t)$ for each $\inf I \leq t < \sup I$, where $\hgen$ and $\Fam{\xi_t}_{t \in I}$ are the same as are used to define $\G$.
    Then $\Fam{X_t}_{t \in I}$ is a state path in $\G$.
\end{lemma}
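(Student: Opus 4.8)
The plan is to reduce the claim to a direct comparison of definitions: the recurrence relation $X_{t+1}=\hgen(X_t,\xi_t)$ is, after unwinding the definition of the follow map, exactly the statement that consecutive vertices $(t,X_t)$ and $(t+1,X_{t+1})$ are joined by a (directed, forward-in-time) edge of $\G$. First I would fix $t$ with $\inf I \leq t < \sup I$ and, on the event where the recurrence holds at time $t$, compute
\[
    \fnext(t,X_t)=\bigl(t+1,\hgen(X_t,\xi_t)\bigr)=(t+1,X_{t+1}),
\]
using the definition $\fnext(s,y)=(s+1,\hgen(y,\xi_s))$ for the first equality and the hypothesis for the second. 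Since the edges of $\G$ are precisely those drawn from each vertex $(s,y)$ to $\fnext(s,y)$, this exhibits an edge of $\G$ from $(t,X_t)$ to $(t+1,X_{t+1})$.

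Next I would observe that the whole sequence $\Fam{(t,X_t)}_{t\in I}$ consists of vertices of $\G$, as $(t,X_t)\in\Z\times S=V(\G)$, and that by the previous step each consecutive pair is connected by an edge of $\G$; hence $\Fam{(t,X_t)}_{t\in I}$ is a path in $\G$ over $I$. The cleanest way to handle the almost-sure qualifier is to note that the event that $\Fam{(t,X_t)}_{t\in I}$ is a path in $\G$ equals $\bigcap_{\inf I\leq t<\sup I}\set{X_{t+1}=\hgen(X_t,\xi_t)}$, which is a measurable event (since the $X_t$ and $\xi_t$ are random variables, $S$ is countable, and $\hgen$ is measurable) and which contains the almost sure event on which the recurrence holds. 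Thus $\Fam{(t,X_t)}_{t\in I}$ is almost surely a path in $\G$, which by definition makes $\Fam{X_t}_{t\in I}$ its corresponding state path.

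Because the argument is purely a matter of matching definitions, I do not expect any substantial obstacle; the only points requiring care are bookkeeping. One must keep every assertion on the appropriate almost sure event, since the recurrence is only assumed to hold almost surely rather than pointwise. One should also dispatch the degenerate case in which $I$ is a singleton: there the range $\inf I\leq t<\sup I$ is empty, no edge condition is imposed, and the one-vertex sequence $\Fam{(t,X_t)}_{t\in I}$ is trivially a path. Finally, one should confirm that ``$\Fam{(t,X_t)}_{t\in I}$ is a path in $\G$'' is a well-defined event, which is immediate from the identification in the previous paragraph together with the hypothesis that $\Fam{X_t}_{t\in I}$ is an $S$-valued stochastic process on $\Omega$.
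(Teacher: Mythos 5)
Your proposal is correct and follows essentially the same argument as the paper's proof: unwind the definition of the follow map to see that the recurrence relation at time $t$ is precisely the statement that $\G$ has an edge from $(t,X_t)$ to $(t+1,X_{t+1})$, and note that every $(t,X_t)$ is automatically a vertex since $V(\G)=\Z\times S$. Your additional bookkeeping (intersecting the a.s.\ events over the countably many $t$, the singleton-interval case, and measurability of the path event) is sound but goes beyond what the paper bothers to spell out.
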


\begin{proof}
    One must check that ${(t, X_t)}_{t \in I}$ is a.s.\ a path in $\G$.
    Fix $t \in I$.
    Since $V(\G) = \Z \times S$, $(t, X_t)$ is certainly a vertex of $\G$.
    If $t+1 \in I$ as well, one must check the edge $e$ from $(t, X_t)$ to $(t+1, X_{t+1})$ is a.s.\ an edge in $\G$.
    The edges of $\G$ are defined to be from each $(t,x) \in \Z \times S$ to $(t+1, \hgen(x,\xi_t))$,
    so the relation $X_{t+1} = \hgen(X_t, \xi_t)$ holding a.s.\ implies the edge $e$ is a.s.\ an edge of $\G$.
\end{proof}

In particular, \Cref{lem:embed-srs-into-doeblin-graph} says that any SRS whose driving sequence
is defined for all times in $\Z$ can be seen as living inside a Doeblin graph,
namely the one generated by its driving sequence and choosing $\hgen$ to be the same as in the definition of the SRS\@.

State paths started at a deterministic vertex will also be used heavily.
For the remainder of the document, 
let $F^{(t,x)}:= \Fam{F^{(t,x)}_s}_{s \geq t}$ be the \defn{state path in $\G$ started at time $t$ in state $x$},
i.e., $F^{(t,x)}$ is a re-indexing of the states traversed by $f_+$ defined by
\begin{align}
    (s, F^{(t,x)}_s) = \fnext^{s-t}(t,x),\qquad (t,x) \in \Z\times S,\quad s \geq t.
\end{align}
One has that $F^{(t,x)}$ is a version of the SRS or Markov chain started
in state $x$ with initial condition given at time $t$.
Generally speaking, throughout the paper, a parenthesized superscript, as in $F^{(t,x)}$, refers to a starting location.
For every $x \in S$, the distribution of $\Fam{F^{(t,x)}_{s+t}}_{s \geq 0}$ does not depend on $t$ because $\xi$ is stationary.
An example of a Doeblin graph and path of $F^{(t,x)}$ are drawn in~\Cref{fig:meft-example}.

\begin{figure}[t!]
    \centering
    \includegraphics[width=\linewidth]{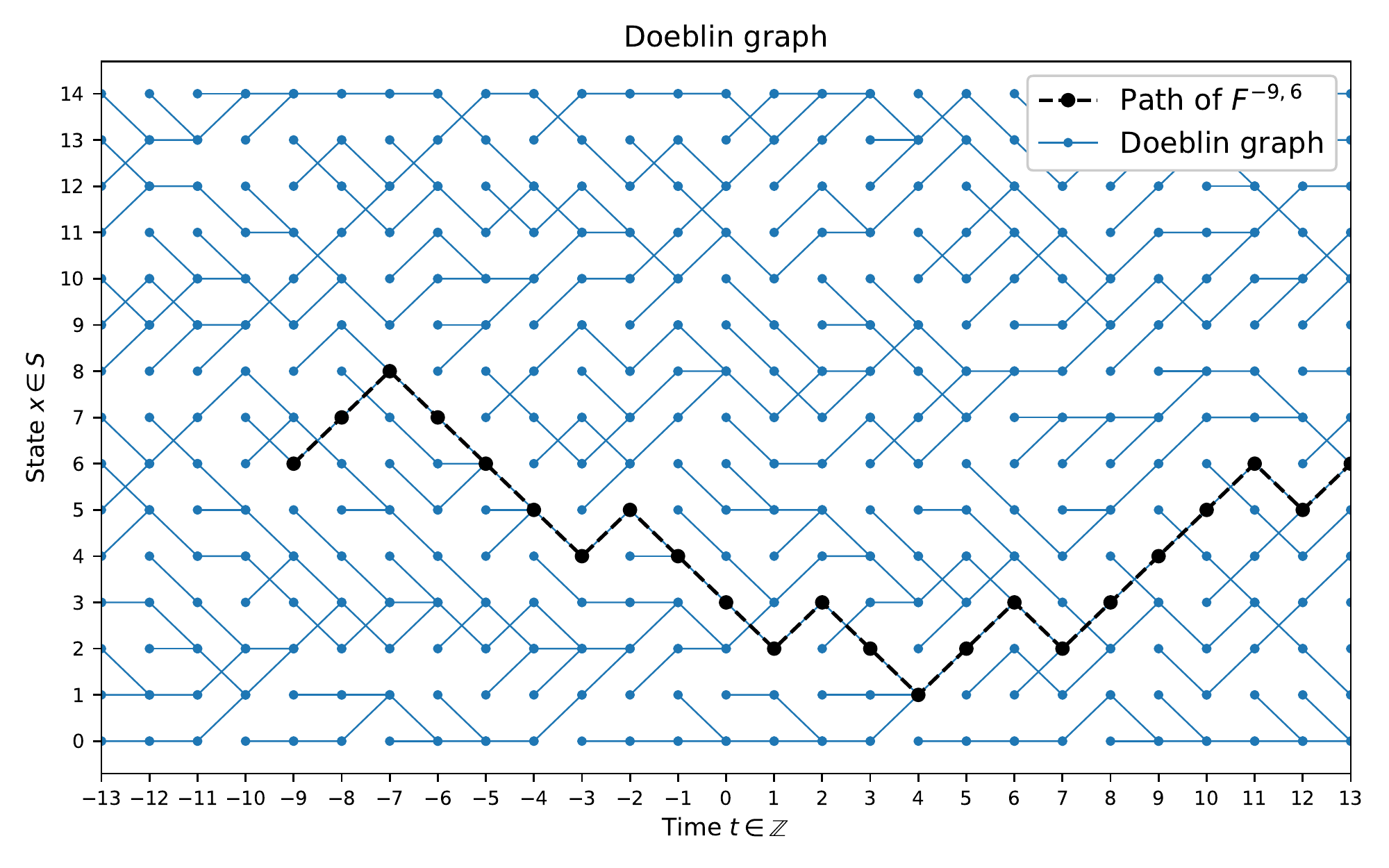}
    \caption{An example of a Doeblin graph with the path corresponding to the state path $F^{(t,x)}$ distinguished.
        All edges are directed from left to right.}%
    \label{fig:meft-example}
\end{figure}

It has already been noted (see~\cite{borovkov2013stochastic}) that a Markov chain $\Fam{X_t}_{t \in \N}$
with any given desired transition matrix can be constructed as an SRS with i.i.d.\ driving sequence.
The following is an analogous result saying that any Markov chain $\Fam{X_t}_{t \in \Z}$ may be realized as
a state path in a Doeblin graph with i.i.d.\ driving sequence.
Note here that the time index set is all of $\Z$, not just $\N$.

\begin{theorem}\label{thm:embed-into-general-meff}
    Suppose that $\Fam{X_t}_{t \in \Z}$ 
    is a Markov chain
    with transition matrix $P$
    on some probability space,
    where $P$ is the same as was defined for the Doeblin graph $\G$.
    Also suppose the driving sequence $\xi$ is i.i.d.
    Then there is a probability space $(\Omega',\F',\P')$ and $\Fam{X_t'}_{t \in \Z}\sim \Fam{X_t}_{t \in \Z}$ on $\Omega'$ such that
    $\Fam{X_t'}_{t \in \Z}$ is state path in $\G'$, where $\G'$ is the Doeblin graph generated
    by some i.i.d.\ driving sequence $\xi'=\Fam{\xi_{t}'}_{t\in \Z} \sim \xi$
    in $\Omega'$ with pathwise transition generator $\hgen$.
    Moreover, for each $t \in \Z$, $X'_t$ is independent of $\Fam{\xi_s'}_{s \geq t}$.
\end{theorem}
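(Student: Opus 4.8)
The plan is to reverse-engineer a driving sequence from the prescribed chain, using independent auxiliary randomness to invert the (many-to-one) maps $\hgen(x,\cdot)$. For each pair $(x,y) \in S\times S$ with $p_{x,y} > 0$, let $\mu_{x,y}$ be the elementary conditional law of $\xi_0$ given the positive-probability event $\set{\hgen(x,\xi_0)=y}$, so that $\mu_{x,y}$ is carried by $\set{\zeta \in \Xi : \hgen(x,\zeta)=y}$. Since $\Xi$ is Polish, the randomization lemma lets me choose measurable maps $\phi_{x,y}\colon [0,1]\to\Xi$ with $\phi_{x,y}(U)\sim \mu_{x,y}$ for $U$ uniform on $[0,1]$, and, as $S$ is countable, glue them into one measurable map. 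On $\Omega'$ I would place a copy $\Fam{X'_t}_{t\in\Z}\sim\Fam{X_t}_{t\in\Z}$ together with an independent i.i.d.\ family $\Fam{U_t}_{t\in\Z}$ of uniforms, and set $\xi'_t := \phi_{X'_t,X'_{t+1}}(U_t)$.

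Two verifications are immediate. Since $\P'(X'_t=x,\,X'_{t+1}=y)=\P'(X'_t=x)\,p_{x,y}$ vanishes whenever $p_{x,y}=0$, almost surely every realized transition $(X'_t,X'_{t+1})$ satisfies $p_{X'_t,X'_{t+1}}>0$, so $\xi'_t$ is a.s.\ well defined; and because $\mu_{x,y}$ is carried by $\set{\zeta:\hgen(x,\zeta)=y}$, one gets $\hgen(X'_t,\xi'_t)=X'_{t+1}$ a.s. Thus, by the reasoning of \Cref{lem:embed-srs-into-doeblin-graph}, $\Fam{X'_t}_{t\in\Z}$ is a state path in the Doeblin graph $\G'$ generated by $\hgen$ and $\xi'$; it then remains only to verify that $\xi'\sim\xi$, so that $\G'$ has the same law as $\G$.

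The crux is to establish simultaneously that $\xi'\sim\xi$ and that $X'_t$ is independent of $\Fam{\xi'_s}_{s\geq t}$. Writing $q_x(A,y):=\P(\xi_0\in A,\ \hgen(x,\xi_0)=y)=p_{x,y}\mu_{x,y}(A)$, the decisive observation is the projection identity $\sum_{y\in S} q_x(A,y)=\P(\xi_0\in A)$, valid for every $x$. Conditioning on $\Fam{X'_s}_{s=t}^{t+n+1}$ and using that the $U_s$ are independent of the chain and of each other, I would compute, with the convention $x_t=x$,
\[
\P'\bigl(X'_t=x,\ \xi'_t\in A_t,\ \dots,\ \xi'_{t+n}\in A_{t+n}\bigr)
=\sum_{x_{t+1},\dots,x_{t+n+1}}\P'(X'_t=x)\prod_{j=0}^{n} q_{x_{t+j}}(A_{t+j},x_{t+j+1}).
\]
Summing out the innermost index $x_{t+n+1}$ via the projection identity produces a factor $\P(\xi_0\in A_{t+n})$ that no longer depends on $x_{t+n}$; iterating this collapse downward telescopes the right-hand side to $\P'(X'_t=x)\prod_{j=0}^{n}\P(\xi_0\in A_{t+j})$.

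Letting $n\to\infty$, this factorization gives both conclusions at once: $X'_t$ is independent of $\Fam{\xi'_s}_{s\geq t}$, and for every $t$ and $n$ the block $(\xi'_t,\dots,\xi'_{t+n})$ is i.i.d.\ with marginal $\mathrm{law}(\xi_0)$; since every finite subfamily of $\Fam{\xi'_s}_{s\in\Z}$ lies in such a block and $\xi$ is itself i.i.d., this yields $\xi'\sim\xi$. The construction is routine; the genuine subtlety is the \emph{backward-in-time} independence, where one might fear that the reconstructed noise $\xi'_s$ at times $s\geq t$ secretly encodes information about the present state $X'_t$. The projection identity is exactly what dissolves this worry, and I expect it to be the heart of the argument. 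Notably, neither irreducibility, aperiodicity, nor positive recurrence of $P$, nor any stationarity of the chain, is needed.
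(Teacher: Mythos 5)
Your proposal is correct, but it proves the theorem by a genuinely different route than the paper. The paper's proof is distributional: it places $\Fam{X_t}_{t\in\Z}$ and the driving sequence $\xi$ on a common space as \emph{independent} objects, considers for each starting time $s_0$ the state path $F^{(s_0,X_{s_0})}$ in $\G$, checks that the joint finite-dimensional distributions of $\bigl(\Fam{F^{(s_0,X_{s_0})}_t}_{t_0\le t\le t_1},\Fam{\xi_t}_{t_0\le t\le t_1}\bigr)$ do not depend on $s_0\le t_0$ (using $\mu_s P^{t-s}=\mu_t$), and then invokes the Kolmogorov extension theorem to produce the pair $(X',\xi')$ as a projective limit. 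You instead keep the given chain fixed and \emph{reconstruct} the noise pathwise: you disintegrate the law of $\xi_0$ along the events $\set{\hgen(x,\xi_0)=y}$, realize each conditional law $\mu_{x,y}$ via the randomization lemma (valid since $\Xi$ is Polish), and set $\xi'_t:=\phi_{X'_t,X'_{t+1}}(U_t)$ on an explicit product space; the heart of your argument is the projection identity $\sum_y q_x(A,y)=\P(\xi_0\in A)$, whose telescoping application yields simultaneously that $\xi'$ is i.i.d.\ with the law of $\xi$ and that $X'_t$ is independent of $\Fam{\xi'_s}_{s\ge t}$. Both arguments are complete and use the same reduction to \Cref{lem:embed-srs-into-doeblin-graph}, with measurability of $\hgen(x,\cdot)$ implicit in either case. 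What the two approaches buy: yours is constructive, realizing $(X',\xi')$ concretely without any consistency check or extension theorem, and it isolates exactly why the worrisome backward-in-time independence holds; the paper's avoids conditional distributions and randomization machinery entirely, working only with the already-defined objects $F^{(t,x)}$ and standard facts about f.d.d.s, at the cost of an abstract existence argument.
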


\begin{postponedproof}
    Consider a probability space housing independent copies of
    $\Fam{X_t}_{t \in \Z}$ and $\G$. Then consider
    for each $t \in \Z$ the state path in $\G$ started at $X_t$.
    The distributions of these state paths
    determine a consistent set of finite dimensional distributions for
    the desired pair of processes
    $(\Fam{X'_t}_{t \in \Z}, \Fam{\xi'_t}_{t \in \Z})$.
    By the Kolmogorov extension theorem, the result follows.
    The full proof of~\Cref{thm:embed-into-general-meff} is given in the
	appendix.
\end{postponedproof}

\subsection{Basic Properties}

Plainly, $\G$ is acyclic as an undirected graph because all outgoing edges point forward one unit in time and each vertex has only one outgoing edge.
When $\G$ is a.s.\ connected, it is called a \defn{Doeblin Eternal Family Tree} or a \defn{\deft{}} for short.
More generally, $\G$ may have up to countably many components and is referred to as a \defn{Doeblin Eternal Family Forest} or \defn{\deff{}}.
The \eft{} and \eff{} terminology is inspired by~\cite{baccellieternal} and the word eternal refers to the fact that every vertex of $\G$ has a unique outgoing edge.
That is, there is no individual that is an ancestor of all other individuals.
An \eff{} is a more general object than an \eft{}, i.e.\ an \eff{} may also be an \eft{}.

If the driving sequence $\xi$ is i.i.d., so that the state paths $F^{(t,x)}$ for each $(t,x) \in \Z \times S$ are Markov chains,
then say that $\G$ is \defn{Markovian}.
If $\xi$ is such that for each $t \in \Z$, $\Fam{\fnext(t,x)}_{x \in S}$ is an independent family,
then $\G$ is said to have \defn{vertical independence}.
If $\G$ is Markovian and has vertical independence, then say that $\G$ has \defn{fully independent transitions}.

Some later results are only valid for \efts{}, so the following result
gives an easy case when $\G$ can be shown to be connected.

\begin{proposition}\label{prop:teaser-indep-transitions-components}
    Suppose $\G$ has fully independent transitions, and $P$ is irreducible and positive recurrent with period $d$.
    Then a.s.\ $\G$ has $d$ components. 
    In particular, if $P$ is irreducible, aperiodic, and positive recurrent, then $\G$ is an \eft{}.
\end{proposition}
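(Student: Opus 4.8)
The plan is to identify the connected components of $\G$ by means of a conserved ``phase'' and then to show that within each phase all forward paths almost surely coalesce. First I would invoke the cyclic decomposition of the irreducible period-$d$ matrix $P$: write $S = S_0 \sqcup \cdots \sqcup S_{d-1}$ for the cyclic classes, so that $\hgen(x,\xi_t) \in S_{(i+1)\bmod d}$ almost surely whenever $x \in S_i$. Setting $\phi(t,x) := (i - t)\bmod d$ for $x \in S_i$, a one-line computation gives $\phi(\fnext(t,x)) = \phi(t,x)$, so $\phi$ is constant along every edge and hence on every component of $\G$. Since at any fixed time all $d$ residues are attained, each of the $d$ level sets $\phi^{-1}(j)$ is a nonempty union of components, which already yields at least $d$ components. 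It then remains to show that each level set is almost surely connected; this forces exactly $d$ components and, when $d=1$, connectedness of all of $\G$, i.e.\ that $\G$ is an \eft{}.

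Because $\G$ is a forest whose edges point forward in time, two vertices lie in the same component exactly when their forward state paths $F^{(\cdot)}$ eventually meet, and once two such paths occupy the same state at the same time they agree forever. Connectedness of a level set is therefore equivalent to the statement that every pair of equal-phase vertices has coalescing forward paths. As there are only countably many such pairs, it suffices to prove that each single pair coalesces almost surely and then intersect over all pairs. Following the earlier of the two starting vertices forward to the common later time (phase is preserved, so the two states then lie in a common cyclic class $S_i$), I would reduce to the case of two paths $F^{(t,x)}$ and $F^{(t,y)}$ started at the same time $t$ with $x,y \in S_i$.

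The heart of the argument is that, up to the coalescence time, the pair $\Fam{F^{(t,x)}_u, F^{(t,y)}_u}_{u \geq t}$ evolves as two \emph{independent} copies of the chain. This is precisely where fully independent transitions enter: vertical independence makes $\hgen(a,\xi_u)$ and $\hgen(b,\xi_u)$ independent whenever $a \neq b$, while the i.i.d.\ property of $\xi$ gives independence across times, so while the two paths sit at distinct states their next transitions are independent draws from $P$. Hence, before coalescing, the pair is distributed as the product chain $P \otimes P$, and coalescence is the first hitting time of the diagonal. Observing along the times $t, t+d, t+2d, \dots$ turns this into the product of the chain $P^d$ restricted to $S_i$ with itself; since $P^d|_{S_i}$ is irreducible, aperiodic, and positive recurrent, this product is irreducible and positive recurrent, hence recurrent, and therefore hits the diagonal almost surely. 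This gives coalescence of the pair and completes the chain of reductions.

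I expect the main obstacle to be the independence bookkeeping in this last step: one must verify carefully that vertical independence at each fixed time, combined with independence across times, really does make the two forward paths behave as the independent product chain up to coalescence --- in particular that conditioning on the state reached by following the earlier vertex forward does not disturb the independence of the subsequent transitions --- and that it is positive recurrence of $P$ (rather than mere recurrence) that guarantees recurrence of the product chain on $S_i \times S_i$, so that the diagonal is reached almost surely.
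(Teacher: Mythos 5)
Your proposal is correct and takes essentially the same approach as the paper: your conserved phase $\phi$ is exactly the paper's ``time-zero class,'' and the paper likewise proves coalescence of same-phase vertices by noting that, strictly before hitting the diagonal, the pair of state paths is distributed as the product chain of $P^{d}$ restricted to a cyclic class, which is irreducible, aperiodic, and positive recurrent, hence a.s.\ hits the diagonal. The only (immaterial) difference is ordering: you compare to the independent product chain first and then subsample every $d$ steps, while the paper subsamples first and then compares.
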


\begin{postponedproof}
    The case of a general $d$ is reduced to $d=1$ by
    viewing the chain only every $d$ steps and with state space restricted to 
    one of the $d$ classes appearing in a cyclic decomposition of the state space.
    Consider the state paths in $\G$ started at $(0,x)$ and $(0,y)$ for any two $x,y$.
    Strictly before hitting the diagonal, 
    the pair of state paths has the same distribution
    as a product chain, i.e.\ two independent copies of the chain with one started at $x$ and the other at $y$.
    The product chain is irreducible, aperiodic, and positive recurrent, and
    therefore a.s.\ hits the diagonal, showing the state paths
    started at $(0,x)$ and $(0,y)$ eventually merge.
    The full proof of~\Cref{prop:teaser-indep-transitions-components} is given in the
	appendix.
\end{postponedproof}

A $\xi$-measurable subgraph $\Gr = \Gr(\Fam{\xi_{t}}_{t \in \Z})$ of $\G$ is called \defn{shift-covariant} if, for all $s \in \Z$,
$\Gr(\Fam{\xi_{t+s}}_{t\in \Z})$ is a.s.\ the time-translation of $\Gr$ by $-s$.
Say a state path $\Fam{X_t}_{t \in \Z}$ is \defn{shift-covariant} if the corresponding path in $\G$ is shift-covariant.
In other words, if the driving sequence $\xi$ is translated by some amount $s$ in time, then shift-covariant
objects are also translated in time by the same amount.
Let $E \in \F$ be $\xi$-measurable, 
say $1_E = g(\Fam{\xi_{t}}_{t\in \Z})$.
Say that $E$ is \defn{shift-invariant} if $g(\Fam{\xi_{t}}_{t\in \Z})= g(\Fam{\xi_{t+1}}_{t\in \Z})$ a.s.
That is, shift-invariant events are those events whose occurence is unaffected by time translations of the driving sequence $\xi$.
One has that $\P(E) \in \set{0,1}$ for all shift-invariant events $E$ due to the ergodicity of $\xi$.
All of the following are shift-invariant and hence happen with probability zero or one:
$\G$ is locally finite,
$\G$ contains no cycles,
$\G$ is connected,
$\G$ has exactly $n \in \N \cup\set{\infty}$ components,
$\G$ contains exactly $n \in \N \cup\set{\infty}$ bi-infinite paths.
Generally it will be obvious whether an event is shift-invariant.

When $\G$ is a Markovian, one needs to be cautious that not all state paths in $\G$ are Markov chains with transition matrix $P$.

\begin{example}\label{ex:nonproperstatepath}
    Let $S:=\Z$ and suppose $\G$ has fully independent transitions with $p_{x,x-1} = p_{x,x} = p_{x,x+1} = \frac{1}{3}$ for all $x \in S$.
    Choose $X_0$ to be the smallest element of $\Z$ (in some well-ordering of $\Z$) 
    such that
    $F^{(0,X_0)}_1 = F^{(0,X_0)}_2$.
    In this case, a.s.\ $X_1 = X_2$, so $\Fam{X_t}_{t \in \N}$ is not even Markovian.
\end{example}

The problem with the path in the previous example is that it looks into the future.
Namely, the value of $X_0$ depends on information at time $1$ and time $2$.
To exclude state paths like those in \Cref{ex:nonproperstatepath},
the notion of properness is introduced.
For a nonempty interval $I$ of $\Z$, if for each $t \in I$, $X_t$ is independent of $\Fam{\xi_{s}}_{s \geq t}$,
then $\Fam{X_t}_{t \in I}$ is called a \defn{proper} state path.
In the Markovian case, if $I$ has a minimum element $t_0$, then to show that a
state path $\Fam{X_t}_{t \in I}$ is proper it is sufficient that $X_{t_0}$ is independent of 
$\Fam{\xi_{s}}_{s \geq t_0}$ because for any $s \in \N$, $X_{t_0+s}$ is measurable with respect to the $\sigma$-algebra
generated by $X_{t_0}$ and $\xi_{t_0},\ldots,\xi_{t_0 + s-1}$.
Unlike general state paths in $\G$, proper state paths inherit a Markov transition structure.

\begin{lemma}\label{lem:proper-state-path-is-markov-chain}
    Suppose $\G$ is Markovian.
    If $\Fam{X_t}_{t \in I}$ is a proper state path in $\G$ over a nonempty interval $I\subseteq \Z$,
    then $\Fam{X_t}_{t \in I}$ is a Markov chain with transition matrix $P$.
\end{lemma}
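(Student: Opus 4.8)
The plan is to verify the one-step Markov property directly. Fixing $t$ with $t,t+1 \in I$ and $y \in S$, and writing $\mathcal{G}_t := \sigma(X_s : s \in I,\ s \leq t)$, I would aim to show
\[
    \P\left(X_{t+1} = y \given \sigma(X_s : s \in I,\ s \leq t)\right) = p_{X_t, y}
\]
almost surely, which is precisely the assertion that $\Fam{X_t}_{t \in I}$ is a Markov chain with transition matrix $P$. The entire argument reduces to the single independence claim that $\mathcal{G}_t$ is independent of $\xi_t$. Granting this, the conclusion follows from the usual freezing principle: since $X_{t+1} = \hgen(X_t, \xi_t)$ with $X_t$ being $\mathcal{G}_t$-measurable and $\xi_t$ independent of $\mathcal{G}_t$, one has $\P(\hgen(X_t,\xi_t) = y \mid \mathcal{G}_t) = \psi(X_t)$ with $\psi(x) := \P(\hgen(x,\xi_t) = y)$. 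Here is where Markovianity of $\G$ enters: $\xi$ is i.i.d., so $\xi_t \eqD \xi_0$ and hence $\psi(x) = \P(\hgen(x,\xi_0) = y) = p_{x,y}$, yielding the displayed identity.

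To establish the independence claim I would use the functional description already noted in the remark preceding the lemma, namely that $X_{r+1}$ is a measurable function of $X_r$ and $\xi_r$. Because $I$ is an interval of $\Z$, for any finite set of times $s_1 < \cdots < s_m \leq t$ lying in $I$, iterating the defining recurrence $X_{r+1} = \hgen(X_r, \xi_r)$ shows each $X_{s_i}$ is a deterministic measurable function of $X_{s_1}$ together with $\xi_{s_1}, \ldots, \xi_{s_i - 1}$; in particular all of $X_{s_1}, \ldots, X_{s_m}$ are functions of the single block $(X_{s_1}, \xi_{s_1}, \ldots, \xi_{t-1})$. Properness supplies that $X_{s_1}$ is independent of $\Fam{\xi_s}_{s \geq s_1}$, and combining this with the i.i.d.\ structure makes the family $X_{s_1}, \xi_{s_1}, \xi_{s_1+1}, \ldots$ mutually independent. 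By the grouping property of mutually independent families, $\sigma(X_{s_1}, \xi_{s_1}, \ldots, \xi_{t-1})$ is then independent of $\xi_t$, whence $(X_{s_1}, \ldots, X_{s_m}) \perp \xi_t$. Since the events determined by finitely many of the $X_s$ with $s \leq t$ form a $\pi$-system generating $\mathcal{G}_t$, a routine $\pi$-system/monotone-class argument upgrades this to $\mathcal{G}_t \perp \xi_t$, completing the claim.

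The one genuinely delicate point, and the step I expect to be the main obstacle, is handling intervals $I$ with no minimum element (for instance $I = \Z$ or a left-infinite ray), where one cannot simply condition on a fixed initial coordinate $X_{t_0}$ and read off the measurability structure in one shot. The finite-dimensional reduction above is designed precisely to sidestep this: every finite collection of past coordinates has a smallest index $s_1$ that locally plays the role of an initial time, and the i.i.d.\ property of $\xi$ lets me detach $\xi_t$ from everything indexed by $s_1, \ldots, t-1$. It is worth keeping in mind that two distinct consequences of i.i.d.-ness are used: that $\xi_t$ is independent of the earlier $\xi_s$, which drives the independence claim, and that $\xi_t \eqD \xi_0$, which guarantees the transition law is literally $P$ rather than some time-dependent kernel. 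The remaining ingredients—the functional representation of the $X_{s_i}$ and the $\pi$-system upgrade—are standard and require no further care.
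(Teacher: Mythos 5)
Your proposal is correct and takes essentially the same route as the paper: the paper likewise reduces everything to showing that each past cylinder event $\set{X_t = x_t,\ldots,X_{t-k}=x_{t-k}}$ is independent of $\Fam{\xi_s}_{s\geq t}$, by writing it as a function of $X_{t-k}$ and $\xi_{t-k},\ldots,\xi_{t-1}$ and invoking properness at the earliest time $t-k$ together with the i.i.d.\ structure, exactly as you do with $s_1$ in the role of $t-k$. The only differences are cosmetic: the paper works with consecutive-time cylinders and an explicit computation of $\E[1_{\set{X_{t+1}=x}}1_E]$ rather than your arbitrary finite marginals plus the freezing lemma, and the concern you flag about intervals with no minimum is handled there the same way, since every conditioning window is finite and has a smallest element.
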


\begin{proof}
    Fix $t < \sup I$.
    Let  $E:=\set{X_{t}=x_t,\ldots, X_{t-k}=x_{t-k}}$ be given with $k \in \N$ such that $t-k \geq \inf I$, and 
    $x_t,\ldots,x_{t-k} \in S$.
    Note that whether $E$ occurs is a function of $X_{t-k}$ and $\xi_{t-k},\ldots, \xi_{t-1}$, 
    so the fact that $X_{t-k}$ is independent of $\Fam{\xi_s}_{s \geq t-k}$ and the fact that $\xi$ is 
    i.i.d.\ imply that $E$ is independent of $\Fam{\xi_{s}}_{s \geq t}$. 
    Then for any $x \in S$,
    \begin{align*}
        \E[1_{\set{X_{t+1}=x}} 1_{E}]
        &=\E[1_{\set{h(x_t, \xi_{t})=x}} 1_{E}]\\
        &=\P(h(x_t, \xi_{t})=x) \P(E)\\
        &= p_{x_t,x} \P(E)\\
        &= \E[p_{X_t,x} 1_E].
    \end{align*}
    Since $E$ was an arbitrary cylinder set, it follows that for all $x\in S$,
    \[ 
        \P(X_{t+1} = x \mid \Fam{X_s}_{s \in I, s\leq t}) = p_{X_{t},x}.
    \]
    Thus $\Fam{X_s}_{s \in I}$ is a Markov chain with transition matrix $P$.
\end{proof}

\subsection{Connections with CFTP}

Consider the following structural result that will be expanded upon
in \Cref{sec:bi-recurrence}.
It is a special case of \Cref{prop:existence-of-bi-rec-paths} and \Cref{cor:ergodic-meft-unique-bi-recurrent}, which
will be proved later.

\begin{proposition}\label{prop:mainthm-teaser}
    Suppose $\G$ is Markovian, and 
    that $P$ is irreducible, aperiodic, and positive recurrent.
    Then a.s.\ in every component of $\G$ there exists a unique bi-infinite path that visits every state in $S$ infinitely often
    in the past.
    All other bi-infinite paths in $\G$ do not visit any state infinitely often in the past.
    If $\G$ is an \eft{}, then with $\beta_t$ denoting the state at time $t$ of the unique bi-infinite path visiting every state infinitely often in the past,
    one has that $\Fam{\beta_t}_{t \in \Z}$ is a stationary Markov chain with transition matrix $P$, so that $\beta_t \sim \pi$ for all $t \in \Z$, where $\pi$
    is the invariant distribution for $P$.
\end{proposition}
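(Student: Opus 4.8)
My plan is to produce the bi-recurrent path as the trace of the stationary chain, prove it is the only bi-infinite path that recurs, and then read off its law. For existence I would invoke \Cref{thm:embed-into-general-meff} to realize the stationary Markov chain with transition matrix $P$ and one-dimensional marginal $\pi$ (positive recurrence and irreducibility give a unique invariant law with $\pi(x)>0$ for all $x$) as a proper state path $\Fam{\beta_t}_{t\in\Z}$ in $\G$. Being stationary and ergodic, $\Fam{\beta_t}_{t\in\Z}$ has, by Birkhoff's ergodic theorem, empirical visit-frequency to any state $x$ converging to $\pi(x)>0$ in both time directions, so $\beta$ a.s.\ visits every state infinitely often in the past and the future. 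Since ``there exists a bi-recurrent path'' is a shift-invariant event determined by $\xi$, it has probability $0$ or $1$, and the embedding forces it to be $1$; existence inside \emph{every} component I would then obtain by the same reasoning applied componentwise (this is exactly the content of the later \Cref{prop:existence-of-bi-rec-paths}).

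For uniqueness, and equivalently for the statement that every other bi-infinite path visits no state infinitely often in the past (its contrapositive), the engine is coalescence. Using the technique behind \Cref{prop:teaser-indep-transitions-components}---comparing two state paths occupying the same time level to a product chain and exploiting positive recurrence and aperiodicity---any two state paths started at the same time a.s.\ agree from some finite time on; conditioning on the (random) starting states keeps this an a.s.\ statement. Now if a bi-infinite path $\gamma$ visits some state $b$ infinitely often in the past, say $\gamma_{t_k}=b$ with $t_k\to-\infty$, I would pick $s\le t_k$ with $\beta_s=b$ (possible since $\beta$ also visits $b$ i.o.\ in the past), so that $\beta$ coincides with $F^{(s,b)}$ from time $s$ onward; then $\beta$ and $\gamma$ sit at the same time level at $t_k$, and coalescence forces them to agree from some finite time on. Thus every recurrent bi-infinite path \emph{merges} with $\beta$.

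The crux is upgrading ``merges with $\beta$'' to ``equals $\beta$''. Two distinct bi-infinite paths that merge agree after a last disagreement time $T$ but split into two distinct backward rays before $T$, both of which would then visit $b$ infinitely often in the past. I would rule this out by a mass-transport argument: recurrent backward rays are scarce, each occupying the column $V^b(\G)$ with asymptotic density $\pi(b)$, whereas two disjoint recurrent backward rays feeding a common vertex overcharge that column; this is precisely where the unimodular structure of the bridge graph, and hence \Cref{cor:ergodic-meft-unique-bi-recurrent}, does the real work. I expect this step to be the main obstacle, compounded by the fact that for a general Markovian $\G$ without vertical independence even the coalescence of two coupled chains can fail (consider injective-in-$\xi_t$ generators), so the merging argument of the previous paragraph must ultimately be replaced by the unimodularity machinery rather than applied verbatim.

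Finally, in the \eft{} case I would identify the law of $\beta$. Uniqueness makes $\beta$ a canonical function of $\xi$, hence shift-covariant, so ergodicity of $\xi$ renders $\Fam{\beta_t}_{t\in\Z}$ stationary. Moreover, by uniqueness $\beta_t$ is the unique state $x$ for which $(t,x)$ admits a backward ray visiting every state infinitely often in the past, and this is a function of $\Fam{\xi_s}_{s<t}$ alone; since $\xi$ is i.i.d., $\beta_t$ is independent of $\Fam{\xi_s}_{s\ge t}$, i.e.\ $\beta$ is a proper state path. \Cref{lem:proper-state-path-is-markov-chain} then yields that $\Fam{\beta_t}_{t\in\Z}$ is a Markov chain with transition matrix $P$, and stationarity together with positive recurrence pins its marginal to the unique invariant law $\pi$.
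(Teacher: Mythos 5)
Your overall skeleton ends up coinciding with the paper's: the paper's proof of this proposition is literally a deferral to \Cref{prop:existence-of-bi-rec-paths} and \Cref{cor:ergodic-meft-unique-bi-recurrent}, and your final paragraph (shift-covariance gives stationarity, measurability of $\beta_t$ with respect to $\sigma(\xi_s : s<t)$ gives properness, \Cref{lem:proper-state-path-is-markov-chain} gives the Markov property, and stationarity plus positive recurrence pins the marginal to $\pi$) is exactly the route the paper takes through that corollary. The problem is that every step you supply \emph{in place of} those two results has a genuine gap, so the proposal does not stand as a proof. Your existence-via-embedding argument is broadly sound (it is the idea behind \Cref{prop:stationary-implies-birec}), though transferring ``there exists a bi-recurrent path'' from $\G'$ to $\G$ quietly assumes this existence event is a measurable function of $\xi$, which is a projection over uncountably many candidate paths; the paper sidesteps this by exhibiting $\beta_t$ as the explicit limit $\lim_{s\to-\infty}F^{(s,x^*)}_t$. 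More seriously, ``the same reasoning applied componentwise'' for existence in \emph{every} component is not an argument: the components are random and you cannot embed a stationary chain into each of them separately; you simply cite \Cref{prop:existence-of-bi-rec-paths} here.

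The concrete failure is the coalescence step at the heart of your uniqueness argument. The product-chain technique of \Cref{prop:teaser-indep-transitions-components} requires \emph{fully independent transitions}, i.e., vertical independence on top of the i.i.d.\ driving sequence, whereas the present proposition assumes only that $\G$ is Markovian. Without vertical independence, two paths driven by the same $\xi$ need never merge: take $S=\Z/n\Z$ with $\hgen(x,\xi_t)=x+\xi_t$ and i.i.d.\ increments taking values $0,1$ with probabilities in $(0,1)$; then $P$ is irreducible, aperiodic and positive recurrent, yet distinct paths stay parallel forever (the proposition still holds because each of the $n$ components is itself a single bi-recurrent bi-infinite path, which also shows the uniqueness mechanism cannot be merging). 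You acknowledge this failure, but your fallback, the ``density overcharge'' mass-transport heuristic, is never executed, and it is not how the paper closes the gap. The paper's mechanism is structural: any path bi-recurrent for $x^*$ lies entirely in $\B$ (each of its vertices equals $F^{(s,x^*)}_t$ for some earlier visit $s$ to $x^*$); $\B$ has finite levels by \Cref{prop:mbeff-is-locally-finite} and is unimodularizable by \Cref{thm:main-thm-mbeft-is-unimodularizable}, so by \Cref{thm:foil-classification-theorem} its components are of class I/F and contain a \emph{unique} bi-infinite path each, with no coalescence argument anywhere; bi-recurrence of that path then comes from \Cref{lem:no-infinite-finite-inclusion}. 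So the honest summary is: you correctly locate where the difficulty sits, but the steps you supply would fail under the stated hypotheses, and what you fall back on is precisely the later machinery whose proof constitutes the paper's actual argument.
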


The main result of the original Propp and Wilson paper can be translated into
the language of \deffs{} and summarized as follows.
The reader is encouraged to ponder what it says about the structure of $\G$, and in doing so
one sees that is has much the same spirit as~\Cref{prop:mainthm-teaser}.

\begin{proposition}[Perfect Sampling~\cite{propp1996exact}]\label{prop:PW-CFTP}
    If $S$ is finite and $\G$ is Markovian and an \eft{} (which, since $S$ is
    finite, necessitates that $P$ is irreducible and aperiodic),
    then there is an a.s.\ finite time $\tau$ such that all paths in $\G$ started at any time $t \leq -\tau$
    have merged by time $0$, all reaching a common vertex $(0, \beta_0)$.
    Moreover, $\beta_0 \sim \pi$, where $\pi$ is the stationary distribution of $P$, and there is an algorithm $A$
    that a.s.\ terminates in finite time returning $\beta_0$.
\end{proposition}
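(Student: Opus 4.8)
The plan is to run the classical coupling-from-the-past argument inside $\G$, using connectedness to force backward coalescence and \Cref{lem:proper-state-path-is-markov-chain} to pin down the limiting law. For $n\geq 0$ I would set $\Phi_n(x) := F^{(-n,x)}_0$, the state reached at time $0$ by the path started at time $-n$ in state $x$, and write $C_n := \set{F^{(-n,x)}_0 : x\in S}$ for its image. The first step is the nesting $C_{n+1}\subseteq C_n$: a path started at time $-(n+1)$ in state $x$ sits one step later at time $-n$ in state $\hgen(x,\xi_{-(n+1)})$, so $F^{(-(n+1),x)}_0 = \Phi_n(\hgen(x,\xi_{-(n+1)}))$ and hence $C_{n+1}\subseteq C_n$. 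Thus $|C_n|$ is non-increasing and the events $\set{|C_n|=1}$ increase in $n$.

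The crux is to show $|C_n|=1$ eventually, almost surely. Since $\G$ is an \eft{} it is a.s.\ connected; because every vertex has a single outgoing edge pointing forward in time, two vertices share a component exactly when their forward trajectories eventually coincide. In particular, for each ordered pair $(x,y)\in S^2$ the forward paths $F^{(0,x)}$ and $F^{(0,y)}$ coalesce at an a.s.\ finite time $\sigma_{x,y}\geq 0$. I would then invoke stationarity of $\xi$: the coalescence time of $F^{(-n,x)}$ and $F^{(-n,y)}$ measured from $-n$ has the law of $\sigma_{x,y}$, so the probability that these two paths have merged by time $0$ equals $\P(\sigma_{x,y}\leq n)\to 1$. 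Finiteness of $S$ then lets a union bound over the finitely many pairs give $\P(|C_n|=1)\to 1$, and combining this with the monotonicity above shows that $\tau := \inf\set{n\geq 0 : |C_n|=1}$ is a.s.\ finite. By the nesting, for every $t\leq -\tau$ all paths started at time $t$ have merged by time $0$ at a common vertex $(0,\beta_0)$, where $\beta_0$ is the unique element of $C_\tau$.

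To identify the law of $\beta_0$, note that connectedness with $S$ finite forces $P$ to be irreducible and aperiodic, so $P$ has a unique stationary distribution $\pi$. On an enlargement of the space I would take $Y\sim\pi$ independent of $\xi$. For each fixed $n$ the path $F^{(-n,Y)}$ is proper, so \Cref{lem:proper-state-path-is-markov-chain} makes it a Markov chain with matrix $P$ started from $\pi$, giving $F^{(-n,Y)}_0\sim\pi$ for all $n$. On the other hand, once $n\geq\tau$ every starting state at time $-n$ lands at $\beta_0$ at time $0$, so $F^{(-n,Y)}_0 = \beta_0$; hence $F^{(-n,Y)}_0\to\beta_0$ almost surely. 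Testing against an arbitrary bounded function and passing to the limit yields $\beta_0\sim\pi$.

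Finally, the algorithm $A$ is CFTP itself: sample $\xi_{-1}$ and compute $C_1$; if $|C_1|=1$ return its element, otherwise sample one further variable into the past, \emph{reusing} the variables already drawn, recompute, and repeat. By construction $A$ halts precisely at step $\tau$ and returns $\beta_0$, so a.s.\ finiteness of $\tau$ gives a.s.\ termination. The main obstacle is exactly that a.s.\ finiteness: connectedness alone only guarantees that each pair of forward paths coalesces at some a.s.\ finite time, which could a priori lie beyond $0$. The two devices that close the gap are stationarity---which trades ``coalescence by the fixed time $0$ starting from $-n$'' for ``coalescence within time $n$ starting from $0$''---and the monotonicity $C_{n+1}\subseteq C_n$, which upgrades the convergence $\P(|C_n|=1)\to 1$ to almost sure eventual coalescence. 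As a sanity check, $\beta_0$ is the time-$0$ value of the bi-recurrent path of \Cref{prop:mainthm-teaser}, giving an alternative derivation of $\beta_0\sim\pi$.
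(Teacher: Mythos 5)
Your proof is correct. Note that the paper does not actually prove \Cref{prop:PW-CFTP} --- it is stated as a translation of the classical Propp--Wilson result and attributed to~\cite{propp1996exact} --- so what you have produced is a self-contained reconstruction of the classical CFTP argument inside the paper's framework, and all the steps check out: the composition identity $F^{(-(n+1),x)}_0 = \Phi_n(\hgen(x,\xi_{-(n+1)}))$ gives the nesting $C_{n+1}\subseteq C_n$; connectivity of $\G$ plus the fact that every vertex has a unique outgoing edge pointing forward in time does imply that any two vertices in a common component have forward orbits that meet (any simple undirected path must go up to an apex and back down), so pairwise coalescence times are a.s.\ finite; stationarity of $\xi$ converts this into $\P(\lvert C_n\rvert =1)\to 1$; and the nesting upgrades that to a.s.\ finiteness of $\tau$. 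Two choices are worth flagging as genuinely different from the classical route. First, where Propp--Wilson must \emph{prove} backward coalescence from irreducibility, aperiodicity, and finiteness of $S$ (via a positive-probability collapse event in a fixed window repeated independently --- this is exactly the mechanism the paper uses later in \Cref{prop:unif-ergodicity-no-spurious-paths}), you instead take coalescence as essentially given by the hypothesis that $\G$ is an \eft{}, which makes your argument softer: you never need independence of the driving sequence across disjoint time windows, only stationarity plus the nesting. This is a legitimate use of the stated hypothesis, though it means your proof does not explain \emph{why} the EFT hypothesis is consistent (that implication is \Cref{prop:teaser-indep-transitions-components}, which the proposition's parenthetical alludes to in reverse). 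Second, your identification $\beta_0\sim\pi$ via a proper state path started from an independent $Y\sim\pi$ and \Cref{lem:proper-state-path-is-markov-chain} is a clean way to phrase the classical "stationary chain started at $-n$" argument using the paper's own machinery, and the appeal to dominated convergence is exactly right since $F^{(-n,Y)}_0 = \beta_0$ for all $n \geq \tau$.
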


\begin{remark}
    In fact, the $\beta_0$ appearing in \Cref{prop:PW-CFTP}
    and the $\beta_0$ appearing in \Cref{prop:mainthm-teaser} are the same.
    That is, the perfect sampling algorithm $A$ is ultimately computing the
    point in $\G$ on the unique bi-infinite path and returning its state.
    This can be seen by the fact that, since all paths started at time $-\tau$
    reach the common vertex $(0,\beta_0)$, any bi-infinite path in $\G$
    must also pass through $(0,\beta_0)$.
    However, what is notably absent in \Cref{prop:mainthm-teaser} is any mention of an algorithm to compute $\beta_0$.
    Whether such an algorithm exists in general is not studied in the present research.
\end{remark}

\subsection{Bridge Graphs}\label{sec:MBEFFs}
The primary tool used in this document will be the theory of unimodular networks in the sense of~\cite{aldous2007processes}.
Local finiteness is essential in the theory of unimodular networks, 
but the Doeblin graph $\G$ may not be locally finite,
as the following result shows.

\begin{proposition}
    If $\sum_{x\in S} p_{x,y} < \infty$ for all $y \in S$, then $\G$ is a.s.\ locally finite.
    If $\G$ has fully independent transitions and for some $y\in S$, $\sum_{x \in S} p_{x,y} = \infty$,
    then $\G$ is a.s.\ not locally finite.
\end{proposition}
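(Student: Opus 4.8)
The plan is to reduce local finiteness to a condition on in-degrees. Every vertex $(t,x)$ of $\G$ has exactly one outgoing edge, to $\fnext(t,x)$, so $\G$ is locally finite if and only if every vertex has finite in-degree. Since every edge steps forward exactly one unit of time, all edges into $(t+1,y)$ originate at time $t$, and the in-degree of $(t+1,y)$ is the number of $x \in S$ with $\fnext(t,x) = (t+1,y)$, namely
\[
    N_{t+1,y} := \sum_{x \in S} 1_{\set{\hgen(x,\xi_t) = y}}.
\]
The one preliminary observation I would record is that, because $\xi$ is stationary, $\xi_t$ has the same law as $\xi_0$, and hence $\P(\hgen(x,\xi_t) = y) = p_{x,y}$ for all $t, x, y$.

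For the first assertion, I would apply the first-moment method. By Tonelli's theorem,
\[
    \E[N_{t+1,y}] = \sum_{x \in S} \P(\hgen(x,\xi_t) = y) = \sum_{x \in S} p_{x,y},
\]
which is finite by hypothesis. A nonnegative random variable with finite expectation is a.s.\ finite, so $N_{t+1,y} < \infty$ a.s.\ for each fixed $(t+1,y)$. As $\Z \times S$ is countable, a countable union of null events is null, so a.s.\ every vertex has finite in-degree; together with out-degree one this shows $\G$ is a.s.\ locally finite. This direction uses only the stationarity of $\xi$ and requires no independence.

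For the second assertion, fix a state $y$ with $\sum_{x \in S} p_{x,y} = \infty$ and fix a time, say $t = 0$. The point is that vertical independence makes the family $\Fam{\fnext(0,x)}_{x \in S}$ independent; consequently the indicator events $\set{\hgen(x,\xi_0) = y}$, each a function of the single variable $\fnext(0,x)$, are mutually independent, with probabilities $p_{x,y}$ that sum to infinity. The second Borel--Cantelli lemma then gives that infinitely many of these events occur a.s., i.e.\ $N_{1,y} = \infty$ a.s. Thus $(1,y)$ has infinite in-degree a.s., and $\G$ is a.s.\ not locally finite.

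I do not anticipate a serious obstacle; the two directions are a one-line first-moment estimate and a one-line application of the second Borel--Cantelli lemma. The only steps demanding care are justifying $\P(\hgen(x,\xi_t)=y)=p_{x,y}$ from stationarity, and verifying that in the divergent case it is specifically vertical independence---not merely the Markovian property---that supplies the independence of the indicator events required by Borel--Cantelli.
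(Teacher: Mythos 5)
Your proof is correct and follows essentially the same route as the paper, which also deduces both statements from the Borel--Cantelli lemmas applied to the events $\set{\fnext(t-1,x)=(t,y)}_{x\in S}$ (your first-moment argument is just the standard proof of the first Borel--Cantelli lemma, and your use of vertical independence for the second lemma matches the paper's use of fully independent transitions). The additional care you take in reducing local finiteness to in-degrees and in taking a countable union over vertices is implicit in the paper's proof, so there is no substantive difference.
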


\begin{proof}
    Both statements follow from the Borel-Cantelli lemmas.
    That is, for any fixed $(t,y) \in \Z\times S$,
    if $\sum_{x \in S}p_{x,y} < \infty$, then a.s.\ one has that only finitely many
    of the events $\set{\fnext(t-1,x) = (t,y)}_{x \in S}$ occur, showing $(t,y)$
    has finite in-degree, and hence finite degree, in $\G$.
    On the other hand, if $\G$ has fully independent transitions and
    for some fixed $(t,y) \in \Z \times S$ one has $\sum_{x \in S}p_{x,y} = \infty$,
    then a.s.\ infinitely many of the events $\set{\fnext(t-1,x) = (t,y)}_{x \in S}$ occur,
    so that $(t,y)$ has infinite degree.
\end{proof}

The remedy taken here is to instead concentrate on particular subgraphs of $\G$.
In this section, subgraphs are introduced that are locally finite under a positive recurrence assumption 
and turn out to have nice properties
when considered as random networks.

For each $(t,x) \in \Z\times S$, and each $y \in S$, let
\begin{align}
    \tau^{(t,x)}(y) :=\inf\set{s > t: F^{(t,x)}_s = y}, \qquad \sigma^{(t,x)}(y) := \tau^{(t,x)}(y) - t
\end{align}
be, respectively, the \defn{return time} and \defn{time until return} of
$F^{(t,x)}$ to $y$.
The word return is used even when $y \neq x$, in which case it may be that $F^{(t,x)}$
is not part of a state path that has visited $y$ before time $t$.
Note that the distribution of $\sigma^{(t,x)}(y)$ does not depend on $t$ because $\xi$
is stationary.
Call a state $x \in S$ \defn{positive recurrent} if $\E[\sigma^{(0,x)}(x)] < \infty$ or \defn{recurrent} if $\sigma^{(0,x)}(x) < \infty$ a.s.
In the Markovian case these are the usual definitions.
If a state $x \in S$ is recurrent, then indeed for every $t \in \Z$, $F^{(t,x)}$ visits
$x$ infinitely often. 

For each fixed $x \in S$, consider the subgraph $\B(x)$ of $\G$ of all paths 
starting from state $x$ at any time.
That is, $\B(x)$ is the subgraph of $\G$ with
\begin{align}\label{eq:Bxvertices}
    V(\B(x)) 
    := \bigcup_{t \in \Z}\set{(s,F^{(t,x)}_s) : s \geq t}
    = \bigcup_{t \in \Z}\set{\fnext^n(t,x) : n \geq 0}.
\end{align}
Call $\B(x)$ the \defn{bridge graph for state $x$}
and refer to it as either a \defn{\beff{}} or \defn{\beft{}} depending on whether it is a forest or a tree.
Note that one of these possibilities happens with probability $1$ because the number of components in $\B(x)$
is shift-invariant.

\begin{assumption}
    For the remainder of the document, 
    assume there exists a positive recurrent state $x^* \in S$, which is fixed,
    and the notation $\B := \B(x^*)$ refers to the bridge graph for state $x^*$.
\end{assumption}

An example bridge graph appears in \Cref{fig:meft-and-bridge-example}.
\begin{figure}[t!]
    \centering
    \includegraphics[width=\linewidth]{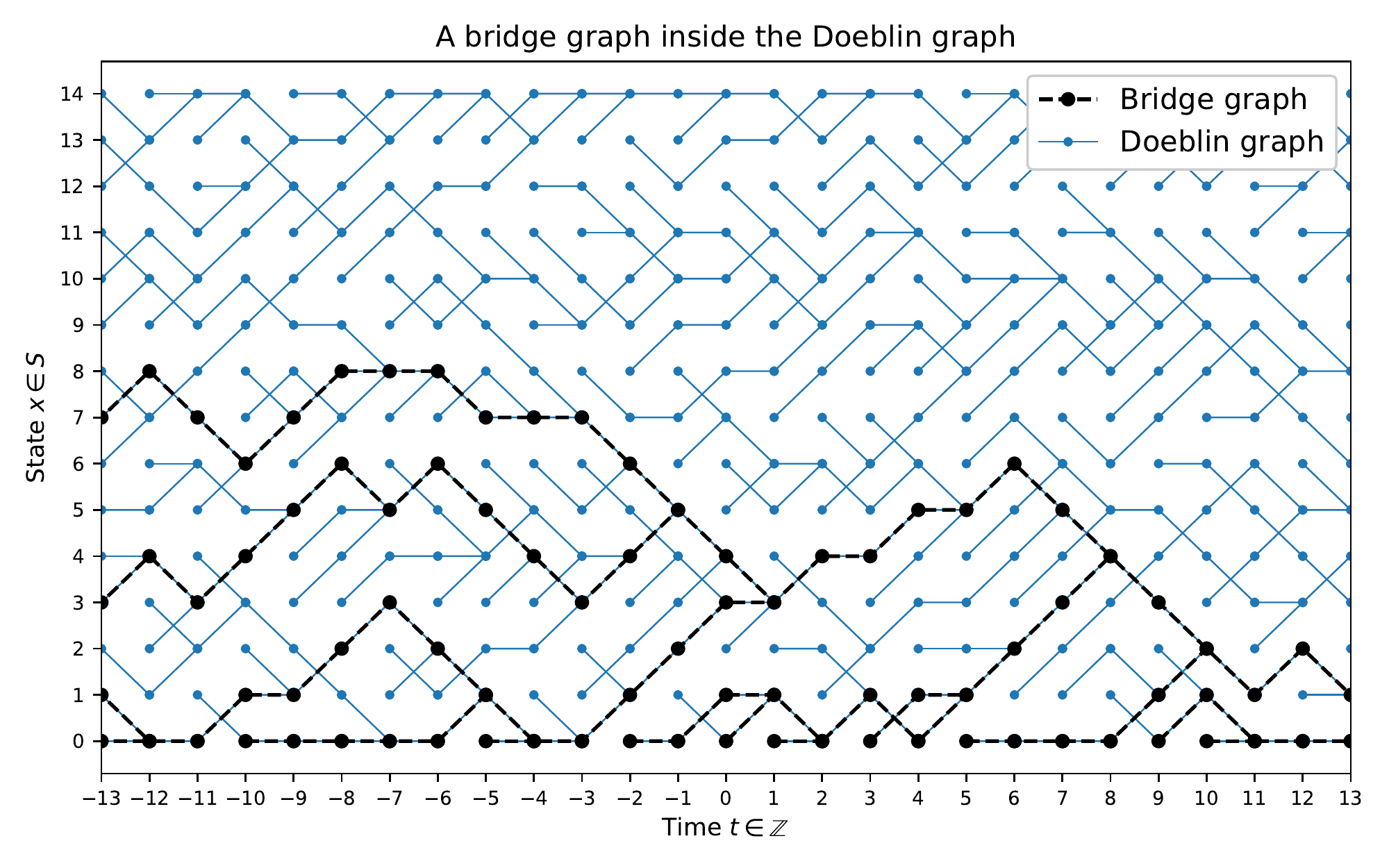}
    \caption{An example bridge graph, in this case for state $x^* = 0$, sitting inside the Doeblin graph.}%
    \label{fig:meft-and-bridge-example}
\end{figure}
Equivalently, $\B$ can be described
in terms of descendants of vertices, viewing directed edges in $\G$ as pointing from
a vertex to its parent.
For each $(t,y) \in \Z \times S$, define the \defn{descendants} of $(t,y)$ 
in $\G$ to be
\begin{align}
    D^{(t,y)} &:= \set{(s,x) \in \Z \times S : F^{(s,x)}_t = y}.
\end{align}
Then $\B$ is also the subgraph of $\G$ with
\begin{align*}
    V(\B) = \set{(t,y) \in \Z \times S : \exists s,\, (s,x^*) \in D^{(t,y)}}.
\end{align*}
That is, $\B$ is the subgraph of $\G$ generated by vertices that have some descendant in state $x^*$.
In particular, recalling~\eqref{defn:subscriptt},

\begin{align}
    y \in \B_t \iff \exists s,\, x^* \in D^{(t,y)}_s,\qquad (t,y) \in \Z \times S.
\end{align}
\Cref{lem:G-eft-implies-B-eft} shows that if $\G$ is a.s.\ connected, then $\B$ is too.

\begin{lemma}\label{lem:G-eft-implies-B-eft}
    If $u,v \in V(\B)$ are in the same component of $\G$, then they are in the same component of $\B$.
    In particular, if $\G$ is an \eft{}, then $\B$ is an \eft{}.
\end{lemma}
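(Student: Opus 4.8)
The plan is to reduce everything to two pathwise observations about the deterministic structure of $\G$ and $\B$: first, that lying in a common component of $\G$ is equivalent to coalescence of forward orbits under $\fnext$; and second, that $V(\B)$ is closed under $\fnext$. Recall that $\G$ is acyclic and that every vertex has a unique outgoing edge, namely to its $\fnext$-image. I claim that $u,v\in V(\G)$ lie in the same component of $\G$ if and only if $\fnext^{m}(u)=\fnext^{n}(v)$ for some $m,n\geq 0$. The ``if'' direction is immediate, since then the forward paths from $u$ and from $v$ reach a common vertex. For ``only if'', I would take the unique simple path $w_0=u,w_1,\dots,w_k=v$ between them in their tree component and track the time coordinates $t_i$ of the $w_i$, which change by exactly $\pm 1$ along each edge. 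An interior strict local minimum of $(t_i)$ would force the corresponding $w_i$ to have two outgoing edges (its two path-neighbors would both sit one time-step later), contradicting that $\fnext$ provides a \emph{unique} outgoing edge; hence $(t_i)$ is unimodal, and its unique peak $w_{i^*}$ satisfies $w_{i^*}=\fnext^{i^*}(u)=\fnext^{\,k-i^*}(v)$, the desired coalescence. This unimodality/peak argument carries the only real content of the proof and is the step I expect to require the most care.

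The second observation is routine: if $(t,y)\in V(\B)$, say $F^{(s,x^*)}_t=y$ with $s\leq t$, then $F^{(s,x^*)}_{t+1}=\hgen(y,\xi_t)$ is precisely the state of $\fnext(t,y)$, so the same path started at $(s,x^*)$ witnesses $\fnext(t,y)\in V(\B)$. Thus the entire forward orbit of any vertex of $\B$, together with the edges joining its consecutive vertices (which lie along the path $F^{(s,x^*)}$ and are therefore edges of $\B$), stays inside $\B$.

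Combining the two observations proves the main statement. Given $u,v\in V(\B)$ in the same component of $\G$, the first observation yields a common forward vertex $w=\fnext^{m}(u)=\fnext^{n}(v)$, and the second shows that the forward path from $u$ to $w$ and the forward path from $v$ to $w$ both lie in $\B$; concatenating them connects $u$ to $v$ within $\B$. For the final assertion, if $\G$ is an \eft{} then a.s.\ $\G$ is connected, so any two vertices of $\B$ lie in the same component of $\G$ and hence, by what was just shown, in the same component of $\B$; thus $\B$ is a.s.\ connected. Since $\B$ is a subgraph of the acyclic $\G$ it is acyclic, and since $V(\B)$ is $\fnext$-closed every vertex of $\B$ retains its unique outgoing edge, so $\B$ is eternal. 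A connected, acyclic, eternal graph is an \eft{}, which finishes the proof.
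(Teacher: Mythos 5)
Your proposal is correct and follows essentially the same route as the paper: both arguments rest on the two facts that forward $\fnext$-orbits of vertices lying in a common component of $\G$ must coalesce, and that such orbits, once inside $\B$, stay inside $\B$ together with their edges. The only difference is presentational — the paper reduces to the base vertices $(s,x^*)$, $(t,x^*)$ and simply asserts that their paths meet, whereas you apply the coalescence claim directly to $u,v$ and actually prove it via the unimodality-of-time-coordinates argument along the tree path, a step the paper leaves implicit.
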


\begin{proof}
    Consider times $s,t \in \Z$.
    Suppose $(s,x^*)$ and $(t,x^*)$ are in the same component of $\G$.
    Then $F^{(s,x^*)}$ and $F^{(t,x^*)}$ meet at some point.
    But, by definition, the paths of $F^{(s,x^*)}$ and $F^{(t,x^*)}$ are included in $\B$.
    Hence $(s,x^*)$ and $(t,x^*)$ are in the same component of $\B$.
    Now if $u,v \in V(\B)$ are in the same component of $\G$,
    $u$ is in the same component in $\G$ as some $(s,x^*)$ and $v$ is in the same component of $\G$ as some $(t, x^*)$,
    and $(s,x^*)$ and $(t, x^*)$ are in the same component of $\B$ by the previous part.
    Hence $u,v$ are in the same component of $\B$.
\end{proof}

The condition that $\B$ is an \eft{} is equivalent to 
strong coupling convergence (defined and studied in~\cite{borovkov1992stochastically, borovkov1994two, foss2003extended})
of $F^{(0,x^*)}$ to a stationary version of the SRS\@.
However, simple conditions for $\B$ to be an \eft{} are not known
outside of the Markovian case, where \Cref{prop:teaser-indep-transitions-components}
showed that if $P$ is irreducible, aperiodic, and positive recurrent,
then $\G$ is an \eft{}.
Another (not neccessarily easy to check) condition for $\B$ to be an \eft{}
will be given in \Cref{cor:B-has-finitely-many-components}.

The main tool used in this paper is unimodularity of random networks.
The first form of unimodularity used is stationarity, i.e., the
unimodularity of the deterministic network 
$\Z$ rooted at $0$ and with neighboring integers connected.
Unimodularity of $\Z$ gives a helpful way to reorganize proofs based on stationarity in terms of transporting mass
between different times.
Recall that a (measurable) group action $\theta:\Z\times \Omega \to \Omega$ of $\Z$ on $\Omega$ is called $\P$-invariant
if $\P(\theta_t \in \cdot) = \P$ for all $t \in \Z$. 
The shift operator on $\Xi ^\Z$ is an example of such an action.

\begin{lemma}[Mass Transport Principle for $\Z$]\label{lem:mtpforZ}
    Suppose $w:\Z\times \Z \to \R_{\geq 0}$ is a random map.
    Also suppose $\theta:\Z \times \Omega \to \Omega$ is a $\P$-invariant $\Z$-action
    on $\Omega$, and that the two are compatible in the sense that $w(s,t)\circ \theta_r = w(s+r,t+r)$ almost surely for each $s,t,r \in \Z$.
    Then with $w^+:= \sum_{t \in \Z} w(0,t)$ and $w^- := \sum_{s \in \Z} w(s,0)$, one has
    \begin{align}
        \E[w^+] = \E[w^-].
    \end{align}
\end{lemma}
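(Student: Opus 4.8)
The plan is to show that $\E[w^+]$ and $\E[w^-]$ both equal the single sum $\sum_{k \in \Z} \E[w(0,k)]$. The entire content lies in one observation: the compatibility hypothesis together with the $\P$-invariance of $\theta$ forces the mean $\E[w(s,t)]$ to depend only on the difference $t-s$. First I would use that $w$ takes values in $\R_{\geq 0}$, so that Tonelli's theorem applies with no integrability assumption and lets me exchange expectation with the (possibly infinite) sums:
\begin{align}
    \E[w^+] = \sum_{t \in \Z} \E[w(0,t)], \qquad \E[w^-] = \sum_{s \in \Z} \E[w(s,0)],
\end{align}
both sides understood as elements of $[0,\infty]$.

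Next I would establish the diagonal-translation invariance of the mean. Fixing $s,t,r \in \Z$, the $\P$-invariance of $\theta_r$ gives $\E[Y \circ \theta_r] = \E[Y]$ for every nonnegative random variable $Y$; applying this to $Y = w(s,t)$ and invoking the compatibility relation $w(s,t)\circ\theta_r = w(s+r,t+r)$ yields
\begin{align}
    \E[w(s+r,t+r)] = \E[w(s,t)\circ\theta_r] = \E[w(s,t)].
\end{align}
Hence $\E[w(s,t)]$ is invariant under $(s,t)\mapsto(s+r,t+r)$ and so depends only on $t-s$; writing $g(k):=\E[w(0,k)]$, this reads $\E[w(s,t)] = g(t-s)$.

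Finally I would reindex. The first sum is $\sum_{t} g(t)$, while in the second $\E[w(s,0)] = g(-s)$, so substituting $k=-s$ gives $\sum_s g(-s) = \sum_k g(k)$. Both sums equal $\sum_{k\in\Z} g(k)$, which proves $\E[w^+]=\E[w^-]$. There is no genuine obstacle; the only care needed is to confirm that the use of Tonelli is unconditional (guaranteed by $w \geq 0$) and that the asserted equality holds in $[0,\infty]$, rather than presupposing finiteness of either side.
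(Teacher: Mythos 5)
Your proposal is correct and follows essentially the same route as the paper's proof: interchange expectation and sum by Tonelli (justified by nonnegativity), use $\P$-invariance of $\theta$ together with the compatibility relation to shift each term onto the diagonal, and reindex. The paper simply compresses this into one chain of equalities, applying $\theta_{-t}$ directly to $w(0,t)$ to obtain $w(-t,0)$, while you first isolate the general fact that $\E[w(s,t)]$ depends only on $t-s$; the content is identical.
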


\begin{proof}
    One calculates
    \[
        \E[w^+] = \sum_{t \in \Z} \E[w(0,t)] 
        = \sum_{t \in \Z} \E[w(0,t) \circ \theta_{-t}] 
        = \sum_{t \in \Z} \E[w(-t,0)]
        = \E[w^-]
    \]
    as desired.
\end{proof}

The mass transport principle for $\Z$ immediately gives the following.

\begin{proposition}\label{prop:mbeff-is-locally-finite}
    For all $t \in \Z$, $\E[\#\B_t] \leq \E[\sigma^{(0,x^*)}(x^*)]$.
    In particular, $\B$ is a.s.\ locally finite, even if $\G$ itself is not.
\end{proposition}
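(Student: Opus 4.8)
The plan is to apply the mass transport principle for $\Z$ (\Cref{lem:mtpforZ}) to a transport that, out of each time $s$, sends one unit of mass to every time during the first excursion of $F^{(s,x^*)}$ away from $x^*$. Concretely, I would set
\[
    w(s,t) := 1\set{s \leq t < \tau^{(s,x^*)}(x^*)},
\]
taking $\theta$ to be the shift of the driving sequence $\xi$. Since shifting $\xi$ translates every path in time, one checks $F^{(s,x^*)}_t \circ \theta_r = F^{(s+r,x^*)}_{t+r}$, whence $\tau^{(s,x^*)}(x^*)\circ\theta_r = \tau^{(s+r,x^*)}(x^*) - r$ and the compatibility hypothesis $w(s,t)\circ\theta_r = w(s+r,t+r)$ holds. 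The outgoing mass is $w^+ = \#\set{t : 0 \leq t < \tau^{(0,x^*)}(x^*)} = \sigma^{(0,x^*)}(x^*)$, so \Cref{lem:mtpforZ} gives $\E[w^-] = \E[\sigma^{(0,x^*)}(x^*)]$, where $w^- = \#\set{s : s \leq 0 < \tau^{(s,x^*)}(x^*)}$. It therefore suffices to show $\#\B_0 \leq w^-$, after which stationarity of $\xi$ (shift-covariance of $\B$) upgrades the bound to all $t \in \Z$.

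The core step is the inequality $\#\B_0 \leq w^-$, which I would obtain by exhibiting a surjection from the index set $\set{s : s \leq 0 < \tau^{(s,x^*)}(x^*)}$ onto $\B_0$, namely $s \mapsto F^{(s,x^*)}_0$. This map lands in $\B_0$ by the very definition~\eqref{eq:Bxvertices} of $V(\B)$. For surjectivity, take $y \in \B_0$; by~\eqref{eq:Bxvertices} there is some $s \leq 0$ with $F^{(s,x^*)}_0 = y$, and I would let $u$ be the last time in $[s,0]$ at which this path visits $x^*$ (well-defined, since the path is at $x^*$ at time $s$). The merging property of the coupling — two state paths that agree at one time agree at all later times, both obeying $X_{\cdot+1} = \hgen(X_\cdot, \xi_\cdot)$ — then gives $F^{(u,x^*)}_0 = F^{(s,x^*)}_0 = y$, while maximality of $u$ ensures the path from $(u,x^*)$ avoids $x^*$ throughout $(u,0]$, i.e.\ $\tau^{(u,x^*)}(x^*) > 0$. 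Thus $u$ lies in the index set and maps to $y$, establishing surjectivity and hence $\#\B_0 \leq w^-$.

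The main obstacle is precisely this surjectivity argument: distinct starting times may contribute the same state at time $0$ once their paths coalesce before either returns to $x^*$, which is exactly why one gets only the inequality $\#\B_0 \leq w^-$ rather than an equality, and why the conclusion is stated as $\leq$. Once the transport bound $\E[\#\B_0] \leq \E[\sigma^{(0,x^*)}(x^*)] < \infty$ is in hand — finiteness coming from positive recurrence of $x^*$ — I would conclude local finiteness by noting that every vertex $(t,y) \in V(\B)$ has out-degree $1$ and in-degree at most $\#\B_{t-1}$; since $\E[\#\B_{t-1}] < \infty$ forces $\#\B_{t-1} < \infty$ almost surely, and intersecting over the countably many $t \in \Z$ preserves the almost sure event, every vertex of $\B$ almost surely has finite degree.
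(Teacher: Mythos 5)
Your proof is correct and follows essentially the same route as the paper: the paper applies \Cref{lem:mtpforZ} with the transport $w(s,t) := 1_{\set{\sigma^{(s,x^*)}(x^*) > t-s > 0}}$, which is your transport minus the diagonal term, yielding $\E[\sigma^{(0,x^*)}(x^*)-1] \geq \E[\#\B_0-1]$ by the same last-visit-to-$x^*$ argument that you spell out in your surjectivity step (the paper leaves it as a one-line remark). The only difference is cosmetic bookkeeping of whether mass is sent from $s$ to itself.
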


\begin{proof}
    Without loss of generality, $\Omega$ is the canonical space $\Xi^{\Z}$,
    with the driving sequence $\Fam{\xi_{t}}_{t \in \Z}$ being coordinate maps.
    Then $\theta:\Z\times \Omega \to \Omega$ defined by 
    $\theta_s(\Fam{\xi_{t}}_{t\in \Z}) :=  \Fam{\xi_{s+t}}_{t\in \Z}$
    is a $\P$-invariant measurable $\Z$-action on $\Omega$.
    Choose the mass transport $w(s,t) := 1_{\set{\sigma^{(s,x^*)}(x^*) > t-s > 0}}$.
    The fact that one has $\sigma^{(s,x^*)}(x^*) \circ \theta_r = \sigma^{(s+r,x^*)}(x^*)$ for all $s,t,r \in \Z$
    implies $w$ is compatible with $\theta$.
    Then $w^+ = \sigma^{(0,x^*)}(x^*) -1$, and $w^- = \#\set{s<0: \sigma^{(s,x^*)}(x^*) > |s|} \geq \#\B_0-1$,
    where this inequality follows from the fact that for every $y \in \B_0 \setminus \set{x^*}$,
    there is $s<0$ such that $\sigma^{(s,x^*)}(x^*) > |s|$ and $F^{(s,x^*)}_0 = y$.
    Thus the mass transport principle for $\Z$ gives $\E[\sigma^{(0,x^*)}(x^*)-1] \geq \E[\#\B_0-1]$, from which the result follows.
\end{proof}

The proof style of \Cref{prop:mbeff-is-locally-finite} may be repeated in many different ways and the boilerplate
setup of the proof can be mostly omitted once one understands the flow of the proof.
The shortened version of the proof of \Cref{prop:mbeff-is-locally-finite} is given to exemplify how much can be omitted
without losing the main idea.

\begin{proof}[Proof (shortened)]
    Let the mass transport $w(s,t)$ send mass $1$ from $s$ to all times $t$ strictly after $s$
    and strictly before $F^{(s,x^*)}$ returns to $x^*$.
    Then $w^+ = \sigma^{(0,x^*)}(x^*)-1$ and $w^- = \#\set{s < 0: \sigma^{(s,x^*)}(x^*) > |s|} \geq \#\B_0 - 1$,
    where this inequality follows from the fact that for every $y \in \B_0 \setminus \set{x^*}$,
    there is $s<0$ such that $\sigma^{(s,x^*)}(x^*) > |s|$ and $F^{(s,x^*)}_0 = y$.
    The mass transport principle finishes the claim.
\end{proof}

One now sees the versatility of using even the simplest form of unimodularity.
A list of mass transports and the results they give, all by following the same proof style,
appears in \Cref{sec:list-of-mass-transports} in the appendix.
Some of the mass transports give new results, and others recover well-known results,
such as fact that $\pi(y)/\pi(x^*)$ is the expected number of visits of a Markov chain started at $x^*$
to $y$ before returning to $x^*$, and $1/\pi(x^*)$ is the expected return time of a Markov chain started at $x^*$
to return to $x^*$, where $\pi$ is the invariant distribution for the Markov chain.
The next section reviews the more general theory of random networks and unimodularity, then shows how to embed
subgraphs of $\G$ as random networks, so that eventually one may find a unimodular structure inside $\G$.
\section{Random Networks}\label{sec:randomnetworks}
\subsection{Definition and Basic Properties}
See~\cite{aldous2007processes,khezeli2017shift} for a more thorough review of random networks than what is provided here.
A \defn{network} is a graph
$\Gr=(V(\Gr),E(\Gr))$ equipped with a complete separable metric
space $(\Xi_\Gr, d_{\Xi_\Gr})$ called the \defn{mark space} and two maps from $V(\Gr)$ and $\set{(v,e): v \in V(\Gr), e \in E(\Gr), v \sim e}$
to $\Xi_\Gr$, where $\sim$ is used for adjacency of vertices or edges.
The image of $v$ (resp.\ $(v,e)$) in $\Xi_\Gr$ is called its \defn{mark},
which is extra information associated to the vertex (resp.\ edge).
The mark of $(v,e)$ may also be thought of as the mark of $e$ considering it to be a directed edge with initial vertex $v$.
The graph distance between $v$ and $w$
is denoted $d_\Gr(v,w)$.
Unless explicitly mentioned otherwise, networks are assumed to be nonempty, locally finite, and connected.

An \defn{isomorphism} between two networks with the same mark space is a graph isomorphism
that also preserves the marks.
A \defn{rooted network} is a pair $(\Gr,o)$ in which $\Gr$ is a network and $o$ is a 
distinguished vertex of $\Gr$ called the \defn{root}.
An \defn{isomorphism} of rooted networks is a network isomorphism that takes the root of one network to
the root of the other.
Similar definitions apply to doubly rooted networks $(\Gr,o,v)$.
For convenience, from now on consider only networks with mark space $(\Xiuniv, d_{\Xiuniv})$,
where $\Xiuniv$ is some fixed uncountable complete separable metric space, such as $\N^\N$ or the Hilbert cube,
since all possible mark spaces are homeomorphic to a subset of such a $\Xiuniv$.
Let $\mathcal{G}$ denote the set of isomorphism classes of nonempty, locally finite, connected networks, and 
let $\mathcal{G}_{*}$ (resp.\ $\mathcal{G}_{**}$) be the set of isomorphism classes of singly (resp.\ doubly) rooted
networks of the same kind.
The isomorphism class of a network $\Gr$ (resp.\ $(\Gr,o)$, or $(\Gr,o,v)$) is denoted by $[\Gr]$ (resp.\ $[\Gr,o]$ or $[\Gr,o,v]$).

The sets $\mathcal{G}_{*}$ and $\mathcal{G}_{**}$ are equipped with natural
metrics
making them complete separable metric spaces
(cf.~\cite{aldous2007processes}).
The distance $d_{\mathcal{G}_*}([\Gr_1, o_1], [\Gr_2, o_2])$ between the isomorphism classes of $(\Gr_1,o_1)$ and $(\Gr_2,o_2)$ is $1/(1+\alpha)$, where
$\alpha$ is the supremum of those $r>0$ such that there is a rooted isomorphism of the balls
of graph-distance $\lfloor r \rfloor$ around the roots of $\Gr_1, \Gr_2$ such that each
pair of corresponding marks has distance less than $1/r$.
The distance on $\mathcal{G}_{**}$ is defined 
similarly and the projections $[\Gr,o,v] \mapsto [\Gr,o]$ and $[\Gr,o,v] \mapsto [\Gr,v]$ are continuous.

A \defn{random (rooted) network} is a random element in $\mathcal{G}_{*}$ equipped with its Borel $\sigma$-algebra
$\mathcal{B}(\mathcal{G}_{*})$.
A random network $[\bs{\Gr},\bs{o}]$ is called \defn{unimodular} if for all measurable 
$g:\mathcal{G}_{**} \to \R_{\geq 0}$, the following \defn{mass transport principle} is satisfied:
\begin{align}\label{eq:mtp}
    \E \sum_{v \in V(\bs{\Gr})} g[\bs{\Gr},\bs{o},v] = \E \sum_{v \in V(\bs{\Gr})} g[\bs{\Gr},v,\bs{o}].
\end{align}
Heuristically, the root of a unimodular network is picked uniformly at random from its vertices.
However, since there is no uniform distribution on an infinite set of vertices,
the mass transport principle~\eqref{eq:mtp} is used in lieu of requiring the root to be picked uniformly at random.
One should take care to note that the sums in the previous equation depend
only on the isomorphism class $[\bs{\Gr},\bs{o}]$ and not which representative is used.

Next, the notions of covariant vertex-shifts, foils, connected components, and the cardinality classification of components
of a unimodular network are reviewed.
See~\cite{baccellieternal} for a reference on these concepts.
A \defn{(covariant) vertex-shift} is a map $\Phi$ which associates to each network $\Gr$
a function $\Phi_\Gr: V(\Gr) \to V(\Gr)$ such that $\Phi$ commutes with network isomorphisms
and the function $[\Gr,o,v] \to 1_{\set{\Phi_\Gr(o)=v}}$ is measurable on $\mathcal{G}_{**}$.
For a vertex-shift $\Phi$, define two equivalence relations on each network $\Gr$
by saying $u,v \in V(\Gr)$ are in the same \defn{$\Phi$-foil} if $\Phi_\Gr^n(u) = \Phi_\Gr^n(v)$ for some $n \in \N$,
or in the same \defn{$\Phi$-component} if $\Phi_\Gr^n(u) = \Phi_\Gr^m(v)$ for some $n,m \in \N$.
Two vertices are in the same $\Phi$-component if their forward orbits under $\Phi$ intersect,
whereas they are in the same $\Phi$-foil if, after some finite number of applications of $\Phi$,
the vertices meet.
The \defn{$\Phi$-graph of $\Gr$} is the graph drawn on $\Gr$ with vertices $V(\Gr)$ and edges from each $v \in V(\Gr)$ to $\Phi_\Gr(v)$.
The following is a special case of the classification theorem appearing in~\cite{baccellieternal}.

\begin{theorem}[Foil Classification in Unimodular Networks~\cite{baccellieternal}]\label{thm:foil-classification-theorem}
    Let $[\bs{\Gr},\bs{o}]$ be a unimodular network and $\Phi$ a vertex-shift.
    Almost surely, every vertex has finite degree in the $\Phi$-graph of $\bs{\Gr}$.
    In addition, each component $C$ of the $\Phi$-graph of $\bs{\Gr}$
    falls in one of the following three classes:
    \begin{enumerate}
        \item Class F/F\@: $C$ and all its foils are finite, and there is a unique cycle in $C$.
        \item Class I/F\@: $C$ is infinite but all its foils are finite, there are no cycles in $C$, and there is a unique bi-infinite path in $C$.
        \item Class I/I\@: $C$ is infinite and all its foils are infinite, and there are no cycles or bi-infinite paths in $C$.
    \end{enumerate}
\end{theorem}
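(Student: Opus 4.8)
The plan is to exploit two structural features: that the $\Phi$-graph is a \emph{functional graph}, since $\Phi_{\bs\Gr}$ is a function and hence every vertex has out-degree exactly $1$, and that the mass transport principle~\eqref{eq:mtp} controls vertex averages. First I would dispose of the finite-degree claim. Out-degrees are identically $1$, so the only question is the in-degree $\#\Phi_{\bs\Gr}^{-1}(\bs o)$. Applying~\eqref{eq:mtp} to $g[\Gr,o,v]:=1_{\set{\Phi_{\Gr}(o)=v}}$ — each vertex sends one unit of mass to its $\Phi$-image — makes the left side equal to $\E[1]=1$, while the right side counts preimages, so $\E[\#\Phi_{\bs\Gr}^{-1}(\bs o)]=1$. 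Thus $\bs o$, and by the usual covariance argument a.s.\ every vertex, has finite in-degree and therefore finite degree in the $\Phi$-graph.

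Next I would split components according to whether the forward orbit $o,\Phi_{\bs\Gr}(o),\Phi_{\bs\Gr}^2(o),\dots$ is eventually periodic. In a functional graph a component contains a cycle iff some (equivalently every) forward orbit in it is eventually periodic, and out-degree $1$ forces that cycle to be \emph{unique}. I would then show every \emph{cyclic} component is a.s.\ finite by mass transport: route one unit from each vertex to the unique cycle vertex its orbit first reaches. If such a component were infinite, pigeonhole (finite cycle, infinitely many feeders) would force some cycle vertex to receive infinite mass, so the expected mass into $\bs o$ would be infinite on a positive-probability event, contradicting that the mass sent out of $\bs o$ is at most $1$. This settles Class F/F: finite component, finite foils, unique cycle.

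In the remaining, acyclic, case forward orbits never repeat, so every vertex launches an infinite forward ray, and because the component is a functional-graph tree any two rays within it eventually coalesce, determining a single common forward end. The foils are then exactly the ``generations'' measured toward this end, and the induced action of $\Phi_{\bs\Gr}$ on the set of foils is a map sending a foil $F_n$ into the next foil $F_{n+1}$ with $\#F_{n+1}\leq\#F_n$; a bi-infinite path is precisely a choice of one vertex per generation, consistent with $\Phi_{\bs\Gr}$, extending infinitely far backward (a spine). Here I would establish the dichotomy that within a component either all foils are finite or all are infinite, via a mass transport comparing neighboring foil sizes; when all foils are finite, the generation tree is infinite, finitely branching (finite in-degrees from the first step), and nonempty at every level, so König's lemma produces a spine, giving Class I/F, while infinite foils preclude any spine, giving Class I/I.

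The step I expect to be the main obstacle is exactly this last paragraph: pinning down the I/F versus I/I alternative and the \emph{uniqueness} of the bi-infinite path. Ruling out the mixed case — some foils finite, others infinite within one component — and ruling out a spine coexisting with infinite foils are where the ``uniform root'' heuristic behind~\eqref{eq:mtp} must be converted into genuine averaging estimates, since the monotonicity $\#F_{n+1}\leq\#F_n$ alone permits an infinite-to-finite transition that only unimodularity excludes. For uniqueness I would note that two spines share the common forward end, hence agree from some generation onward and differ only on their backward halves, and then transport mass along the discrepancy of a hypothetical second spine to contradict the finite expected in-degree established in the first step. These mass-transport estimates are where I would concentrate the care.
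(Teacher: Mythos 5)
First, a point of reference: the paper does not prove \Cref{thm:foil-classification-theorem} at all; it is imported from~\cite{baccellieternal} as a black box, and the paper itself notes that the engine of the proof there is the no infinite/finite inclusion lemma, \Cref{lem:no-infinite-finite-inclusion}. So your attempt must stand on its own. Parts of it do: the transport $g[\Gr,o,v]=1_{\set{\Phi_\Gr(o)=v}}$ does give expected in-degree $1$ and hence a.s.\ finite degrees; the argument that cyclic components are a.s.\ finite (transport to the first cycle vertex reached, plus the routine lemma that a property holding somewhere with positive probability holds at the root with positive probability) correctly settles class F/F; and your uniqueness sketch is essentially the standard one — on the covariant set $C$ of vertices lying on bi-infinite paths, transporting mass along $\Phi$ forces the number of $C$-preimages to equal $1$ a.s.\ on $C$, so two spines can never merge.

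The genuine gap is in the acyclic case, exactly where the theorem has content. Your K\"onig's lemma step assumes that, when all foils are finite, the backward tree over a fixed foil is ``nonempty at every level.'' That is not implied by the component being infinite: take $V(\Gr)=\N$ with $\Phi_\Gr(n)=n+1$. This component is infinite and acyclic, all foils are singletons, yet every backward tree is finite and there is no bi-infinite path; the component is infinite only in the forward direction (it ``evaporates''). Ruling out evaporating components almost surely is a unimodularity statement, not a combinatorial one; in~\cite{baccellieternal} it is handled by \Cref{lem:no-infinite-finite-inclusion}, applied for instance to the partition into components and the covariant subset of vertices whose foil contains no $\Phi_\Gr$-image of any vertex: in an evaporating component that subset is the minimal foil, finite and nonempty inside an infinite partition element, a contradiction, while in the remaining case the foils are indexed by $\Z$ and a spine follows from an inverse-limit (K\"onig) argument. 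Your proposal never isolates this step. Likewise, the two facts you explicitly defer — that foils within a component are all finite or all infinite, and that infinite foils exclude bi-infinite paths — are deterministically \emph{false} for functional graphs with finite in-degrees (for the latter, attach to each vertex $n\in\Z$ of a bi-infinite spine its own backward ray feeding into it: in-degrees are at most $2$, every foil is infinite, and the spine persists), so no amount of the combinatorial structure you set up can yield them; they need specific transports that the proposal does not supply. Finally, one smaller error: the monotonicity $\#F_{n+1}\leq\#F_n$ is wrong, since the successor foil consists of $\Phi_\Gr(F_n)$ together with all vertices of $F_{n+1}$ having no $\Phi_\Gr$-preimage, and such source vertices can make foils grow in the forward direction.
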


The last tool needed from~\cite{baccellieternal} is the so-called no infinite/finite inclusion lemma, which is used heavily
in the proof of \Cref{thm:foil-classification-theorem}.
To state it, the following definitions are needed.
A \defn{covariant subset (of the set of vertices)} is a map $C$ which associates
to each network $\Gr$ a set $C_\Gr\subseteq V(\Gr)$ such that $C$ commutes with network isomorphisms, and such that $[\Gr,o]\mapsto 1_{\set{o \in C_\Gr}}$
is measurable.
A \defn{covariant (vertex) partition} is a map $\Pi$ which associates to all networks $\Gr$ a partition $\Pi_\Gr$
of $V(\Gr)$ such that $\Pi$ commutes with network isomorphisms, and such that
the (well-defined) subset $\set{[G,o,v]: v \in \Pi_G(o)} \subseteq \mathcal{G}_{**}$ is measurable,
where $\Pi_\Gr(o)$ denotes the partition element in $\Pi_\Gr$ containing $o$.
Then one has the following.

\begin{lemma}[No Infinite/Finite Inclusion~\cite{baccellieternal}]\label{lem:no-infinite-finite-inclusion}
    Let $[\bs{\Gr},\bs{o}]$ be a unimodular network, $\Pi$ a covariant partition, and $C$
    a covariant subset.
    Almost surely, there is no infinite element $E$ of $\Pi_{\bs{\Gr}}$ such that $E \cap C_{\bs{G}}$
    is finite and nonempty.
\end{lemma}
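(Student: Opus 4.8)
The plan is to restate the conclusion as the emptiness of a single covariant subset and then kill that subset with two applications of the mass transport principle~\eqref{eq:mtp}. Define the covariant subset $D$ by letting $D_\Gr$ be the set of all $v \in V(\Gr)$ such that $\Pi_\Gr(v)$ is infinite and $\Pi_\Gr(v) \cap C_\Gr$ is finite and nonempty. Since membership of $v$ in $D_\Gr$ depends only on the partition element $\Pi_\Gr(v)$, the set $D_\Gr$ is a union of entire (infinite) partition elements, and the event in the statement --- that some infinite $E \in \Pi_\Gr$ has $E \cap C_\Gr$ finite and nonempty --- is exactly $\{D_{\bs\Gr} \neq \emptyset\}$. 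So it suffices to prove $D_{\bs\Gr} = \emptyset$ almost surely.

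First I would record the general principle that a covariant subset which almost surely avoids the root is almost surely empty: if $D$ is any covariant subset with $\P(\bs o \in D_{\bs\Gr}) = 0$, then $D_{\bs\Gr} = \emptyset$ a.s. To see this, apply~\eqref{eq:mtp} to the transport in which, whenever $D_\Gr \neq \emptyset$, each vertex $o$ sends unit mass split equally among the finitely many vertices of $D_\Gr$ nearest to it in graph distance (finiteness of this set uses local finiteness and connectedness). The outgoing mass of $o$ is $1_{\{D_\Gr \neq \emptyset\}}$, so the left side of~\eqref{eq:mtp} equals $\P(D_{\bs\Gr} \neq \emptyset)$; the incoming mass is supported on $D_\Gr$, so the right side is an integral over $\{\bs o \in D_{\bs\Gr}\}$, a null set, and hence vanishes. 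Thus $\P(D_{\bs\Gr} \neq \emptyset) = 0$.

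Given this, it remains to show $\P(\bs o \in D_{\bs\Gr}) = 0$ for the specific $D$ above. I would apply~\eqref{eq:mtp} to the transport $g$ in which a vertex $o \in D_\Gr$ sends total mass $1$, distributed equally over the finite nonempty set $\Pi_\Gr(o) \cap C_\Gr$, and every other vertex sends nothing. The outgoing mass of $o$ is $1_{\{o \in D_\Gr\}}$, so the left side of~\eqref{eq:mtp} is exactly $\P(\bs o \in D_{\bs\Gr})$. For the incoming mass, a vertex $o$ receives something only if $o \in C_\Gr$ and $E := \Pi_\Gr(o)$ is infinite with $E \cap C_\Gr$ finite and nonempty; in that case all of the infinitely many senders in $E$ contribute, so the incoming mass is $|E|/|E \cap C_\Gr| = \infty$. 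Hence the incoming mass takes values in $\{0, \infty\}$ and the right side of~\eqref{eq:mtp} lies in $\{0, \infty\}$. Since the left side is at most $1$, both sides must equal $0$, giving $\P(\bs o \in D_{\bs\Gr}) = 0$. Combined with the principle from the previous paragraph, $D_{\bs\Gr} = \emptyset$ a.s., which is the claim.

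The routine points are checking that $D$ is a genuine covariant subset and that both transports are measurable on $\mathcal{G}_{**}$; these follow from measurability of $\Pi$ and $C$ together with countable combinations. The one real subtlety --- and the crux of the argument --- is the interplay of the infinite numerator $|E|$ with the finite denominator $|E \cap C_\Gr|$: it is precisely the infiniteness of the partition element against the finiteness of its $C$-trace that forces the incoming mass to be infinite while the outgoing mass stays bounded by $1$, so that the mass transport principle can be reconciled only by the bad configuration having probability zero.
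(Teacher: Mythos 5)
Your proof is correct. The paper itself offers no proof of this lemma --- it is imported directly from~\cite{baccellieternal} --- and your two-step argument (first the $\infty$-versus-$1$ mass-transport comparison, splitting unit mass over the finite trace $\Pi_{\Gr}(o)\cap C_{\Gr}$ to force $\P(\bs{o}\in D_{\bs{\Gr}})=0$, then the ``a covariant subset that a.s.\ avoids the root is a.s.\ empty'' transport to upgrade this to nonexistence of any bad partition element) is essentially the standard proof given in that reference, so there is nothing to flag beyond the routine measurability checks you already noted.
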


\subsection{Embedding Subgraphs of the Doeblin Graph as Random Networks}

In order to view a subgraph of $\G$ as a random network, one must ensure the subgraph 
is nonempty, locally finite, connected, and a root $\bs{o}$ has been suitably chosen.
Since the vertices of $\G$ come from the fixed countable space $\Z \times S$, the following setup will help to verify all the technicalities.

Let $V:= \Z \times S$.
Suppose that
\begin{equation}
    \bs{\Gr}:\Omega \to \set{0,1}^V \times \Xi^V \times \set{0,1}^{V\times V} \times \Xi^{V \times V} 
    =: (f_V, \xi_V, f_E, \xi_E)
\end{equation}
is measurable (where the codomain is given its product topology and corresponding Borel $\sigma$-algebra).
Then $\bs{\Gr}(\omega)$ can be considered for each $\omega \in \Omega$ as a (possibly empty, possibly not locally finite, possibly disconnected) network in the following way.
For each $u,v \in V$, interpret 
\begin{enumerate}
    \item\label{it:d1} $f_V(v)$ as the indicator that $v \in V(\bs{\Gr})$,
    \item\label{it:d2} $f_E(u,v)$ as the indicator that the edge $\set{u,v} \in E(\bs{\Gr})$,
    \item\label{it:d3} $\xi_V(u)$ as the mark of $u$, and 
    \item\label{it:d4} $\xi_E(u,v)$ as the mark of the vertex-edge pair $(u, \set{u,v})$.
\end{enumerate}
That is, use \cref*{it:d1,it:d2,it:d3,it:d4} to \emph{define} $V(\bs{\Gr}), E(\bs{\Gr})$, and the marks of vertices and edges in $\bs{\Gr}$.
Note that $f_E$ must be symmetric because edges are not directed, but $\xi_E$ may not be, since each edge is associated with two marks, one per vertex.
If one wants to consider directed edges, one instead uses undirected edges and uses the marks on edges to specify which direction the edge should point.
The definition of $\xi_V(u)$ when $u \notin V(\bs{\Gr})$ is irrelevant, 
and similarly for the definition of $f_E(u,v)$ and $\xi_E(u,v)$ if either of $u$ or $v$ is not in $V(\bs{\Gr})$.
All statements about the network defined by $\bs{\Gr}$ are then translated into statements about the maps $(f_V, \xi_V, f_E, \xi_E)$.
For instance, 
\begin{align*}
    \text{\{$\bs{\Gr}$ is not empty\}} =
    \set{\sum_{v \in V} f_V(v) > 0}.
\end{align*}
This is exactly the kind of construction used to define the Doeblin graph $\G$.
In the case of $\G$, 
\begin{enumerate}
    \item $f_V=1$ on $\Z \times S$,
    \item $f_E\left((t,x), (t+1, h(x, \xi_{t}))\right)
        =f_E\left((t+1, h(x, \xi_{t})),(t,x)\right) =1$ for all $(t,x) \in \Z \times S$ 
        and $f_E = 0$ otherwise,
    \item $\xi_V(t,x) = \xi_{t}$ for all $(t,x) \in \Z \times S$, and
    \item $\xi_E\left((t,x), (t+1, h(x, \xi_{t,x}))\right)=1$ for all $t \in \Z, x \in S$ to indicate the edge is directed forwards in time.
\end{enumerate}
This construction also works for the bridge graph $\B$ as well.
When a $\bs{\Gr}$ has been constructed as in this section,
one can see $\bs{\Gr}$ as a random network after any measurable choice of root, given that it is nonempty and locally finite.

\begin{lemma}\label{lem:class-of-tangible-network-is-measurable}
    Suppose $\bs{\Gr}=(f_V,\xi_V,f_E,\xi_E)$ is as above
    and a.s.\ $\bs{\Gr}$ is nonempty, locally finite, and connected.
    Then for any measurable choice of root $\bs{o} \in V(\bs{\Gr})$,
    $[\bs{\Gr}, \bs{o}]$ is a random network.
\end{lemma}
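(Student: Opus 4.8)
The plan is to prove that the map $\omega \mapsto [\bs{\Gr}(\omega), \bs{o}(\omega)]$ is measurable from $\Omega$ into $\mathcal{G}_{*}$ equipped with $\mathcal{B}(\mathcal{G}_{*})$. The first move is to eliminate the randomness of the root. Since $\bs{o}$ is measurable and takes values in the countable set $V = \Z \times S$, the events $\set{\bs{o} = v}$ for $v \in V$ form a countable measurable partition of $\Omega$ (up to the null set where the hypotheses fail). On $\set{\bs{o} = v}$ one has $v \in V(\bs{\Gr})$, so it suffices to show that for each fixed $v \in V$ the map $\omega \mapsto [\bs{\Gr}(\omega), v]$ is measurable on the event $\set{v \in V(\bs{\Gr})}$; gluing these countably many measurable restrictions then yields the full statement. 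This reduces everything to the case of a deterministic root $v$.

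With $v$ fixed, the key object is the rooted ball $B_r(\omega)$ of graph-distance $r$ around $v$ in $\bs{\Gr}(\omega)$, which is a.s.\ a finite, connected, rooted marked network because $\bs{\Gr}$ is a.s.\ locally finite and connected. First I would check that $\omega \mapsto B_r(\omega)$ is measurable in the following concrete sense. For each $u \in V$ and each $k$, the event $\set{d_{\bs{\Gr}}(v,u) \leq k}$ is measurable: it is the countable union, over the (countably many) vertex sequences of length $\leq k$ from $v$ to $u$ in $V$, of the coordinate events recorded by $f_V$ and $f_E$ asserting that such a sequence is a path actually present in $\bs{\Gr}$. Consequently the random vertex set $V(B_r) = \set{u : d_{\bs{\Gr}}(v,u) \leq r}$ has measurable level sets, since $\set{V(B_r) = W} = \bigcap_{u \in W}\set{d_{\bs{\Gr}}(v,u) \leq r} \cap \bigcap_{u \in V \setminus W}\set{d_{\bs{\Gr}}(v,u) > r}$ is a countable intersection of measurable events for each finite $W \subseteq V$; and on $\set{V(B_r) = W}$ the induced edges and the marks of $B_r$ are read off from the measurable coordinate maps $f_E$, $\xi_V$, $\xi_E$ restricted to the finite set $W$.

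It then remains to pass from the explicit finite data to its isomorphism class in $\mathcal{G}_{*}$ and to take a limit. Decomposing over the countably many finite vertex sets $W \ni v$, on each event $\set{V(B_r) = W}$ the network $B_r$ is determined by the finitely many bits $f_E|_{W \times W}$ together with the marks $(\xi_V|_W, \xi_E|_{W \times W}) \in \Xiuniv^{W} \times \Xiuniv^{W \times W}$, and the assignment of these data to $[B_r, v] \in \mathcal{G}_{*}$ is continuous in the marks and locally constant in the discrete graph structure, hence measurable; so $\omega \mapsto [B_r(\omega), v]$ is measurable for every $r$. To assemble the infinite network, I would observe that for $\rho < r$ one has $\lfloor \rho \rfloor \leq r-1$, and every vertex within distance $r-1$ of $v$ retains all of its neighbors inside $B_r$, so the balls of radius $\lfloor \rho \rfloor$ about the root agree exactly — marks included — in $B_r$ and in $\bs{\Gr}$. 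By the definition of $d_{\mathcal{G}_{*}}$ this gives $d_{\mathcal{G}_{*}}([B_r,v],[\bs{\Gr},v]) \leq 1/r$, whence $[B_r, v] \to [\bs{\Gr}, v]$ a.s.\ as $r \to \infty$ on the event where the hypotheses hold; a pointwise a.s.\ limit of measurable maps into the Polish space $\mathcal{G}_{*}$ is measurable, which proves the claim (defining the map arbitrarily but measurably on the remaining null set). I expect the main obstacle to be the bookkeeping in the middle steps — verifying that graph distance is measurable and that quotienting the finite ball data by marked graph isomorphism is measurable — rather than the limiting argument, which is routine once the measurable approximants $[B_r, v]$ are in hand.
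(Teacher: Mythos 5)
Your proof is correct, and it takes a genuinely different route from the paper's. Both arguments rest on the same combinatorial foundation --- the event $\set{d_{\bs{\Gr}}(v,u)\leq k}$ is a countable union, over paths in $V=\Z\times S$, of coordinate events in $f_V$ and $f_E$ --- but they diverge afterwards. The paper never forms finite approximants: it fixes a target $[\Gr,o]\in\mathcal{G}_*$ and $\epsilon>0$, enumerates the countably many rooted isomorphic copies $(\Gr',o')$ of $(\Gr,o)$ with vertices in $V$, and writes the event $\set{d_{\mathcal{G}_*}([\bs{\Gr},\bs{o}],[\Gr,o])<\epsilon}$ as a countable union of explicit events ($\bs{o}=o'$, the $\lceil 1/\epsilon\rceil$-neighborhood of $\bs{o}$ equal to $\Gr'$, marks within $\epsilon$ of those of $\Gr'$); since $\mathcal{G}_*$ is separable, measurability of these ball preimages gives the lemma. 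You instead (i) freeze the root via the countable partition $\set{\bs{o}=v}$, (ii) show each finite ball $[B_r,v]$ is a measurable map into $\mathcal{G}_*$ by decomposing over its possible vertex sets and using continuity in the marks for fixed discrete data, and (iii) recover $[\bs{\Gr},v]$ as an a.s.\ pointwise limit of these maps via the estimate $d_{\mathcal{G}_*}([B_r,v],[\bs{\Gr},v])\leq 1/r$. Your route buys a cleaner interface with the metric: the only metric input is a one-sided bound comparing a network to its own $r$-ball, plus the standard fact that pointwise limits of measurable maps into a metric space are measurable; by contrast, the paper must identify the $\epsilon$-ball event exactly with a union of combinatorial events, a step whose fine print (the supremum over $r$ in the definition of the metric, mark tolerance $1/r$ versus $\epsilon$, isomorphic versus equal neighborhoods) it passes over rather quickly. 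The paper's route buys brevity: no approximation scheme and no continuity-in-marks argument, and it produces measurability of ball events directly. The only caveats in your write-up are cosmetic: the bound in (iii) is really $1/(1+r)$ (which is $\leq 1/r$, so your claim stands), and, as you note, one must fix the map measurably on the null event where the hypotheses fail --- the same convention the paper adopts silently.
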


\begin{postponedproof}
    Write the event that $[\bs{\Gr},\bs{o}]$ is within $\epsilon>0$ of some fixed network $[\Gr, o]$
    as a countable union over rooted isomorphic copies $(\Gr', o')$ of $(\Gr,o)$ with vertices in $V$
    of the event that $\bs{o}=o'$, the neighborhood of radius $\lceil\frac{1}{\epsilon}\rceil$ around $\bs{o}$ is exactly $\Gr'$,
    and the marks $\xi_V(u)$ for $u \in V(\Gr')$ and $\xi_E(v,w)$ for $\set{v,w} \in E(\Gr')$ are within $\epsilon$
    of the corresponding vertex and edge marks of $(\Gr', o')$.
    Each of these conditions individually are written in terms of events using the maps $f_V,\xi_V, f_E, \xi_E$,
    showing the desired measurability of $\omega \mapsto [\bs{\Gr}(\omega), \bs{o}(\omega)]$.
    The full proof of \Cref{lem:class-of-tangible-network-is-measurable} given in the appendix.
\end{postponedproof}

Thus indeed $\G$ may be seen as a random network when rooted and marked, assuming it is locally finite and connected.
But the question remains whether this may be done in such a way as to make $\G$ unimodular.
The first approach one might take 
is to investigate whether $\G$, rooted at $(0,X_0)$ for some (random) choice of
$X_0\in S$,
is unimodular.
Two natural choices, at least in the standard CFTP setup,
are to take $X_0$ to be the output of the CFTP algorithm, 
or to take $X_0$ to be independent of $\G$.
For simplicity, the standard CFTP setup refers to the case where $\G$ 
has fully independent transitions, $S$ is finite, and the CFTP algorithm succeeds a.s.
The following proposition determines when $\G$ can be unimodular under the
previous choices of $X_0$.

\begin{proposition}\label{prop:naiive-unimodularity-in-G}
    Suppose $\G$ is an \eft{}, that $\G$ has each $(t,x) \in V(\G)$ marked by $(x, \xi_{t})$, and that $X_0$ is a random choice in $S$.
    Then
    \begin{itemize}
        \item{} if $[\G,(0,X_0)]$ is unimodular, then $S$ is finite and $X_0$ is uniformly distributed on $S$,
        \item{} if $X_0$ is independent of $\G$ and uniformly distributed on a finite $S$, then $[\G, (0,X_0)]$ is unimodular, and
        \item{} if $X_0$ is the output of the CFTP algorithm in the standard CFTP setup, then
            $[\G, (0,X_0)]$ is unimodular if and only if $S$ has a single element.
    \end{itemize}
\end{proposition}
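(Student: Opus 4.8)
The plan is to verify or refute the mass transport principle \eqref{eq:mtp} in each case by choosing transports adapted to three facts: that $V(\G)=\Z\times S$, that an \eft{} is a tree whose directed-edge marks let one recover the time difference between any two vertices as an isomorphism invariant, and that $\xi$ is stationary. A transport will always be legitimate because each quantity it reads off (relative time, the state-mark $x$ of a vertex, the $\fnext$-foil structure) is an isomorphism invariant of the marked doubly-rooted network. For the first bullet I would fix $y\in S$ and let each vertex send unit mass to the unique vertex sitting at its own time in state $y$ (this target exists and is unique since $V(\G)=\Z\times S$). From the root $(0,X_0)$ the outgoing mass is always $1$, so the left side of \eqref{eq:mtp} is $1$; the root receives mass only from vertices at time $0$, and only when its own state $X_0$ equals $y$, in which case it receives from all $|S|$ vertices of $\{0\}\times S$, so the right side is $|S|\,\P(X_0=y)$. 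The principle then forces $1=|S|\,\P(X_0=y)$ for every $y$, which no probability satisfies when $|S|=\infty$; hence $S$ is finite and $\P(X_0=y)=1/|S|$, i.e. $X_0$ is uniform.

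For the second bullet I would instead verify \eqref{eq:mtp} directly. Since $X_0$ is uniform on a finite $S$ and independent of $\G$, averaging over the root expands each side into $\tfrac{1}{|S|}\sum_{x\in S}\sum_{s\in\Z}\sum_{z\in S}\E\,g[\G,(0,x),(s,z)]$ and the analogous triple sum with the two roots swapped; all interchanges of sum and expectation are licensed by Tonelli because $g\ge 0$. Stationarity of $\xi$ together with shift-covariance of $\G$ gives $\E\,g[\G,(s,z),(0,x)]=\E\,g[\G,(0,z),(-s,x)]$ (translate time so that the second root sits at $0$), and then the substitution $s\mapsto -s$ and a relabeling of the dummy state variables $x\leftrightarrow z$ turns the swapped sum into the original one, which is the claimed identity. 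That $[\G,(0,X_0)]$ is a genuine random network in the first place follows from local finiteness and connectedness of the \eft{} via \Cref{lem:class-of-tangible-network-is-measurable}.

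For the third bullet, the forward direction is immediate: if $|S|=1$ then $\G$ is a single bi-infinite line, the canonical unimodular $\Z$ (equivalently, apply the second bullet with $X_0$ the lone state). For the converse I would argue by contradiction: suppose $[\G,(0,X_0)]$ is unimodular with $|S|=n\ge 2$. By the remark following \Cref{prop:PW-CFTP}, the CFTP output places the root $(0,X_0)$ on the unique bi-recurrent path $\beta$ of \Cref{prop:mainthm-teaser}, and $\beta$ meets each $\fnext$-foil in exactly one vertex. In the standard CFTP setup all same-time paths coalesce forward, so the foil of the root is exactly $\{0\}\times S$, of size $n$. Now transport unit mass from each vertex to the unique $\beta$-vertex in its foil: the root sends its mass to itself, giving left side $1$, while the root, lying on $\beta$, receives from all $n$ vertices of its foil, giving right side $n$. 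Thus $1=n$, a contradiction, forcing $n=1$. Note that the first bullet alone only yields that $X_0=\beta_0\sim\pi$ is uniform, which does not force $|S|=1$, so this stronger foil transport is essential.

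The delicate point — the step I expect to be the main obstacle — is the legitimacy of the transport in the third bullet: it requires that the distinguished path $\beta$ be covariantly recoverable from the marked network, so that ``the unique $\beta$-vertex in a foil'' is an isomorphism invariant. I would justify this either through the intrinsic description of $\beta$ as the unique bi-infinite path visiting every state infinitely often in the past, or by invoking \Cref{thm:foil-classification-theorem} to place the connected, infinite, finite-foiled \eft{} in class I/F and use its unique covariant bi-infinite path; either route also requires the subsidiary verification that the $\fnext$-foils are exactly the sets $\{t\}\times S$, which is where the standing assumptions of the standard CFTP setup (finite $S$, fully independent transitions, almost sure coalescence) enter.
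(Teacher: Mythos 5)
Your proposal is correct, and on the first two bullets it is essentially the paper's own argument: the paper also proves uniformity via a mass transport restricted to foils of a ``follow'' vertex-shift $\eta$ (whose foils in a connected \eft{} are exactly the time slices $\Fam{\G_t}_{t\in\Z}$), using the state component of the marks to distinguish states, and it proves the second bullet by the same stationarity/shift-covariance computation you give (the paper only writes it for $g$ supported on directed neighbors; your version for general $g$ is the same calculation with an extra sum over times). The third bullet is where you genuinely diverge. The paper's proof never invokes the bi-recurrent path: it transports mass along edges, observes that every vertex has out-degree exactly $1$ while the CFTP output a.s.\ has in-degree at least $1$, so unimodularity forces the root's in-degree to equal $1$ a.s.; then the standard unimodularity fact that a covariant property holding a.s.\ at the root holds a.s.\ at every vertex gives in-degree $1$ everywhere, and a connected tree in which every vertex has in- and out-degree $1$ is a copy of $\Z$, whence $\#S=1$. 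Your foil transport onto $\beta$ reaches the same contradiction in one step, but at the cost of exactly the machinery you flag as delicate: covariant recoverability of $\beta$ (via \Cref{thm:foil-classification-theorem}, or via uniqueness of bi-infinite paths in the standard CFTP setup), the identification of the CFTP output with $\beta_0$ from the remark following \Cref{prop:PW-CFTP}, and the fact that foils are time slices. All of these are available in the paper, so your route is sound; the paper's route buys economy (it needs only that the CFTP output a.s.\ has an incoming edge, not that it lies on $\beta$, and it needs no uniqueness of bi-infinite paths), while yours buys a one-shot quantitative contradiction ($1=\#S$ from a single transport) and, as you correctly note, makes explicit why the first bullet alone cannot close the argument when $\pi$ happens to be uniform.
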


\begin{postponedproof}
    The first point follows by constructing for each $x,y \in S$ a mass transport that, when applied to $\G$,
    sends mass 1 within vertical slices of $\G$ from the vertex in state $x$ to the vertex in state $y$.
    Unimodularity then gives $\P(X_0 = x) = \P(X_0 = y)$.
    The second point follows from the definition of unimodularity.
    The third point follows by noting that the output of the CFTP algorithm
    has at least one child, but unimodularity implies that it must have
    one on average, so a.s.\ it has one child.
    A nonempty tree where every vertex has one incoming and one outgoing edge is isomorphic to $\Z$, so $S$ can only have one state.
    The full proof of~\Cref{prop:naiive-unimodularity-in-G} is given in the
	appendix.
\end{postponedproof}

While choosing $X_0$ uniformly distributed
on $S$ and independent of $\G$ works when $S$ is finite,
unimodularity of the whole $\G$ is doomed in the general case, as there
is no uniform distribution on an infinite $S$.
This is the reason for introducing the bridge graph $\B$,
which is locally finite.
However, the bridge graph may still not be connected, so a spine is added to it
to make it connected.

\begin{corollary}\label{cor:bridge-graphs-embedding}
    Let $\overline{\B}$ be $\B$ \defn{with spine added},
    i.e.\ with edges from each $(t,x^*)$ to $(t+1,x^*)$ for all $t \in \Z$ added.
    Then for any measurable marks and any measurable choice of root $\bs{o} \in V(\B)$, 
    $[\overline{\B}, \bs{o}]$ is a random network.
\end{corollary}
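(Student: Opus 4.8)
The plan is to reduce everything to \Cref{lem:class-of-tangible-network-is-measurable}, which already does the heavy lifting: once $\overline{\B}$ is exhibited as a measurable map $(f_V,\xi_V,f_E,\xi_E)$ into $\set{0,1}^{V}\times\Xi^{V}\times\set{0,1}^{V\times V}\times\Xi^{V\times V}$ (with $V=\Z\times S$) that is a.s.\ nonempty, locally finite, and connected, it follows that $[\overline{\B},\bs{o}]$ is a random network for any measurable choice of root. So the work is to verify these properties. Measurability is routine: the vertex indicator $f_V(t,y)=1_{\set{(t,y)\in V(\B)}}$ is measurable because, using the descendant characterization of $V(\B)$, one has $(t,y)\in V(\B)$ if and only if $F^{(s,x^*)}_t=y$ for some $s\leq t$, a countable union of $\xi$-measurable events; the edge indicator $f_E$ records the follow-map edges on $V(\B)$ together with the \emph{deterministic} spine edges $\set{(t,x^*),(t+1,x^*)}$, and the marks are measurable by hypothesis. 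Nonemptiness is immediate, since $(t,x^*)\in V(\B)$ for every $t$ (take $s=t$, so that $F^{(t,x^*)}_t=x^*$).

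For local finiteness, I would simply invoke \Cref{prop:mbeff-is-locally-finite}, which gives that $\B$ is a.s.\ locally finite. Passing from $\B$ to $\overline{\B}$ only adds the spine, which raises the degree of each vertex by at most two (a spine vertex $(t,x^*)$ gains the edges to $(t-1,x^*)$ and $(t+1,x^*)$) and leaves all other degrees unchanged, so $\overline{\B}$ is a.s.\ locally finite as well. Note too that the spine introduces no new vertices, since both endpoints of every spine edge already lie in $V(\B)$; hence $V(\overline{\B})=V(\B)$, and any measurable choice of root $\bs{o}\in V(\B)$ is a legitimate root of $\overline{\B}$.

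The one substantive step is connectivity, and it is precisely what the spine is for. The spine is by construction a single bi-infinite path through $\set{(t,x^*):t\in\Z}$, so all of these vertices lie in one component of $\overline{\B}$. Moreover, every vertex of $\B$ is joined within $\B$ (hence within $\overline{\B}$) to one of them: if $(t,y)\in V(\B)$, then by definition there is $s\leq t$ with $F^{(s,x^*)}_t=y$, and the portion of the path of $F^{(s,x^*)}$ between times $s$ and $t$ lies entirely in $\B$ and connects $(s,x^*)$ to $(t,y)$. Thus every vertex of $\overline{\B}$ reaches the spine, and the spine is connected, so $\overline{\B}$ is a.s.\ connected. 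With nonemptiness, local finiteness, and connectivity established, \Cref{lem:class-of-tangible-network-is-measurable} finishes the proof. I do not expect a genuine obstacle here: beyond citing \Cref{prop:mbeff-is-locally-finite} and \Cref{lem:class-of-tangible-network-is-measurable}, the only real content is the connectivity argument, which is the sole reason the spine is added in the first place.
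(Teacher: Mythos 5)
Your proof is correct and follows essentially the same route as the paper's: nonemptiness via $(t,x^*)\in V(\B)$, local finiteness via \Cref{prop:mbeff-is-locally-finite} plus the degree-at-most-two increase from the spine, connectivity via every vertex of $\B$ reaching the spine along a path of some $F^{(s,x^*)}$, and then \Cref{lem:class-of-tangible-network-is-measurable} to conclude. The extra detail you give on the measurability of $(f_V,\xi_V,f_E,\xi_E)$ and on why each vertex connects to the spine is just a more explicit spelling-out of what the paper leaves implicit.
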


\begin{proof}
    One has that $(0,x^*) \in V(\overline{\B})$, so $\overline{\B}$ is nonempty.
    Also $\overline{\B}$ is locally finite by \Cref{prop:mbeff-is-locally-finite}
    and the fact that adding the spine has increased the degree of each vertex by at most two.
    Finally, since each $v \in V(\overline{\B})$ is connected to some $(t, x^*)$, and the spine in $\overline{\B}$
    connects all such vertices, $\overline{\B}$ is connected.
    \Cref{lem:class-of-tangible-network-is-measurable} finishes the claim.
\end{proof}

Everything is in place to see the unimodular structure hidden in $\G$, which is handled in the next section.

\section{Unimodularizability and its Consequences}\label{sec:main-thm}

\subsection{Unimodularizability of the Bridge Graph}\label{sec:unimodularizability-of-bridge}

The following result identifies the unimodular structure inside $\G$.
For the rest of the document, each $(t,y)\in V(\B)$ is marked by $(y,\xi_{t})$ whenever considered as a vertex in a
rooted network.

\begin{theorem}\label{thm:main-thm-mbeft-is-unimodularizable}
    Any random network with distribution
    \begin{equation}\label{eq:unimodular-measure}
        \Psq(A) := \frac{1}{\E[\#\B_0]} \E\left[ \sum_{w \in V_0(\B)} 1_{\set{[\overline{\B}, w] \in A}}\right],
        \qquad A \in \mathcal{B}(\mathcal{G}_*).
    \end{equation}
    is unimodular. 
    The spine need not be added and $\B$ may also be used instead of $\overline{\B}$ if $\B$ is already connected.
\end{theorem}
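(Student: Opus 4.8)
The plan is to verify the mass transport principle \eqref{eq:mtp} for $\Psq$ directly, reducing it to the mass transport principle for $\Z$ (\Cref{lem:mtpforZ}). Fix a measurable $g:\mathcal{G}_{**}\to\R_{\geq 0}$. Since $x^*$ is positive recurrent, \Cref{prop:mbeff-is-locally-finite} gives $\E[\#\B_0]<\infty$, and $\E[\#\B_0]\geq 1>0$ because $(0,x^*)\in V(\B)$ always; hence $\Psq$ is a genuine probability measure and the normalizing constant cancels from both sides of \eqref{eq:mtp}. Thus it suffices to prove the unnormalized identity
\[
    \E\sum_{w\in V_0(\B)}\sum_{v\in V(\overline{\B})} g[\overline{\B},w,v]
    =
    \E\sum_{w\in V_0(\B)}\sum_{v\in V(\overline{\B})} g[\overline{\B},v,w].
\]

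To extract this from \Cref{lem:mtpforZ}, I would work on the canonical space $\Omega=\Xi^\Z$ with the shift $\theta_r(\Fam{\xi_t}_{t\in\Z})=\Fam{\xi_{r+t}}_{t\in\Z}$, exactly as in the proof of \Cref{prop:mbeff-is-locally-finite}, and introduce the $\Z\times\Z$-indexed mass transport
\[
    w(s,t) := \sum_{u\in V_s(\B)}\sum_{v\in V_t(\B)} g[\overline{\B},u,v],
\]
the total $g$-mass from sources in the time-$s$ slice of $\B$ to targets in the time-$t$ slice. Measurability of $w(s,t)$ is the doubly-rooted analogue of \Cref{lem:class-of-tangible-network-is-measurable}. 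The heart of the argument is checking compatibility, $w(s,t)\circ\theta_r=w(s+r,t+r)$. This rests on two facts: $\overline{\B}$ is shift-covariant, so its vertex set and spine translate by $-r$ under $\theta_r$ and $(a,z)\in V(\overline{\B}\circ\theta_r)$ iff $(a+r,z)\in V(\overline{\B})$; and the vertex marks transform correctly, since the mark of $(a,z)$ under $\theta_r$ is $(z,\xi_a\circ\theta_r)=(z,\xi_{a+r})$, which is precisely the mark of $(a+r,z)$ in the untranslated graph. Hence the time-shift $(a,z)\mapsto(a+r,z)$ is a mark-preserving isomorphism $\overline{\B}\circ\theta_r\to\overline{\B}$, and isomorphism-invariance of $g$ gives $g[\overline{\B}\circ\theta_r,(s,x),(t,y)]=g[\overline{\B},(s+r,x),(t+r,y)]$; reindexing the two slice-sums then yields compatibility.

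With compatibility in hand, \Cref{lem:mtpforZ} gives $\E[w^+]=\E[w^-]$, where $w^+=\sum_{t\in\Z}w(0,t)$ and $w^-=\sum_{s\in\Z}w(s,0)$. Collapsing the sums over slices, $w^+=\sum_{u\in V_0(\B)}\sum_{v\in V(\B)} g[\overline{\B},u,v]$ and $w^-=\sum_{u\in V(\B)}\sum_{v\in V_0(\B)} g[\overline{\B},u,v]$; after relabelling dummy variables and using $V(\overline{\B})=V(\B)$, these are exactly the left- and right-hand inner sums of the displayed identity, so taking expectations finishes it. I expect the compatibility check of the previous paragraph to be the only genuine obstacle — one must be careful that the shift-covariantly defined spine edges and all marks transport correctly — while everything else is bookkeeping; the nonnegativity of $g$ means \Cref{lem:mtpforZ} applies even when the slice-sums are infinite, so no integrability issues arise. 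Finally, the last sentence of the statement follows because the spine plays no role in the argument beyond guaranteeing connectedness (membership in $\mathcal{G}_*$ via \Cref{cor:bridge-graphs-embedding}); when $\B$ is already connected one runs the identical computation with $\B$ in place of $\overline{\B}$.
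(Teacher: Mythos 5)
Your proof is correct and is essentially the paper's own argument: the paper verifies the mass transport principle by expanding both sides as sums over time-slices of $\B$ and applying stationarity of $\xi$ term by term, which is precisely the computation hidden inside \Cref{lem:mtpforZ} once you feed it your slice transport $w(s,t)=\sum_{u\in V_s(\B)}\sum_{v\in V_t(\B)}g[\overline{\B},u,v]$. The only cosmetic difference is that you invoke \Cref{lem:mtpforZ} as a black box (a reduction the paper itself spells out in its appendix on mass transports, where your $w$ appears verbatim as $\hat{w}$), while the paper performs the shift-and-reindex step inline and handles measurability by citing \Cref{cor:bridge-graphs-embedding} rather than a doubly-rooted analogue of \Cref{lem:class-of-tangible-network-is-measurable}.
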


One may interpret the distribution $\Psq$ as a size-biased version of
the network obtained by starting with $\B$ and selecting the root uniformly from $\B_0$.

\begin{proof}
    By \Cref{cor:bridge-graphs-embedding}, $\overline{\B}$ with marks as specified and any choice of root
    is a random network.
    Therefore, all the quantities in the following calculation are measurable.
    Let $g:\mathcal{G}_{**} \to \R_{\geq 0}$ be given.
    One has
	\begin{align*}
        &\int_{\mathcal{G}_{*}} \sum_{v \in V(\Gr)}
        g[\Gr,o,v]\,\Psq(d[\Gr,o])\\
        &= \frac{1}{\E[\#\B_0]} \E \sum_{y \in
            \B_0}\sum_{v \in V(\B)} g[\overline{\B},(0,y), v]\\
        &= \frac{1}{\E[\#\B_0]} \sum_{y,y' \in S, t \in \Z}\E \left[
            1_{\set{(0,y), (t,y') \in V(\B)}}g[\overline{\B},(0,y),
            (t,y')]\right].
    \end{align*}
    Stationarity on $\Z$ implies the right hand side is equal to
    \begin{align*}
        & \frac{1}{\E[\#\B_0]} \sum_{y,y' \in S, t \in \Z}\E \left[
            1_{\set{(-t,y), (0,y') \in V(\B)}}g[\overline{\B},(-t,y),
			(0,y')]\right]\\
        = &\frac{1}{\E[\#\B_0]} \sum_{y,y' \in S, t \in \Z}\E \left[
            1_{\set{(t,y), (0,y') \in V(\B)}}g[\overline{\B},(t,y),
			(0,y')]\right]\\
        = &\frac{1}{\E[\#\B_0]} \E \sum_{y' \in
            \B_0}\sum_{v \in V(\B)} g[\overline{\B},v, (0,y')]\\
      =&\int_{\mathcal{G}_{*}} \sum_{v \in V(\Gr)}
      g[\Gr,v,o]\,\Psq(d[\Gr,o]).
	\end{align*}
    Thus $\Psq$ is the distribution of a unimodular network.
\end{proof}

The view of $\Psq$ as a size-biased version of a network is formalized in the following.

\begin{proposition}\label{prop:size-biased-dist}
    Let $\bs{o}$ be, conditionally on $V_0(\B)$,
    uniformly distributed on $V_0(\B)$ and independent of $\B$.
    Then under the size-biased measure $\hat \P(E) := \frac{1}{\E[\#\B_0]} \E[\#\B_0 1_E]$ for each $E \in \F$,
    the random network $[\overline{\B}, \bs{o}]$ has the distribution $\Psq$.
\end{proposition}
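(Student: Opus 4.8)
The plan is to compute the law of $[\overline{\B}, \bs{o}]$ under $\hat{\P}$ directly and check that it coincides with the formula \eqref{eq:unimodular-measure} defining $\Psq$. First I would pass to an enlarged probability space carrying both $\B$ and the auxiliary randomness used to select $\bs{o}$, so that $\bs{o}$ is, conditionally on $\B$, uniform on $V_0(\B)$ and independent of $\B$; write $\E'$ for expectation on this space, noting that $\hat{\P}$ extends there by the same size-biasing formula (it agrees with the stated one on $\F$-measurable quantities, since $\B$ and $\#\B_0$ are $\F$-measurable). By \Cref{cor:bridge-graphs-embedding} the map $\omega \mapsto [\overline{\B}, \bs{o}]$ is measurable, so its $\hat{\P}$-law is well defined. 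I would also record that $(0,x^*) \in V(\B)$ always, whence $\#\B_0 = \#V_0(\B) \geq 1$ and, by \Cref{prop:mbeff-is-locally-finite}, $0 < \E[\#\B_0] < \infty$; hence $\hat{\P}$ is a genuine probability measure and no division by zero occurs.

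Then, fixing $A \in \mathcal{B}(\mathcal{G}_*)$, I would expand
\[
    \hat{\P}\bigl([\overline{\B}, \bs{o}] \in A\bigr) = \frac{1}{\E[\#\B_0]}\, \E'\!\left[\#\B_0\, 1_{\set{[\overline{\B}, \bs{o}] \in A}}\right]
\]
using the definition of $\hat{\P}$. Conditioning on $\B$ and using that $\bs{o}$ is uniform on $V_0(\B)$, a set of cardinality $\#\B_0$, the inner conditional expectation equals $\frac{1}{\#\B_0}\sum_{w \in V_0(\B)} 1_{\set{[\overline{\B}, w] \in A}}$. Multiplying by the $\B$-measurable size-biasing weight $\#\B_0$ (which legitimately factors out of the conditional expectation) cancels this factor exactly, leaving the $\B$-measurable quantity $\sum_{w \in V_0(\B)} 1_{\set{[\overline{\B}, w] \in A}}$. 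Taking the outer expectation recovers $\frac{1}{\E[\#\B_0]}\E\bigl[\sum_{w \in V_0(\B)} 1_{\set{[\overline{\B}, w]\in A}}\bigr] = \Psq(A)$, as desired.

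The computation is routine, and I do not expect a genuine obstacle; the only real care point is the bookkeeping around the auxiliary randomness. One must be explicit that $\bs{o}$ lives on an enlarged space, that $\hat{\P}$ is understood there, and that the crucial cancellation is the interplay of the size-biasing weight $\#\B_0$ with the uniform-averaging factor $1/\#\B_0$ produced by conditioning. Since $A \in \mathcal{B}(\mathcal{G}_*)$ was arbitrary, equality of the two measures on all Borel sets follows, completing the proof.
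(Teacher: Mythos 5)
Your proof is correct and takes essentially the same route as the paper's: both expand $\hat\P([\overline{\B},\bs{o}]\in A)$, condition so that the uniform choice of $\bs{o}$ produces a factor $1/\#\B_0$ that exactly cancels the size-biasing weight $\#\B_0$, and recover $\Psq(A)$. The only cosmetic difference is that the paper conditions on the countably many possible values of $V_0(\B)$ and sums explicitly, whereas you phrase the same cancellation via abstract conditional expectation given $\B$ on an enlarged space.
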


\begin{proof}
    In what follows, $V$ ranges over the sets for which $\P(V_0(\B) = V) > 0$, of which
    there are at most countably many because $\B_0$ is a.s.\ a finite subset of the countable $S$.
    For any $A \in \mathcal{B}(\mathcal{G}_*)$ and with $C := \E[\#\B_0]$,
    \begin{align*}
        &\hat \P([\overline{\B}, \bs{o}] \in A) \\
        &= \frac{1}{C} \E[\# \B_0 1_{\set{[\overline{\B}, \bs{o}] \in A}}]\\
        &= \frac{1}{C}\sum_{V} \left\vert V\right\vert \P(V_0(\B)=V) \P([\overline{\B}, \bs{o}] \in A \mid V_0(\B) = V)\\
        &= \frac{1}{C}\sum_{V} \sum_{v\in V} \left\vert V\right\vert \P(V_0(\B)=V) 
        \P(\bs{o} = v, [\overline{\B}, v] \in A \mid V_0(\B) = V)
    \end{align*}
    which, by the conditional independence of $\bs{o}$ and $\B$, is
    \begin{align*}
        &= \frac{1}{C}\sum_{V} \sum_{v \in V}\left\vert V\right\vert \P(V_0(\B)=V) 
        \frac{1}{\left\vert V\right\vert}\P([\overline{\B}, v] \in A \mid V_0(\B) = V)\\
        &= \frac{1}{C}\E\left[\sum_{V} \sum_{v \in V} 
            1_{\set{[\overline{\B}, v] \in A, V_0(\B) = V}}\right]\\
        &= \Psq(A)
    \end{align*}
    as claimed.
\end{proof}

\subsection{I/F Component Properties}\label{sec:i-f-component-properties}
For any measurable event $A\subseteq \mathcal{G}_*$ in the $\sigma$-algebra of 
\defn{root-invariant} events, i.e., such that if $[\Gr,o] \in A$ then $[\Gr,v] \in A$ for
all $v \in V(\Gr)$, one has
\begin{align*}
    \Psq(A) 
    = \frac{1}{\E[\#\B_0]}\E\left[\sum_{w \in
            V_0(\B)} 1_{\set{[\overline{\B}, w] \in A}} \right]
    = \frac{1}{\E[\#\B_0]}\E\left[\#\B_0
        1_{\set{[\overline{\B} , (0,x^*)] \in A}} \right].
\end{align*}
This immediately gives the following.

\begin{lemma}\label{lem:same-root-inv-events-measure-zero}
    One has that $\Psq$ and $\P([\overline{\B},(0,x^*)] \in \cdot)$
    have the same root-invariant sets of measure $0$ or $1$. \qed{}
\end{lemma}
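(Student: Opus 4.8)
The plan is to read everything off from the identity recorded immediately before the statement: for a root-invariant $A$ one has $\Psq(A) = \frac{1}{\E[\#\B_0]}\E[\#\B_0\, 1_{\set{[\overline{\B},(0,x^*)] \in A}}]$, the reduction of the sum over $w \in V_0(\B)$ to a single indicator being exactly root-invariance applied to the finitely many re-rootings of one and the same network $\overline{\B}$. The only extra ingredient is the elementary observation that $\#\B_0 \geq 1$ almost surely, since the path $F^{(0,x^*)}$ always places $x^*$ into $\B_0$.

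First I would dispose of the null sets. Abbreviating $W := \#\B_0$ and $Y := 1_{\set{[\overline{\B},(0,x^*)] \in A}}$, the identity reads $\Psq(A) = \E[WY]/\E[W]$, while by definition $\P([\overline{\B},(0,x^*)] \in A) = \E[Y]$. Because $W \geq 1$ and $WY \geq 0$, and $0 < \E[W] < \infty$ (finiteness from \Cref{prop:mbeff-is-locally-finite}), the ratio $\Psq(A)$ vanishes precisely when $WY = 0$ almost surely; and since $W$ is strictly positive, this is in turn equivalent to $Y = 0$ almost surely, i.e.\ to $\P([\overline{\B},(0,x^*)] \in A) = 0$. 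Hence the two laws share exactly the same root-invariant null sets.

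Next I would get the full-measure case by passing to complements. The small point to verify is that $A^c$ is again root-invariant whenever $A$ is: if $[\Gr,o] \in A^c$, then root-invariance of $A$ forbids any re-rooting $[\Gr,v]$ from lying in $A$, so every re-rooting lands in $A^c$. Since both $\Psq$ and $\P([\overline{\B},(0,x^*)] \in \cdot)$ are probability measures, the chain $\Psq(A) = 1 \iff \Psq(A^c) = 0 \iff \P([\overline{\B},(0,x^*)] \in A^c) = 0 \iff \P([\overline{\B},(0,x^*)] \in A) = 1$ completes the claim, and together with the null-set equivalence this shows the two measures agree (with the same value) on every root-invariant set of measure $0$ or $1$.

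I expect no genuine obstacle: this is purely a positivity-of-weight argument for the size-biasing weight $W = \#\B_0$. The one place demanding a word of care is that $W$ is almost surely bounded below by $1$ rather than merely nonnegative—this is precisely what prevents a positive-mass event from degenerating to a null set under the size-biasing $\Psq$, and it is immediate from $x^* \in \B_0$.
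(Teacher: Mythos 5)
Your proposal is correct and takes essentially the same approach as the paper, which treats the lemma as an immediate consequence of the identity $\Psq(A) = \E\left[\#\B_0\, 1_{\set{[\overline{\B},(0,x^*)]\in A}}\right]/\E[\#\B_0]$ displayed just before the statement. Your write-up simply makes explicit the details the paper leaves implicit: that $\#\B_0 \geq 1$ a.s.\ (so the size-biasing weight cannot annihilate a positive-probability event), that $0 < \E[\#\B_0] < \infty$, and that complements of root-invariant events are root-invariant.
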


Next, a vertex-shift that is designed to follow the arrows in $\B$ is defined.
It plays the same role as $\fnext$ but is defined for all networks.
From now on, let $\Phi$ denote the \defn{follow vertex-shift} defined on any network $\Gr$ for each $u \in V(\Gr)$
by $\Phi_{\Gr}(u):=v$ if either:
\begin{enumerate}
    \item there is a unique outgoing edge from $u$ and this edge terminates at $v$, or
    \item $u$ is in state $x^*$ and there is a unique outgoing edge from $u$
        that does not terminate at a vertex in state $x^*$, and this edge terminates at $v$.
\end{enumerate}
If neither of the two conditions above is met for any $v \in V(\Gr)$, define $\Phi_{\Gr}(u):=u$ for concreteness.
Here a vertex is considered to be in a state $y \in S$ when the first
component of its mark is $y$ (recall that a vertex $(t,y) \in V(\B)$ is marked by $(y, \xi_t)$).
The second clause in the definition of $\Phi$ is there because of the presence
of the spine in $\overline{\B}$, so that if the root is in state $x^*$
the vertex-shift will choose to follow the arrow in $\B$ instead of following the arrow to the next element of the spine,
unless the two coincide.
By construction, $\Phi_{\overline{\B}}(t,x) = \fnext(t,x)$ for all $(t,x) \in V(\overline{\B})$.

The event that all $\Phi$-components of a network are of class I/F
is root-invariant, and moreover it has $\P([\overline{\B}, (0,x^*)] \in \cdot)$-probability one
because the $\Phi$-graph of $\overline{\B}$ is $\B$ itself,
the $\Phi$-components of $\overline{\B}$ are the components of $\B$, and the $\Phi$-foils
of $\overline{\B}$ are subsets of the sets $\Fam{V_t(\B)}_{t \in \Z}$, which are finite.
Hence $\Psq$ is concentrated on the set of networks having only $\Phi$-components of I/F class.
It follows that any a.s.\ root-invariant properties that follow from $\Psq$ being unimodular and having I/F components
automatically apply to $\P([\overline{\B}, (0,x^*)] \in \cdot)$ as well.
Such properties will be referred to as \defn{I/F component properties} and are
explored in \Cref{sec:bi-recurrence,sec:other-i-f-properties}.

\subsubsection{Bi-recurrent Paths}\label{sec:bi-recurrence}

This section studies bi-infinite paths in $\G$ and identifies special bi-infinite paths
that have a certain recurrence property backwards in time.
Firstly, it is possible to have multiple bi-infinite paths in $\G$ because $\G$ is disconnected.
\begin{example}
    Consider the case where $S := \set{1,2}$
    and $\hgen$ and $\Fam{\xi_{t}}_{t \in \Z}$ are chosen so that the transition $(t,1) \to (t+1,2)$
    occurs if and only if $(t,2) \to (t+1,1)$ occurs.
    In this case $\G$ has two components a.s.
    Each component is itself a bi-infinite path.
\end{example}

Moreover, even when $\G$ is connected, it it still possible to have multiple bi-infinite paths in $\G$.

\begin{example}\label{ex:deterministic-falling}
    Consider the case of $S:=\N$ with fully independent transitions.
    Let the transition matrix $P$ be determined as follows.
    In state $0$, transition to a $\mathrm{Geom}(1/2)$ random variable, 
    and from any other $n \neq 0$, deterministically transition from $n$ to $n-1$.
    In this case, from every vertex $(s,x) \in \Z\times S$, 
    there is a bi-infinite path $\Fam{t,X_t}_{t \in \Z}$ in $\G$
    for which $X_{s-k} = k+x$ for all $k \geq 0$.
    Thus there are infinitely many bi-infinite paths, despite the fact that in this case
    $\G$ is an \eft{}, which follows from~\Cref{prop:teaser-indep-transitions-components}.
\end{example}

In~\Cref{ex:deterministic-falling}, even though $\G$ is connected, $\G$ has infinitely many bi-infinite paths.
However, amongst the bi-infinite paths,
there is one special bi-infinite path.
The special path is the unique bi-infinite path that visits every state infinitely often
\emph{in the past}.
It turns out that this is the correct kind of path to look for in general.
A bi-infinite sequence $\Fam{x_t}_{t \in \Z}$ in $S$ is called \defn{bi-recurrent for state $x$} 
if $\set{t\in \Z : x_t = x}$ is unbounded above and below.
If $\Fam{x_t}_{t \in \Z}$ is bi-recurrent for every $x\in S$, it is simply called \defn{bi-recurrent}.
A state path $\Fam{X_t}_{t \in I}$ in $\G$ is called \defn{bi-reccurent (for state $x$)}
if a.s.\ its trajectory is bi-recurrent (for state $x$).
Recall that $\Phi$ denotes the follow vertex-shift.
The existence of bi-infinite paths in $\Phi$-components of a network
is an I/F property, and hence one has the following.

\begin{proposition}\label{prop:existence-of-bi-rec-paths}
    It holds that $\B$ has a unique bi-infinite path in each component a.s.
    The corresponding state paths are bi-recurrent for $x^*$ and these are the only state paths
    in all of $\G$ that are bi-recurrent for $x^*$.
    Moreover, for each $y \in S$, these state paths either a.s.\ never visit $y$, or are bi-recurrent for $y$.
\end{proposition}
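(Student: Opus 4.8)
The plan is to read all four assertions off the I/F structure of the unimodular network $\Psq$ from \Cref{thm:main-thm-mbeft-is-unimodularizable}, each time transferring a root-invariant conclusion back to $\B$ via \Cref{lem:same-root-inv-events-measure-zero}. Recall that the $\Phi$-graph of $\overline{\B}$ is $\B$, that its $\Phi$-components are exactly the components of $\B$, and that $\Psq$ is concentrated on networks all of whose $\Phi$-components are of class I/F. The existence and uniqueness of a bi-infinite path in each component is then immediate from \Cref{thm:foil-classification-theorem}: an I/F component contains no cycle and exactly one bi-infinite path. Since this is a root-invariant property holding $\Psq$-a.s., it holds $\P$-a.s.\ for $\B$, which is the first claim.

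For the forward half of bi-recurrence for $x^*$ I would argue directly in $\G$. Read forward from any of its vertices $(t,\beta_t)$, the bi-infinite path follows $\Phi=\fnext$ and so coincides with the state path $F^{(t,\beta_t)}$. Because $(t,\beta_t)\in V(\B)$, some bridge path $F^{(s,x^*)}$ with $s\le t$ passes through $(t,\beta_t)$, whence $F^{(s,x^*)}$ and the path satisfy the same recursion with the same value at time $t$ and thus agree for all times $\ge t$; as $x^*$ is recurrent, $F^{(s,x^*)}$ visits $x^*$ infinitely often, hence so does the bi-infinite path. For the backward half I would invoke the no infinite/finite inclusion lemma (\Cref{lem:no-infinite-finite-inclusion}) with the covariant partition $\Pi$ into $\Phi$-components and the covariant subset $C$ consisting of the vertices that lie on the bi-infinite path of their component, are in state $x^*$, and have no earlier $x^*$-vertex on that path. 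If a path were not backward-recurrent for $x^*$, then (being forward-recurrent, hence visiting $x^*$) its set of $x^*$-times would be nonempty and bounded below, contributing exactly one such minimal vertex; its $\Phi$-component is infinite yet meets $C$ in a single point, contradicting \Cref{lem:no-infinite-finite-inclusion}. Thus backward-recurrence holds $\Psq$-a.s., and being root-invariant it transfers to $\B$.

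The third claim is a containment argument in $\G$. If a state path $\Fam{\beta_t}_{t\in\Z}$ is bi-recurrent for $x^*$, then for each $s$ I can choose $u<s$ with $\beta_u=x^*$; since $\beta$ obeys the same recursion as $F^{(u,x^*)}$ from time $u$ on, $F^{(u,x^*)}_s=\beta_s$, so $(s,\beta_s)\in V(\B)$. Hence $\beta$ is entirely a bi-infinite path in $\B$, and by the first claim it is the unique bi-infinite path of its component, while conversely those paths are bi-recurrent for $x^*$ by the second claim. For the last claim, fix $y$ and run \Cref{lem:no-infinite-finite-inclusion} symmetrically in time: with $\Pi$ the partition into $\Phi$-components, let $C^-$ (resp.\ $C^+$) be the vertices on a bi-infinite path that are in state $y$ and have no earlier (resp.\ later) $y$-vertex on that path. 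A path whose $y$-times are nonempty but bounded below (resp.\ above) places a single extremal $y$-vertex inside an infinite $\Phi$-component, again contradicting the lemma, so a.s.\ each bi-infinite path either never visits $y$ or visits it at times unbounded in both directions, i.e.\ is bi-recurrent for $y$.

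I expect the backward-recurrence step to be the crux: the forward direction is essentially free from the recurrence of $x^*$, and the time-asymmetry must be broken using unimodularity. The key point is that failure of backward-recurrence manifests as a single extremal vertex inside an infinite $\Phi$-component, which is precisely the configuration forbidden by \Cref{lem:no-infinite-finite-inclusion}; the same device, applied in both time directions, also delivers the $y$-dichotomy. The remaining work is bookkeeping: verifying that the subsets $C$, $C^\pm$ and the partition into $\Phi$-components are genuinely covariant and measurable, and that each conclusion is root-invariant so that \Cref{lem:same-root-inv-events-measure-zero} carries it from $\Psq$ to the law of $[\overline{\B},(0,x^*)]$ and hence to $\B$ itself.
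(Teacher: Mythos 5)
Your proposal is correct and takes essentially the same route as the paper: \Cref{thm:foil-classification-theorem} for existence and uniqueness of the bi-infinite path in each component, \Cref{lem:no-infinite-finite-inclusion} applied to extremal (first/last) visits on those paths for the never-visits-or-bi-recurrent dichotomy, transfer between $\Psq$ and the law of $[\overline{\B},(0,x^*)]$ by root-invariance, and the containment argument showing any bi-recurrent-for-$x^*$ path must lie in $\B$. Your explicit forward-recurrence step (the path read forward is a tail of some bridge path $F^{(s,x^*)}$, which revisits $x^*$ infinitely often by recurrence) is precisely the justification the paper leaves implicit in its terse assertion that the unique bi-infinite path in each component ``at least hits $x^*$.''
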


\begin{proof}
    By \Cref{thm:foil-classification-theorem},
    $\Psq$-a.e.\ network has a unique bi-infinite path in each $\Phi$-component, where $\Phi$ is the follow vertex-shift.
    But having a unique bi-infinite path in each $\Phi$-component is a root-invariant event,
    and hence $\P$-a.s.\ $\overline{\B}$ has a unique bi-infinite path in each $\Phi$-component.
    Since the $\Phi$-components of $\overline{\B}$ are the components of $\B$,
    $\P$-a.s.\ every component of $\B$ contains a unique bi-infinite path.

    Let $\Pi$ be the covariant partition of $\Phi$-components.
    Define the covariant subset $C$ on a network $\Gr$ by letting $C_\Gr$ be the subset of vertices of $\Gr$
    that are either the first or last visit to a given state $y \in S$, if they exist, on the unique bi-infinite path in their $\Phi$-component of $\Gr$, if such a path exists.
    The no infinite/finite inclusion lemma, \Cref{lem:no-infinite-finite-inclusion},
    implies that $\Psq$ is concentrated on the set of networks $\Gr$
    with no first or last visit to $y$ on the unique bi-infinite paths in each $\Phi$-component of $\Gr$.
    This property is root-invariant and hence a.s.\ the state paths corresponding to the unique bi-infinite paths in each component of $\B$
    either do not visit state $y$ or are bi-recurrent for $y$.
    Taking a countable union over $y \in S$ shows this property holds simultaneously for all $y \in S$.
    Since the unique bi-infinite path in each component of $\B$ at least hits $x^*$, 
    one may at least conclude the paths are bi-recurrent for $x^*$.
    Finally, there cannot be any other bi-recurrent state paths for $x^*$ in $\G$ because, by definition,
    a bi-recurrent state path in $\G$ will lie in $\B$ since it visits $x^*$
    at arbitrarily large negative times.
\end{proof}

The next result applies~\Cref{prop:existence-of-bi-rec-paths} to the nicest case,
where $\G$ is a tree. 

\begin{corollary}\label{cor:ergodic-meft-unique-bi-recurrent}
    Suppose that $\G$ is an \eft{}.
    Then $\G$ contains a unique (up to measure zero modifications) state path $\Fam{\beta_t}_{t \in \Z}$ that is bi-recurrent for $x^*$.
    Moreover, $\Fam{\beta_t}_{t \in \Z}$ is shift-covariant, stationary, and
    for each $t \in \Z$ one has that $\beta_t$ is measurable with respect to $\sigma(\xi_s : s < t)$.
    Additionally, $\Fam{\beta_t}_{t \in \Z}$ is bi-recurrent for every $x \in S$ that is positive recurrent.
\end{corollary}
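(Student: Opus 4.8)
The plan is to assemble the statement from \Cref{lem:G-eft-implies-B-eft} and \Cref{prop:existence-of-bi-rec-paths}, and then to upgrade the resulting path by three successive covariance and measurability arguments. First, since $\G$ is an \eft{}, \Cref{lem:G-eft-implies-B-eft} gives that $\B$ is an \eft{}, i.e.\ connected, so that \Cref{prop:existence-of-bi-rec-paths} produces a single bi-infinite path in the unique component of $\B$; its state path $\Fam{\beta_t}_{t\in\Z}$ is bi-recurrent for $x^*$ and is the only such path in all of $\G$, giving existence and uniqueness up to null sets. Shift-covariance is then immediate from uniqueness: $\B$ is a shift-covariant $\xi$-measurable subgraph, so translating $\xi$ by $s$ translates $\B$, and hence its unique bi-infinite path, by $-s$; by uniqueness this translate is the path built from the shifted sequence, which is exactly shift-covariance. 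Stationarity follows at once, since $\beta_u(\theta_s\xi)=\beta_{u+s}(\xi)$ together with $\theta_s\xi\eqD\xi$ shows $\Fam{\beta_{u+s}}_{u}\eqD\Fam{\beta_u}_{u}$ for every $s$.

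The substantive work is the measurability claim that $\beta_t$ is $\sigma(\xi_s:s<t)$-measurable, which is really a coupling-from-the-past statement. I would prove that, almost surely, $F^{(s,x^*)}_t=\beta_t$ for all sufficiently negative $s$, so that $\beta_t=\lim_{s\to-\infty}F^{(s,x^*)}_t$ is an eventually-constant limit of the quantities $F^{(s,x^*)}_t$, each a function of $\xi_s,\dots,\xi_{t-1}$ and hence $\sigma(\xi_r:r<t)$-measurable. Since $\B_t$ is finite by \Cref{prop:mbeff-is-locally-finite}, it suffices to show that for each off-path state $y\neq\beta_t$ only finitely many $(s,x^*)$ with $s<t$ satisfy $F^{(s,x^*)}_t=y$, i.e.\ that each vertex $(t,y)$ off the bi-infinite path has only finitely many descendants in $\B$. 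This mortality of off-path vertices is the crux: if $(t,y)$ had infinitely many descendants, then, $\B$ being locally finite, its descendant subtree would be infinite and finitely branching, so by K\"onig's lemma it would contain a ray running back to $-\infty$; concatenating this ray with the (infinite) forward orbit of $(t,y)$ yields a bi-infinite path through $(t,y)\notin\beta$, contradicting the uniqueness supplied by \Cref{prop:existence-of-bi-rec-paths}. Summing the finitely many descendants over the finitely many off-path states in $\B_t$ shows all but finitely many $(s,x^*)$ flow through $(t,\beta_t)$, giving the claim.

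For the final assertion, fix a positive recurrent $y$; the results already proved for $x^*$ apply verbatim to the bridge graph $\B(y)$, producing a unique bi-infinite path with bi-recurrent-for-$y$ state path $\beta^{(y)}$, and I would show $\beta=\beta^{(y)}$. The key geometric fact is that in a connected \eft{} any two forward orbits coalesce: in the undirected tree the geodesic between two vertices can have no valley because each vertex has a unique outgoing edge, so it has a single peak, at which the two forward orbits meet. Applying this to $\beta$ and $\beta^{(y)}$ gives $\beta_r=\beta^{(y)}_r$ for all large $r$; since $\beta$ is bi-recurrent for $x^*$ there is an arbitrarily large $r$ with $\beta_r=x^*$, whence $\beta^{(y)}$ visits $x^*$. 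By the dichotomy of \Cref{prop:existence-of-bi-rec-paths} applied to $\B(y)$, $\beta^{(y)}$ is then bi-recurrent for $x^*$, so it equals $\beta$ by uniqueness of the bi-recurrent-for-$x^*$ path, and therefore $\beta$ inherits bi-recurrence for $y$.

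I expect the measurability step to be the main obstacle: a priori $\beta$ is singled out using the entire two-sided trajectory, and rewriting $\beta_t$ as a past-measurable limit requires precisely the mortality-of-off-path-vertices input. The positive-recurrence step is by comparison a short uniqueness argument once the forward-coalescence fact is in hand.
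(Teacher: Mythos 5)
Your proposal is correct, and its core is the same as the paper's argument: existence and uniqueness come from \Cref{prop:existence-of-bi-rec-paths} together with $\B$ being an \eft{} (via \Cref{lem:G-eft-implies-B-eft}), and the measurability claim is handled exactly as in the paper, by realizing $\beta_t$ as the eventually-constant limit $\lim_{s\to-\infty}F^{(s,x^*)}_t$ and using local finiteness of $\B$ (\Cref{prop:mbeff-is-locally-finite}) plus K\"onig's lemma to show that any off-path vertex with infinitely many descendants would force a second bi-infinite path, contradicting uniqueness. Your phrasing of this step (mortality of each off-path vertex) is if anything slightly cleaner than the paper's (which argues that non-convergence of the limit forces two vertices with infinite descendant trees), but it is the same idea. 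The only genuine divergence is the final assertion about positive recurrent states: the paper lets $Y$ be the unique bi-recurrent path for $y$, notes $\beta$ and $Y$ merge since $\G$ is connected, and then invokes stationarity to forbid a \emph{first} meeting time, concluding $\beta_t = Y_t$ for all $t$; you instead prove a deterministic coalescence lemma for \efts{} (the geodesic between two vertices has no valley, hence a single peak where forward orbits meet), deduce that $\beta^{(y)}$ visits $x^*$, and then upgrade this to bi-recurrence for $x^*$ via the dichotomy in \Cref{prop:existence-of-bi-rec-paths}, so that uniqueness gives $\beta^{(y)}=\beta$. Both routes are valid: yours trades the paper's soft stationarity argument for a structural graph lemma plus the dichotomy, which is marginally longer but makes the pathwise geometry explicit and avoids reasoning about the joint law of the two paths.
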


\begin{proof}
    \Cref{prop:existence-of-bi-rec-paths} 
    shows that a.s.\ there is a unique bi-infinite path 
    in each component of $\B$, and the corresponding state paths are bi-recurrent for $x^*$.
    Since $\G$ a.s.\ has only one component, $\B$ does too.
    The second part of~\Cref{prop:existence-of-bi-rec-paths} then implies
    the bi-recurrent state path for $x^*$ in $\B$ is the only bi-recurrent state path for $x^*$ in $\G$.
    One would like to define $\Fam{\beta_t}_{t \in \Z}$ to be the unique bi-recurrent state path for $x^*$ in $\G$.
    However, in that case, $\Fam{\beta_t}_{t \in \Z}$ would only be defined a.s.
    For concreteness,
    define $\beta_t$ for each $t \in \Z$ by letting 
    $\beta_t := \lim_{s \to -\infty} F^{(s,x^*)}_t$ on the event that the limit exists, and $\beta_t := x^*$ otherwise.
    On the a.s.\ event $E$ that $\B$ is connected, $\#\B_t < \infty$ for all $t \in \Z$, and there is a unique bi-infinite path in $\B$, 
    one has that $\Fam{t,\beta_t}_{t \in \Z}$ coincides with the unique bi-infinite path in $\B$. 
    This is because if, for some $t \in \Z$, $\lim_{s \to -\infty} F_t^{(s,x^*)}$ does not exist, then
    either $\#\B_t = \infty$, or there exist two states $x,y \in S$ such that $(t,x)$ and $(t,y)$
    have (necessarily disjoint) locally finite infinite trees of descendants in $\B$.
    The former case is forbidden on $E$, and, in the latter case,
    K\"onig's lemma would imply the existence of two distinct bi-infinite paths in $\B$, which is also forbidden on $E$.
    Thus $\lim_{s \to -\infty} F_t^{(s,x^*)}$ exists for all $t \in \Z$ on the event $E$,
    and, on this event, the unique bi-infinite path in $\B$
    must therefore be $\Fam{t,\beta_t}_{t \in \Z}$.
    The shift-covariance and hence stationarity of $\Fam{\beta_t}_{t \in \Z}$ follows from
    its definition in terms of $F^{(s,x^*)}$ for each $s \in \Z$.
    For each $t \in \Z$, measurability of $\beta_t$ with respect to $\sigma(\xi_s : s < t)$
    also follows from its definition, since each $F^{(r,x^*)}_t$ with $r \leq t$ is $\sigma(\xi_s : s < t)$-measurable.

    Now let $\Fam{Y_t}_{t \in \Z}$ be the unique bi-recurrent state path for some other $y \in S$ that is positive recurrent.
    Since $\G$ is a.s.\ connected, $\Fam{\beta_t}_{t \in \Z}$ and $\Fam{Y_t}_{t \in \Z}$ eventually merge, a.s.
    However, stationarity forbids that there is a first time such that $\beta_t = Y_t$, so it must be that $\beta_t = Y_t$ for all $t \in \Z$.
    Thus $\Fam{\beta_t}_{t \in \Z}$ is bi-recurrent for every $y \in S$ that is positive recurrent.
\end{proof}

\Cref{cor:ergodic-meft-unique-bi-recurrent} shows that, like in the standard CFTP setup,
there is a $\beta_0$ living at time $0$ in $\G$ that is a perfect sample from the stationary distribution of the Markov chain or SRS\@.
However, unlike in the standard CFTP setup, it is not known whether there is an algorithm that can find $\beta_0$
in finite time.

Another consequence of the existence of bi-recurrent paths in $\B$ is that one
can bound the number of components of $\B$.

\begin{corollary}\label{cor:B-has-finitely-many-components}
    The a.s.\ constant number $n$ 
    of components of $\B$ 
    is no larger than $\min\set{ k : \P(\#\B_0 = k)>0} < \infty$.
    In particular, $\B$ has finitely many connected components,
    even if $\G$ has infinitely many components, and if $\P(\#\B_0 = 1) > 0$, 
    then $\B$ is an \eft{}.
\end{corollary}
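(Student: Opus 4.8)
The plan is to count the $n$ components of $\B$ by intersecting each with the time-$0$ slice and comparing the total against $\#\B_0$. First I would record the two structural facts that make the count possible: the number of components of $\B$ is shift-invariant, hence a.s.\ equal to a constant $n$, and $n \geq 1$ since $(0,x^*) \in V(\B)$ makes $\B$ nonempty. The workhorse is \Cref{prop:existence-of-bi-rec-paths}, which guarantees that a.s.\ each of the $n$ components of $\B$ contains a unique bi-infinite path; thus there are exactly $n$ bi-infinite paths in $\B$, one per component.

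The key step is the pointwise bound $\#\B_0 \geq n$ a.s. Since every edge of $\G$ advances time by exactly one unit, each bi-infinite path in $\B$ meets every time slice exactly once, in particular it passes through a single vertex at time $0$. The $n$ bi-infinite paths lie in $n$ distinct components, so their time-$0$ vertices are distinct elements of $V_0(\B)$, whence $\#\B_0 \geq n$. As this holds a.s., no value $k < n$ can satisfy $\P(\#\B_0 = k) > 0$, which is exactly the statement that $\min\set{k : \P(\#\B_0 = k) > 0} \geq n$, i.e.\ $n \leq \min\set{k : \P(\#\B_0 = k) > 0}$.

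For finiteness and the remaining consequences I would invoke \Cref{prop:mbeff-is-locally-finite}: since $\E[\#\B_0] < \infty$ one has $\#\B_0 < \infty$ a.s., so $\set{k : \P(\#\B_0 = k) > 0}$ is a nonempty set of finite nonnegative integers and its minimum $m$ is finite; combined with the bound above this gives $n \leq m < \infty$, so $\B$ has finitely many components even when $\G$ has infinitely many. Finally, if $\P(\#\B_0 = 1) > 0$ then $m = 1$, forcing $n \leq 1$, and together with $n \geq 1$ this yields $n = 1$, so $\B$ is connected and hence an \eft{}.

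The step I expect to require the most care is the pointwise bound $\#\B_0 \geq n$, where one must argue that distinct components genuinely contribute distinct points of $\B_0$ and that no component's bi-infinite path misses the time-$0$ slice. Both points follow from the structure of $\G$ — edges strictly increase time, so a bi-infinite path meets each time exactly once, and vertices in different components are automatically distinct — but it is worth spelling them out rather than leaving them implicit, since the whole bound rests on them.
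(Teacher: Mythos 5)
Your proposal is correct and follows essentially the same route as the paper's proof: shift-invariance gives the a.s.\ constant $n$, \Cref{prop:existence-of-bi-rec-paths} supplies one bi-infinite path per component, these paths meet $V_0(\B)$ in distinct vertices to give $n \leq \#\B_0$ a.s., and the bound on $n$ follows. Your additional care in justifying that each bi-infinite path actually hits the time-$0$ slice, and in invoking \Cref{prop:mbeff-is-locally-finite} for finiteness of the minimum, only makes explicit what the paper leaves implicit.
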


\begin{proof}
    The number of components of $\B$ is shift-invariant and hence a.s.\ constant.
    Each component of $\B$ contains a bi-recurrent path by \Cref{prop:existence-of-bi-rec-paths}.
    Each bi-recurrent path intersects $V_0(\B)$ in a different element since they are in different components of $\B$.
    It follows that $n \leq \#\B_0$ a.s.
    If $\P(\# \B_0 = k) > 0$ for some $k$, then it follows that $n \leq k$.
\end{proof}

The deterministic cycle on $n$ states shows that the bound in \Cref{cor:B-has-finitely-many-components}
can be achieved for each $n$.
In general, any bi-infinite stationary process on $S$ (or any countable set) must be bi-recurrent.

\begin{proposition}\label{prop:stationary-implies-birec}
    Suppose that $\Fam{X_t}_{t \in \Z}$ is a stationary process taking values in $S$.
    Then a.s.\ $\Fam{X_t}_{t \in \Z}$ is bi-recurrent for every $x \in \set{X_t}_{t \in \Z}$.
\end{proposition}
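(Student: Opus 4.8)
The plan is to fix a single state $x \in S$ and show that, almost surely, whenever $x$ appears in the trajectory the visit set $\set{t \in \Z : X_t = x}$ is unbounded both above and below; the proposition then follows by intersecting over the countably many $x \in S$.

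First I would isolate a deterministic finiteness fact. Call a time $t$ a \emph{last occurrence} of $x$ if $X_t = x$ but $X_s \neq x$ for all $s > t$, and set $D_x := \set{t \in \Z : t \text{ is a last occurrence of } x}$. Then $\#D_x \leq 1$ always: if $t < t'$ were both last occurrences, then $X_{t'} = x$ with $t' > t$ contradicts $t$ being a last occurrence. In particular $\E[\#D_x] \leq 1 < \infty$.

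Next I would bring in stationarity. Because $\Fam{X_t}_{t \in \Z}$ is stationary, the quantity $p := \P(t \in D_x)$ is independent of $t$, so by Tonelli $\E[\#D_x] = \sum_{t \in \Z}\P(t \in D_x) = \sum_{t \in \Z} p \in \set{0, \infty}$. Since this expectation is at most $1$, necessarily $p = 0$, whence $\P(D_x \neq \emptyset) \leq \sum_{t \in \Z} p = 0$. Thus a.s.\ $x$ has no last occurrence, i.e.\ every visit to $x$ is succeeded by a strictly later one; iterating shows $\set{t : X_t = x}$ is unbounded above whenever it is nonempty. Running the identical argument with \emph{first} occurrences (replacing $s > t$ by $s < t$) yields unboundedness below. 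This step is really the mass transport principle for $\Z$, \Cref{lem:mtpforZ}, in disguise: one unit of mass placed in a shift-covariant way cannot have finite total expected mass unless it is placed with probability zero.

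Finally I would intersect these almost-sure events over $x \in S$. As $S$ is countable, a single null set is discarded, and on its complement every state that is visited is visited unboundedly in both time directions, which is exactly bi-recurrence for every $x \in \set{X_t}_{t \in \Z}$. I do not expect a genuine obstacle here; the only delicate points are the logical translation of ``$D_x = \emptyset$'' into ``no last occurrence, hence unbounded above,'' and confirming that the countable intersection leaves a single exceptional null event valid simultaneously for all states.
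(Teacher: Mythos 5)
Your proposal is correct and is essentially the paper's own argument, made rigorous: the paper's one-line claim that ``stationarity forbids a first or last visit since such an occurrence would have to be equally likely to happen at all times'' is exactly your observation that the disjoint events $\set{t \in D_x}$ share a common probability $p$, forcing $p=0$ and hence $\P(D_x \neq \emptyset)=0$ by a union bound, with the countable intersection over $S$ concluding just as in the paper. The expectation/Tonelli formalization and the remark about \Cref{lem:mtpforZ} are faithful elaborations rather than a different route.
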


\begin{proof}
    For each $x \in S$, 
    stationarity forbids that there is a first or last visit of $\Fam{X_t}_{t \in \Z}$ to $x$
    since such an occurrence would have to be equally likely to happen at all times $t \in \Z$.
    Thus, a.s.\ either $x \notin \set{X_t}_{t \in \Z}$ or $\set{t \in \Z: X_t = x}$ must be 
    unbounded both above and below.
    The countability of $S$ finishes the claim.
\end{proof}

The remainder of the section specializes to the Markovian setting again.
In the Markovian setting, bi-recurrence is actually equivalent to
stationarity in the irreducible, aperiodic, positive recurrent case.

\begin{theorem}\label{thm:stationary-iff-birecurrent}
    Suppose that $P$ is irreducible, aperiodic, and positive recurrent,
    and that $\Fam{X_t}_{t \in \Z}$ is a Markov chain with transition matrix $P$.
    Then $\Fam{X_t}_{t \in \Z}$ is stationary if and only if it is bi-recurrent for any (and hence every) state.
\end{theorem}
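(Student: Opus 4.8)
The plan is to prove the two implications separately, with the forward direction being a quick consequence of earlier results and the reverse direction being the substantial one, handled by embedding the chain into a Doeblin graph and invoking uniqueness of the bi-recurrent path.

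For the forward direction, suppose $\Fam{X_t}_{t\in\Z}$ is stationary. Since $P$ is irreducible and positive recurrent, its invariant distribution $\pi$ has full support, and the forward chain $\Fam{X_t}_{t \geq 0}$, being irreducible recurrent, visits every state a.s.; hence $\set{X_t}_{t\in\Z} = S$ a.s. Then \Cref{prop:stationary-implies-birec} applies directly and gives that $\Fam{X_t}_{t\in\Z}$ is bi-recurrent for every $x \in S$, in particular for any prescribed state. Note aperiodicity is not needed for this direction.

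For the reverse direction, suppose $\Fam{X_t}_{t\in\Z}$ is bi-recurrent for some state; since all states are positive recurrent, I may call this state $x^*$. Because both stationarity and bi-recurrence depend only on the law of $\Fam{X_t}_{t\in\Z}$, I am free to realize $P$ through any convenient driving mechanism. First I would choose a Doeblin graph $\G'$ with fully independent transitions realizing $P$ (for instance, taking the driving variable at each time to be an independent family of uniforms indexed by $S$, one governing the transition out of each state). By \Cref{prop:teaser-indep-transitions-components}, the hypotheses that $P$ is irreducible, aperiodic, and positive recurrent make $\G'$ an \eft{}; here aperiodicity is essential, as it is what forces a single component. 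Next, by the embedding argument of \Cref{thm:embed-into-general-meff} via the Kolmogorov extension theorem, I obtain $\Fam{X_t'}_{t\in\Z} \sim \Fam{X_t}_{t\in\Z}$ realized as a state path in $\G'$.

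I would then apply the uniqueness of bi-recurrent paths. Since $\Fam{X_t'}_{t\in\Z} \sim \Fam{X_t}_{t\in\Z}$, it is a.s.\ bi-recurrent for $x^*$. By \Cref{lem:G-eft-implies-B-eft} the bridge graph of $\G'$ is connected, so by \Cref{prop:existence-of-bi-rec-paths} there is, a.s., a unique state path in all of $\G'$ that is bi-recurrent for $x^*$, namely the unique bi-infinite path of the bridge graph; by \Cref{cor:ergodic-meft-unique-bi-recurrent} this path $\Fam{\beta_t}_{t\in\Z}$ is stationary. Consequently $\Fam{X_t'}_{t\in\Z}$ must coincide a.s.\ with $\Fam{\beta_t}_{t\in\Z}$, so $\Fam{X_t'}_{t\in\Z}$ is stationary, and hence so is $\Fam{X_t}_{t\in\Z}$. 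Combining the two directions then shows that being bi-recurrent for one state is equivalent to being bi-recurrent for every state, both being equivalent to stationarity.

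I expect the main obstacle to be the reverse direction's reliance on choosing the right ambient Doeblin graph: one must ensure the graph is simultaneously Markovian, so that the embedding of \Cref{thm:embed-into-general-meff} applies, and an \eft{}, so that \Cref{cor:ergodic-meft-unique-bi-recurrent} yields a \emph{unique} and stationary bi-recurrent path, which is exactly what fully independent transitions together with aperiodicity provide. The remaining care is to verify that bi-recurrence for $x^*$ transfers across the distributional identity $\Fam{X_t'}_{t\in\Z}\sim\Fam{X_t}_{t\in\Z}$ and that the uniqueness clause of \Cref{prop:existence-of-bi-rec-paths} indeed pins down the embedded path as $\Fam{\beta_t}_{t\in\Z}$ rather than merely constraining it.
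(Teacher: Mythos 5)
Your proposal is correct and follows essentially the same route as the paper's proof: embed the chain as a state path in a Doeblin graph with fully independent transitions via \Cref{thm:embed-into-general-meff}, use \Cref{prop:teaser-indep-transitions-components} to get an \eft{}, invoke \Cref{cor:ergodic-meft-unique-bi-recurrent} (via \Cref{prop:existence-of-bi-rec-paths}) to identify the embedded path with the unique, stationary bi-recurrent path, and handle the converse with \Cref{prop:stationary-implies-birec} and irreducibility. The only difference is that you spell out details the paper compresses (e.g.\ that full support under $\pi$ makes \Cref{prop:stationary-implies-birec} apply to every state, and that bi-recurrence transfers across the distributional identity), which is fine.
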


\begin{proof}
    By~\Cref{thm:embed-into-general-meff}, it is possible to assume without loss of generality
    that $\Fam{X_t}_{t \in \Z}$ is a state path in the Doeblin graph $\G$ with fully independent transitions.
    By~\Cref{prop:teaser-indep-transitions-components}, $\G$ is an \eft{} and therefore~\Cref{cor:ergodic-meft-unique-bi-recurrent}
    implies that $\G$ contains a bi-recurrent state path $\Fam{\beta_t}_{t \in \Z}$ that is, for all $y \in S$, 
    the a.s.\ unique bi-recurrent state path for state $y$ in $\G$.
    Moreover, $\beta_t \sim \pi$ for all $t \in \Z$, where $\pi$ is the stationary distribution for $P$.
    If $\Fam{X_t}_{t \in \Z}$ is bi-recurrent for some $y\in S$, then, by uniqueness, $X_t = \beta_t$ for all $t \in \Z$, a.s.
    In particular, $\Fam{X_t}_{t \in \Z}$ is stationary.
    The converse follows from \Cref{prop:stationary-implies-birec} and irreducibility.
\end{proof}

A bi-infinite path in $\G$ whose state path is not bi-recurrent
for any state $x \in S$ will be called \defn{spurious}.
Observe the difference between spurious bi-infinite paths and the unique bi-recurrent path
in \Cref{fig:meft-falling-example}.
Viewed in reverse time, a spurious path must run off to $\infty$
in the sense that for every finite set $F\subseteq S$, the reversed path eventually leaves $F$ forever.
It is possible for $\G$ to contain spurious bi-infinite paths, as was seen
in~\Cref{ex:deterministic-falling}.

\begin{figure}[t!]
    \centering
    \includegraphics[width=\linewidth]{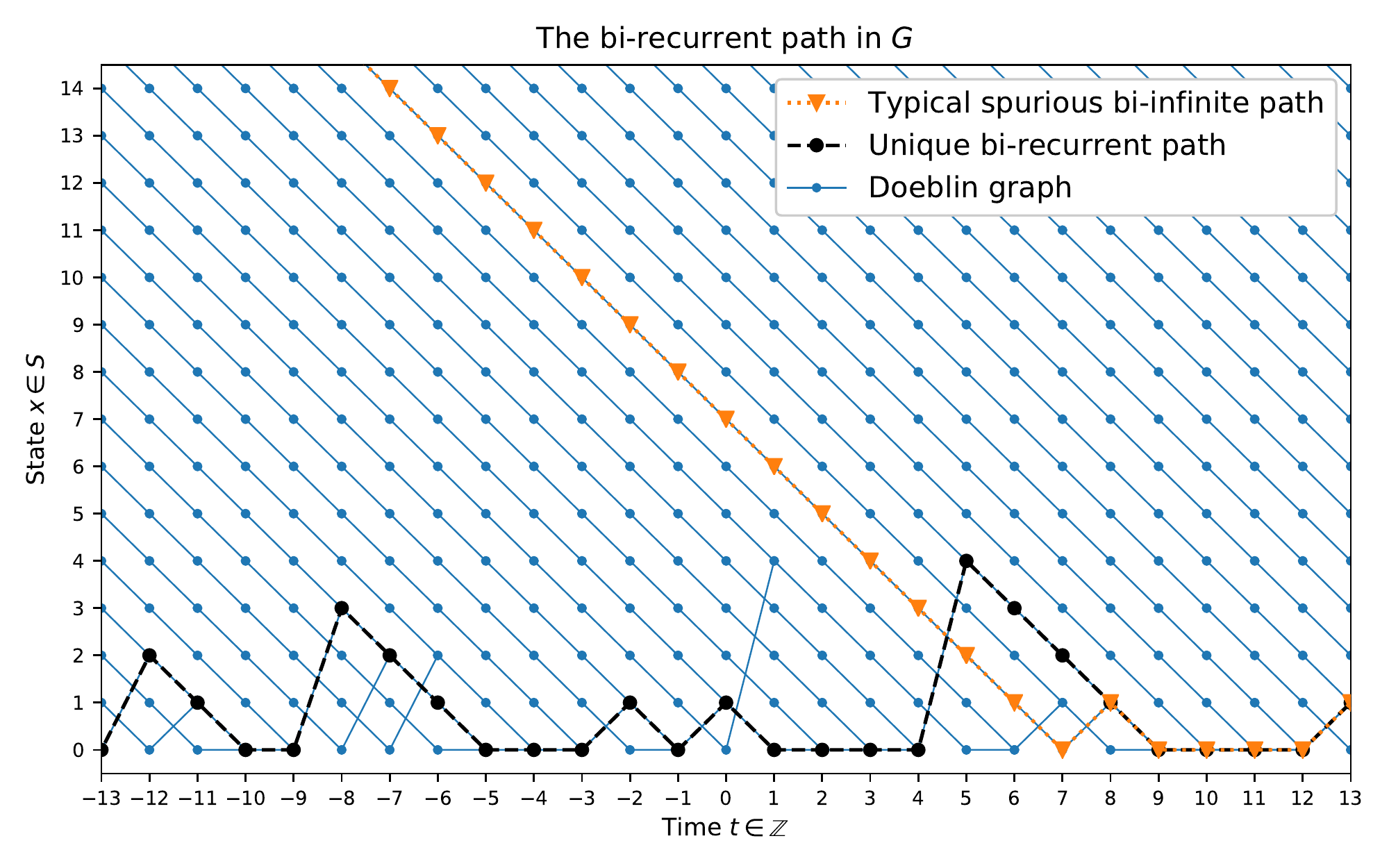}
    \caption{The Doeblin graph from~\Cref{ex:deterministic-falling} with the bi-recurrent
        path and a spurious path distinguished.%
        \label{fig:meft-falling-example}}
\end{figure}

Say that $P^n$ \defn{converges uniformly (to $\pi$ as $n \to \infty$)} if $P$ is irreducible, aperiodic, and positive recurrent
with stationary distribution $\pi$, and
$\sup_{x \in S}\left\Vert P^n(x,\cdot) - \pi\right\Vert \to 0$ as $n\to \infty$.
For example, this is automatic if $P$ is irreducible, aperiodic, and $S$ is finite.
Some authors call $P$ uniformly ergodic, but the term ergodic is not used here
to avoid a terminology collision with ergodic theory.
Uniform convergence to $\pi$ is also equivalent (cf.~\cite{meyn2009markov} Theorem 16.0.2 (v))
to the statement that there is $m$ such that
$P^m(x,\cdot) \geq \varphi(\cdot)$ for all $x \in S$, for a measure $\varphi$ which is not the zero measure.
It is also equivalent (cf.~\cite{foss1998perfect} Theorem 4.2) 
to the fact that the CFTP
algorithm succeeds in the case of fully independent transitions, i.e.\ the backwards vertical coupling time 
$\inf \set{t>0 : F^{(-t,x)}_{0} = F^{(-t,y)}_{0}, \forall x,y \in S}$ is 
a.s.\ finite.

Together, the following two results say that when a Markov chain that mixes uniformly is started in the infinite past,
it has converged to its stationary distribution by any finite time.

\begin{proposition}\label{prop:unif-ergodicity-no-spurious-paths}
    Suppose $P^n$ converges uniformly to $\pi$ as $n \to \infty$ and $\G$ has fully independent transitions.
    Then $\G$ contains no spurious bi-infinite paths.
\end{proposition}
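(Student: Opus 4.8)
The plan is to exploit the characterization, recalled just before the statement, that uniform convergence of $P^n$ to $\pi$ is equivalent to a.s.\ finiteness of the backwards vertical coupling time $\inf\set{t>0 : F^{(-t,x)}_{0} = F^{(-t,y)}_{0}, \forall x,y \in S}$. The heuristic is that a finite backward coupling time funnels every state path into the unique bi-recurrent path $\Fam{\beta_t}_{t \in \Z}$ of \Cref{cor:ergodic-meft-unique-bi-recurrent}, leaving no room for a spurious path to survive. First I would note that uniform convergence forces $P$ to be irreducible, aperiodic, and positive recurrent, so that \Cref{prop:teaser-indep-transitions-components} makes $\G$ an \eft{} and \Cref{cor:ergodic-meft-unique-bi-recurrent} supplies the bi-recurrent path $\Fam{\beta_t}_{t \in \Z}$, with $\beta_s = \lim_{r \to -\infty} F^{(r,x^*)}_s$.

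The next step is to upgrade the single-time coupling statement to a simultaneous one. Finiteness of the coupling time at time $0$ means all state paths started at time $-\tau_0$ from any state have merged by time $0$; since merged paths stay merged, so have all paths started at any time $\leq -\tau_0$, and their common time-$0$ value is exactly $\beta_0$. By shift-covariance of the whole construction, for each $s \in \Z$ there is an a.s.\ finite backward coupling time $\tau_s$ such that every state path started at any time $\leq s-\tau_s$ reaches the common value $\beta_s$ at time $s$. Taking a countable intersection over $s \in \Z$, on an a.s.\ event $E$ all of the $\tau_s$ are finite at once.

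The heart of the argument is then a deterministic statement on $E$. Let $\Fam{(t,X_t)}_{t \in \Z}$ be an arbitrary bi-infinite path in $\G$, so $X_{t+1} = \hgen(X_t, \xi_t)$ for all $t$. Fix $s \in \Z$. The path passes through the vertex $(s-\tau_s, X_{s-\tau_s})$, so the state path $F^{(s-\tau_s, X_{s-\tau_s})}$ is one of the chains captured by the coupling at time $s$, whence $X_s = F^{(s-\tau_s, X_{s-\tau_s})}_s = \beta_s$. As $s$ was arbitrary, $X_t = \beta_t$ for all $t$, so every bi-infinite path coincides with $\Fam{\beta_t}_{t \in \Z}$, which is bi-recurrent. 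In particular, no bi-infinite path fails to be bi-recurrent for every state, i.e., there are no spurious paths.

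The step I expect to require the most care is the passage from finiteness of the coupling time at a single time to the simultaneous shift-covariant family $\Fam{\tau_s}_{s \in \Z}$, together with verifying that the common coupling value at time $s$ is genuinely the $\beta_s$ of \Cref{cor:ergodic-meft-unique-bi-recurrent}. One must also confirm that the clause ``every state path started at a time $\leq s-\tau_s$ reaches $\beta_s$ at time $s$'' really does ensnare an \emph{arbitrary} bi-infinite path, i.e., that such a path threads through a vertex at time $s-\tau_s$ lying inside the coupling funnel; this is precisely where the fully independent transitions and the coalescent structure of $\G$ are used.
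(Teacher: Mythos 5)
Your proof is correct, but it runs on a different engine than the paper's. You take as input the equivalence, recalled just before the statement (Foss--Tweedie, Theorem 4.2), between uniform convergence of $P^n$ and a.s.\ finiteness of the backwards vertical coupling time; after that you need only stationarity of $\xi$ and the flow property of $\fnext$ (merged paths stay merged, so the coupling set is closed under earlier starting times) to get a.s.\ finite coupling times $\tau_s$ for all $s \in \Z$ simultaneously, and then every bi-infinite path threads through the funnel at time $s-\tau_s$ and must agree with $\Fam{\beta_t}_{t \in \Z}$ at every time. The paper instead starts from a weaker-looking input: by Theorem 5.2 of the same reference, the collapse event $C_{s,t}$ (all paths started at time $s$ agree at time $t$) has \emph{positive} probability once $t-s \geq k$, and then it manufactures almost sure coalescence by hand --- the events $E_n = C_{-k(n+1),-kn}$ over disjoint windows are independent because the driving sequence is i.i.d., so the second Borel--Cantelli lemma yields infinitely many collapses in the past, after which the endgame is the same as yours: at most one bi-infinite path, which must be the bi-recurrent path of \Cref{cor:ergodic-meft-unique-bi-recurrent}. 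Your route is shorter and avoids Borel--Cantelli entirely, at the price of outsourcing the probabilistic content to the stronger cited theorem; the paper's route is more self-contained and makes explicit where full independence enters. On that last point, note that your closing worry is slightly misplaced: in your argument, fully independent transitions are used only through the citations (Theorem 4.2 and \Cref{prop:teaser-indep-transitions-components}), not in the threading step, which is purely deterministic on the a.s.\ event you construct. Your treatment of the genuinely delicate points --- identifying the common funnel value with $\beta_s = \lim_{r \to -\infty} F^{(r,x^*)}_s$, and upgrading from a single time to all $s \in \Z$ by shift-covariance and a countable intersection --- is sound.
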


\begin{proof}
    For every $s<t$ let $C_{s,t}$ be the event that $F^{(s,x)}_t = F^{(s,y)}_t$ for all $x,y \in S$.
    That is, $C_{s,t}$ is the event that starting at time $s$, all paths in $\G$ collapse to a single state by time $t$.
    Note that $\P(C_{s,t})$ depends only on $t-s$.
    Since $\G$ has fully independent transitions and $P^n$ converges to $\pi$ uniformly as $n \to \infty$, by e.g.\ Theorem 5.2 in~\cite{foss1998perfect},
    there exists some $k \in \N$ such that
    $\P(C_{s,t}) > 0$ when $t-s\geq k$.
    Consider $E_n := C_{-k(n+1), -kn}$ for each $n \in \N$.
    One has $\P(E_n) = \P(E_0)>0$ for all $n$ and the $E_n$ are independent.
    It follows that a.s.\ infinitely many of them occur.
    On an $\omega$ for which infinitely many $E_n$ occur,
    there is at most one bi-infinite path in $\G$, and thus 
    any bi-infinite path in $\G$ must coincide with the unique bi-recurrent path guaranteed to
    exist by~\Cref{cor:ergodic-meft-unique-bi-recurrent}.
\end{proof}

It is a classical result that it is possible to find a bi-infinite stationary version
$\Fam{X_t}_{t \in \Z}$
of a Markov chain that has a stationary distribution.
The following shows that, in the case of uniform convergence to $\pi$,
this is the only way to extend a Markov chain to have
time index set all of $\Z$.
That is, if $\Fam{X_t}_{t \in \Z}$ is a Markov chain that conveges uniformly
to its stationary distribution, then it must be that $X_t
\sim \pi$ for all $t \in \Z$.

\begin{proposition}\label{prop:bi-infinite-stationary-unif-ergodic}
    Suppose $P^n$ converges uniformly to $\pi$ as $n \to \infty$.
    Then every Markov chain $\Fam{X_t}_{t \in \Z}$ with transition matrix $P$
    is stationary and bi-recurrent.
    The subtle assumption here is that the time index set is all of $\Z$.
\end{proposition}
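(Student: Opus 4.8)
The plan is to reduce the statement to the Doeblin graph setting and then combine the no-spurious-paths result with \Cref{thm:stationary-iff-birecurrent}. First I would note that uniform convergence to $\pi$ forces $P$ to be irreducible, aperiodic, and positive recurrent, so all the earlier structural results are available. By \Cref{thm:embed-into-general-meff}, and exactly as in the proof of \Cref{thm:stationary-iff-birecurrent}, it costs nothing to assume that $\Fam{X_t}_{t \in \Z}$ is realized as a state path in a Doeblin graph $\G$ with fully independent transitions and transition matrix $P$. This reduction is legitimate because both stationarity and bi-recurrence are properties of the law of $\Fam{X_t}_{t \in \Z}$ alone, so any conclusion established for the embedded copy transfers back to the original chain verbatim.

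Second, because the time index set is all of $\Z$, the embedded copy $\Fam{t,X_t}_{t \in \Z}$ is a genuine bi-infinite path in $\G$. By \Cref{prop:teaser-indep-transitions-components}, $\G$ is an \eft{}, and by \Cref{prop:unif-ergodicity-no-spurious-paths} it contains no spurious bi-infinite paths. Hence, directly from the definition of spurious, the bi-infinite path $\Fam{t,X_t}_{t \in \Z}$ must be bi-recurrent for at least one state $y \in S$.

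Third, I would invoke \Cref{thm:stationary-iff-birecurrent}: for a Markov chain indexed by $\Z$ with transition matrix $P$, being bi-recurrent for some state is equivalent to being stationary, and then to being bi-recurrent for every state. Applying this to $\Fam{X_t}_{t \in \Z}$, which we have just shown is bi-recurrent for $y$, yields that it is stationary and bi-recurrent, which is the claim once we transfer back to the original chain via the distributional identity from the embedding.

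The step that needs the most care is the legitimacy of the reduction rather than any computation: one must confirm that \Cref{thm:embed-into-general-meff} produces a copy which is a bi-infinite path (so that \Cref{prop:unif-ergodicity-no-spurious-paths}, which genuinely relies on a two-sided index set, applies), and that the resulting stationarity and bi-recurrence—being properties of the trajectory's law and of its almost-sure behavior—descend to the original $\Fam{X_t}_{t \in \Z}$. This is precisely where the highlighted subtle hypothesis enters: a one-sided Markov chain may be started off-stationarity, whereas the two-sided constraint pins the path to the unique bi-recurrent path and thereby forces stationarity.
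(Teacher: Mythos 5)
Your proposal is correct and follows essentially the same route as the paper's own proof: embed the chain as a state path in a Doeblin graph with fully independent transitions via \Cref{thm:embed-into-general-meff}, conclude $\G$ is an \eft{} with no spurious bi-infinite paths (\Cref{prop:teaser-indep-transitions-components} and \Cref{prop:unif-ergodicity-no-spurious-paths}), so the path is bi-recurrent for some state, and finish with \Cref{thm:stationary-iff-birecurrent}. The extra care you take in justifying the transfer back to the original chain (stationarity and bi-recurrence being properties of the law) is a point the paper leaves implicit, but it is the same argument.
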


\begin{proof}
    By \Cref{thm:embed-into-general-meff}, one may assume $\Fam{X_t}_{t \in \Z}$
    is a state path in $\G$ with fully independent transitions,
    which is then an \eft{} by \Cref{prop:teaser-indep-transitions-components}.
    Since $P^n$ converges uniformly to $\pi$ as $n \to \infty$, $\G$ contains
    no spurious bi-infinite paths by \Cref{prop:unif-ergodicity-no-spurious-paths},
    and hence $\Fam{X_t}_{t \in \Z}$ must be the bi-recurrent state path.
    \Cref{thm:stationary-iff-birecurrent} then implies $\Fam{X_t}_{t \in \Z}$
    is stationary.
\end{proof}

\Cref{prop:bi-infinite-stationary-unif-ergodic} 
may fail for an irreducible, aperiodic, and positive recurrent $P$
if $P$ does not converge uniformly to its stationary distribution.
Indeed, it was already shown, e.g., in~\Cref{ex:deterministic-falling},
that it is possible for $\G$ to admit spurious bi-infinite paths.
If $\Fam{X_t}_{t \in \Z}$ is a proper state path in $\G$ that corresponds to a spurious bi-infinite path, then
$\Fam{X_t}_{t \in \Z}$ is a Markov chain with transition matrix $P$, but it is not stationary since it is not bi-recurrent.
Recall that $\B(x)$ denotes the bridge graph in $\G$ using $x$ as the base point instead of $x^*$.

\begin{proposition}\label{prop:intersection-is-bi-inf-path}
    Suppose $P$ is irreducible, aperiodic, and positive recurrent, and that $\G$
    has fully independent transitions.
    If
    \begin{enumerate}
        \item{} $S$ is infinite, 
        \item{} $\G$ is locally finite, and 
        \item{} $\G$ contains no spurious bi-infinite paths,
    \end{enumerate}
    then
    \begin{align}
        \bigcap_{x \in S} V(\B(x)) = \set{(t,\beta_t): t \in \Z},
    \end{align} 
    where $\Fam{\beta_t}_{t \in \Z}$ is the unique bi-recurrent state path in $\G$.
    That is, the bi-recurrent path in $\G$ is the only thing common to all of the \befts{}.
    Alternatively,
    if $S$ is finite and has at least 2 states, then a.s.\ 
    \begin{align}
        \bigcap_{x \in S} V(\B(x)) \supsetneq \set{(t,\beta_t): t \in \Z}.
    \end{align}
\end{proposition}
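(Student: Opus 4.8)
\emph{The two straightforward inclusions first.} The plan is to dispose of the inclusion $\set{(t,\beta_t):t\in\Z}\subseteq\bigcap_{x}V(\B(x))$, which holds in both regimes, and then concentrate on the two genuinely different arguments. Since $P$ is irreducible and positive recurrent every state is positive recurrent, so \Cref{cor:ergodic-meft-unique-bi-recurrent} (applicable because $\G$ is an \eft{} by \Cref{prop:teaser-indep-transitions-components}) gives that $\Fam{\beta_t}_{t\in\Z}$ is bi-recurrent for every $x\in S$. Fixing $x$ and $t$, I would choose $s<t$ with $\beta_s=x$; the state path $F^{(s,x)}=F^{(s,\beta_s)}$ agrees with $\beta$ from time $s$ onward, as both follow $\fnext$ out of the common vertex $(s,\beta_s)$, so $F^{(s,x)}_t=\beta_t$ and thus $(t,\beta_t)\in V(\B(x))$. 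As $x$ is arbitrary this yields the inclusion.

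\emph{Reverse inclusion, infinite case.} Here I would reformulate membership: $(t,y)\in\bigcap_x V(\B(x))$ says exactly that the descendant set $D^{(t,y)}$ contains a vertex in every state, i.e.\ the rooted tree of descendants of $(t,y)$ in $\G$ meets all of $S$. When $S$ is infinite this forces that tree to be infinite, and since $\G$ is locally finite (hypothesis) each vertex has finite in-degree, so the descendant tree is locally finite and König's lemma produces an infinite backward ray out of $(t,y)$. Concatenating this ray with the forward orbit $\Fam{\fnext^n(t,y)}_{n\geq 0}$ gives a bi-infinite path in $\G$ through $(t,y)$. Because $\G$ has no spurious bi-infinite paths and (being an \eft{}) admits $\beta$ as its unique bi-recurrent path by \Cref{cor:ergodic-meft-unique-bi-recurrent}, this path must be $\beta$, whence $y=\beta_t$. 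Together with the first paragraph this gives equality.

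\emph{Finite case, strict inclusion.} The König argument now fails, and for the right reason: a finite descendant tree can still meet the finite set $S$, so finiteness no longer forces $y=\beta_t$. Writing $W:=\bigcap_x V(\B(x))$ and $W_t:=\set{y:(t,y)\in W}$, the family $\Fam{W_t}_{t\in\Z}$ is shift-covariant and $\beta_t\in W_t$ always, so $\set{W\supsetneq\set{(t,\beta_t):t\in\Z}}=\set{\exists t,\ |W_t|\geq 2}$ is shift-invariant and has probability $0$ or $1$; it therefore suffices to show $\P(|W_0|\geq 2)>0$. I would exhibit, on a finite window of driving variables, a positive-probability event on which two distinct states $a\neq b$ each have a descendant in every state at time $0$. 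Since $y\in W_0$ iff every source $x$ reaches $y$ at time $0$ from some earlier start, this gives $\set{a,b}\subseteq W_0$, and as $\beta_0$ is a single state at least one of $a,b$ differs from it, forcing $|W_0|\geq 2$. Using full independence, each one-step map $\hgen(\cdot,\xi_s)$ may be prescribed to be any transition-allowed map with positive probability; irreducibility (with aperiodicity ruling out parity obstructions) then lets me route every source, one at a time over finitely many steps, into the descendant sets feeding $a$ and $b$, and because $S$ is finite this fixes only finitely many coordinates, so the event has positive probability.

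\emph{The obstacle.} I expect the routing construction to be the crux. The composition law $G_{s-1}=G_s\circ\hgen(\cdot,\xi_{s-1})$ prepends maps on the right, so the preimage sets that must reach $a$ and $b$ evolve in a coupled way and must be kept nonempty until every source is absorbed, all while a single map at each step serves both targets. The deliberate choice to produce \emph{two} full-coverage vertices rather than one off-$\beta$ vertex is what lets me avoid reasoning about $\beta_0$, which is not measurable with respect to the finite window. The remaining ingredients — the $\supseteq$ inclusion, the König argument, and the $0$–$1$ law — are routine given the results already established.
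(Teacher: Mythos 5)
Your handling of the inclusion $\set{(t,\beta_t):t\in\Z}\subseteq\bigcap_{x\in S}V(\B(x))$ and of the infinite case coincides with the paper's proof. The paper argues the infinite case contrapositively: a vertex $v$ off the bi-recurrent path must have a \emph{finite} descendant tree, since otherwise local finiteness and K\"onig's lemma would produce a bi-infinite path through $v$, necessarily distinct from $\beta$ and hence spurious; finiteness of that tree together with infiniteness of $S$ then yields a state with no descendant at $v$. Your forward-direction version (descendant tree meets all states $\Rightarrow$ infinite $\Rightarrow$ K\"onig ray $\Rightarrow$ must be $\beta$) is the same argument, resting on the same three hypotheses and on \Cref{cor:ergodic-meft-unique-bi-recurrent}.

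The finite case is where you genuinely diverge, and your reduction is valid and in one respect cleaner than the paper's: producing \emph{two} distinct time-$0$ states, each with descendants in every state, and pigeonholing against the single state $\beta_0$, removes any need to locate the bi-infinite path inside the finite window. The paper instead produces a single coverage vertex and pins down the bi-infinite path by prefixing a finite coupling tree $T$ witnessing the a.s.\ finiteness of the backwards vertical coupling time (this is where it leans on the equivalence with success of CFTP for finite, irreducible, aperiodic chains); your pigeonhole makes $T$ and that external input unnecessary. However, the step you flag as the crux is a genuine gap as written: you assert, but never construct, the positive-probability event, and the one-source-at-a-time routing you sketch is exactly where the difficulty you yourself identify lives --- at a given time the single map $\hgen(\cdot,\xi_s)$ may be needed at the same source state both to keep $a$'s frontier alive and to keep $b$'s frontier alive, and invoking aperiodicity to ``rule out parity obstructions'' is not an argument. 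The gap is filled by the paper's own shifted-cycle device, repurposed for your two-vertex goal: by irreducibility and $\#S\geq 2$, take a closed walk $x_0,x_1,\ldots,x_n=x_0$ through all states with $p_{x_i,x_{i+1}}>0$ and $x_i\neq x_{i+1}$ for all $i$, and lay down the two paths $\set{(t-n,x_t):0\leq t\leq n}$ and $\set{(t-n+1,x_t):0\leq t\leq n-1}$ in $\Z\times S$. They are vertex-disjoint (at any common time their states are consecutive, hence distinct, entries of the walk), each visits every state of $S$, and they end at time $0$ in the distinct states $x_0$ and $x_{n-1}$. The prescribed edges have pairwise distinct initial vertices, so under fully independent transitions the event that all of them lie in $\G$ is a finite product of positive probabilities, hence positive; on it, $a=x_0$ and $b=x_{n-1}$ both have descendants in every state, which is exactly what your argument needs. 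With that construction inserted, your proof is complete and is a genuine alternative to the paper's; note that it uses only irreducibility and $\#S\geq 2$ in the construction itself, aperiodicity entering only through \Cref{prop:teaser-indep-transitions-components} and \Cref{cor:ergodic-meft-unique-bi-recurrent} to make $\beta$ well defined.
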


\begin{proof}
    For each $x \in S$, the bi-recurrent path is in $\B(x)$ because it is bi-recurrent for $x$.
    Suppose $S$ is infinite, $\G$ is locally finite, and that $\G$ contains no spurious bi-infinite paths.
    Consider a vertex $v \in V(\G)$ not on the bi-recurrent path.
    The tree of all descendants of $v$ in $\G$ must be finite, else K\"onig's
    lemma would give a bi-infinite path in $\G$ that is distinct from the unique bi-recurrent path 
    since $v$ is not on the bi-recurrent path.
    Since $\G$ contains no spurious bi-infinite paths, this is impossible.
    Since the tree of descendants of $v$ is finite but $S$ is infinite, there is
    some state $x \in S$ such that $v$ has no descendant in state $x$.
    In particular, $v \notin V(\B(x))$, showing that nothing off the bi-recurrent path
    can be common to all the \befts{}.
    
    Next suppose that $2 \leq \# S < \infty$.
    It suffices to give a finite deterministic graph $\Gr$ that is a subgraph of $\G$ with positive probability
    such that when some time-translate of $\Gr$ is a subgraph of $\G$, $\bigcap_{x \in S} V(\B(x))$ 
    contains a vertex not on the unique bi-infinite path in $\G$.
    Firstly, since $S$ is finite, choose a tree $T$ on $\Z \times S$ that occurs with
    positive probability and is an example witnesses of the a.s.\ finiteness of the backwards vertical coupling time
    $\inf \set{t>0 : F^{(-t,x)}_{0} = F^{(-t,y)}_{0}, \forall x,y \in S}$.
    Suppose $T$ is rooted at $(0,x_0)$. In particular, $V(T) \subseteq (-\infty,0] \times S$.
    By irreducibility of $P$ and the fact that $\#S \geq 2$,
    choose $L=(x_0,x_1,\ldots,x_n)$ a finite path in $S$ using only positive probability transitions from $x_0$
    back to $x_0=x_n$ that passes through all states of $S$ and has the property that $x_i \neq x_{i+1}$ for any $i$.
    Note that
    \begin{align}
        L_0 := \set{(t, x_t)  : t=0,\ldots, n},\qquad
        L_1 := \set{(t+1, x_t) : t=0,\ldots, n}
    \end{align}
    do not intersect.
    Moreover, $L_0$ and $T$ intersect only at the vertex $(0,x_0)$, and $L_1$ and $T$ do not intersect.
    Let $\Gr$ be the union of $T$, $L_0$, and $L_1$.
    The edges of $T$, $L_0$, and $L_1$ all occur with positive probability in $\G$, 
    and none of them have the same initial vertex, so that in fact they are comprised of independent edges in $\G$.
    Since $\Gr$ has only a finite number of edges, it follows that $\Gr\subseteq \G$ occurs
    with positive probability.
    Moreover, when $\Gr\subseteq \G$ occurs, the vertex $(n,x_{n-1}) \in V(\B(x))$ for all $x \in S$,
    but it is not on the bi-infinite path.
    This is because, by construction, $(0,x_0)$ is on the bi-infinite path in $\G$ and therefore
    $L_0$ makes up a segment of the bi-infinite path in $\G$.
    But, $L_1$ includes a representative for every state,
    so for every $x \in S$ there is an $s \in \Z$ such that $x \in
    D^{(n,x_{n-1})}_s$.
    Finally, $V(L_0) \cap V(L_1) = \emptyset$ so $(n,x_{n-1})$
    is not on the bi-infinite path in $\G$.
\end{proof}

\subsubsection{Other I/F Component Properties}\label{sec:other-i-f-properties}

The existence and uniqueness of a bi-infinite path in each $\Phi$-component of a network is one I/F
property that was studied at length in \Cref{sec:bi-recurrence}, which centered around bi-recurrent paths in $\B$. 
However, there are many other potential things to say about $\B$ following from its I/F structure.
A few of them are discussed in this brief section.

The first is the general structure of a network with only I/F components.
Each component of $\B$ contains a unique bi-infinite path.
Points on a bi-infinite path are sometimes referred to as \defn{immortals} due to
the fact that they do not disappear after an infinite number of applications of the follow vertex-shift $\Phi$.
A component \defn{evaporates} if each point disappears after a finite number (depending on the point) of applications of $\Phi$.
Thus, in the case of $\B$, none of the components evaporate.
\defn{Mortals} are those points in $V(\B)$ that do disappear after a finite number of applications of $\Phi$, i.e.\ those that have only
finitely many descendants.
Each component of $\B$ contains a bi-infinite path of immortals, and each immortal has exactly one child who is immortal.
Thus the immortals within a component are ordered like $\Z$ in a shift-covariant way.
Hanging off of each immortal is then a (possibly empty) tree of mortals, the
descendants of the immortal who are not themselves immortal and whose closest
immortal ancestor is the given immortal.
With this viewpoint, each component of $\B$ can be seen as a shift-covariant bi-infinite sequence of finite rooted trees, where each immortal
is the root of its tree.
If there is only one component of $\B$, then it has already been noted that there is a
unique bi-infinite path in $\B$ whose state path $\Fam{\beta_t}_{t \in \Z}$ is stationary.
However, more can be said in this case.
If there is only one component of $\B$, then in fact the whole sequence $\Fam{[Q_t, (t,\beta_t)]}_{t \in \Z}$ is stationary,
where $Q_t$ is the tree hanging from the immortal $(t,\beta_t)$.
It is important here that the isomorphism class of $Q_t$ is used and each vertex $(t,y) \in V(\B)$ is marked with $(y,\xi_t)$,
otherwise the sequence would not be stationary due to the strictly increasing time coordinate.
This view of $\B$ as a joining of trees gives an alternative way of looking at $\B$ compared to the view of $\B$
as a union of bridges between $x^*$ at different times.
Yet another viewpoint is that of $\B$ as a sequence of vertical slices.
This idea has already been explored slightly in that the way the root was chosen in the definition of the unimodular measure $\Psq$
is by choosing a root from one of these vertical slices.
The view of $\B$ as a sequence of vertical slices is explored more in \Cref{sec:local-weak-convergence} and
is the main topic of \Cref{sec:renewal-structure-of-mbefts}.

Additionally, the list of mass transports given in the appendix gives some integrability results
relating these three viewpoints. In particular, in each way of viewing $\B$ there is a natural way to split $\B$
into pieces. 
In the view of $\B$ as a joining of a sequence of trees of mortals hanging off an immortal, the vertices are partitioned by which tree they are in.
In the view of $\B$ as a sequence of vertical slices, the vertices are partitioned by which slice they are in.
In the view of $\B$ as paths started from state $x^*$, vertices are partitioned by the time they first return to $x^*$.
In fact, the mass transport arguments given in the appendix show that the mean number of vertices in a partition element
is the same for all three viewpoints.
See the list of mass transports in the appendix for a more detailed description of these results and other finer-grained results.

\subsection{Applications to Simulating the Bridge Graph}\label{sec:simulation-applications}

\subsubsection{Local Weak Convergence to the Bridge Graph}\label{sec:local-weak-convergence}

It was shown in \Cref{prop:size-biased-dist} that the measure $\Psq$ may be thought of
as an appropriately size-biased version of a network with the root picked uniformly at random from individuals at time $0$.
A common reason for size-biasing to show up is when picking uniformly at random
across a population and asking the size of the group an individual is in.
Picking uniformly at random is what unimodularity models, so one might expect
that a unimodular network can be approximated by picking the root uniformly at random from
a very large but finite sub-network.
At present, whether all unimodular networks can be approximated in this way is an 
open problem~\cite{aldous2007processes}.
In the case of the unimodular \beff{}, it will be shown directly that indeed it can be approximated by finite sub-networks
with a root picked uniformly at random.

In this section, different ways of approximating the unimodular version of $\B$ by finite subgraphs
are considered.
Recall that $\overline{\B}$ denotes $\B$ with spine added, i.e.\ $\B$ with edges connecting each $(t,x^*)$ to $(t+1,x^*)$.
For a finite interval $I \subseteq \Z$
define $V_{I}(\B) := \cup_{t \in I} V_t(\B)$ and let $\overline{\B} \cap I$ denote the subgraph of $\overline{\B}$ induced by $V_{I}(\B)$.
Also define $V'_{I}(\B) := \cup_{t \in I} \set{(s,F^{(t,x^*)}_s): t \leq s \leq \sup I}$ to be the vertices of $\B$ obtained by simulating
paths starting from $x^*$ within the time window $I$, and let $\overline{\B} \sqcap I$ denote the graph it induces in $\overline{\B}$.
Two ways of approximating $\B$ are then as follows:
\begin{enumerate}
    \item Restrict to $[-n,0]$ and pick a uniform root in $V_{[-n,0]}(\B)$.
    \item Simulate paths starting from $x^*$ in the window $[-n, n]$, 
        which gives the vertices of $\overline{\B} \sqcap [-n,n] \subseteq \overline{\B}$,
        then pick a uniform root in $V'_{[0,n]}(\B)$.
\end{enumerate}
After choosing a large viewing window $I$, 
a vertex picked at random will not likely be near the edge of this window, so the effects of
throwing away all but this finite window can be controlled.
However, the first method involves perfect knowledge of some finite window of $\B$.
Practically speaking, when $S$ is infinite, one does not have a way to be sure that one has computed all of $\B$ in a finite window,
as the only tool available is to simulate sample paths starting from different locations.
This is the motivation for the second method of picking a root.
For, even if the edge effects caused by only viewing simulations of paths in $\B$ from
$-n$ to $n$ cannot be controlled, the edge effects from $0$ to $n$ can be controlled using the information
from simulating from $-n$ to $n$.
It will be shown shortly that both of these methods enjoy convergence in the local weak sense to the measure $\Psq$.

\begin{lemma}\label{lem:ergodicEFTavg}
    For any strictly increasing sequence of finite intervals $\Fam{I_n}_{n \in \N}$ in $\Z$, and any function
    $g \in L^1(\Psq)$, one has
    \begin{align}
        \frac{1}{\#I_n\E[\#\B_0] } \sum_{v \in V_{I_n}(\B)} g[\overline{\B}, v] \to \Esq[g]
    \end{align}
    and
    \begin{align}
        \frac{1}{\#V_{I_n}(\B)} \sum_{v \in V_{I_n}(\B)} g[\overline{\B}, v] \to \Esq[g],
    \end{align}
    where both convergences happen $\P$-a.s.\ as $n \to \infty$.
    In particular 
    \begin{align}
        \frac{\#V_{I_n}(\B)}{\E[\#V_{I_n}(\B)]}
        =\frac{\#V_{I_n}(\B)}{\#I_n\E[\#\B_0]} \to 1.
    \end{align}
\end{lemma}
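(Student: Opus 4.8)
The plan is to reduce both displayed limits to Birkhoff's pointwise ergodic theorem applied to a single real-valued stationary sequence. First I would slice the vertex sum by time: since $V_{I_n}(\B) = \bigcup_{t \in I_n} V_t(\B)$ is a disjoint union, setting $G_t := \sum_{v \in V_t(\B)} g[\overline{\B}, v]$ gives $\sum_{v \in V_{I_n}(\B)} g[\overline{\B}, v] = \sum_{t \in I_n} G_t$. Working on the canonical space $\Omega = \Xi^\Z$ with the shift $\theta$, which is $\P$-preserving and ergodic because $\xi$ is stationary and ergodic, I would use shift-covariance of $\B$: translation in time by $-1$ carries each vertex $(t,x)$ of $\overline{\B}(\omega)$, with mark $(x,\xi_t)$, to the vertex $(t-1,x)$ of $\overline{\B}(\theta\omega)$, whose mark is $(x, \xi_{t-1}\circ\theta) = (x,\xi_t)$; hence it is a marked isomorphism and $[\overline{\B}(\theta\omega), (t,x)] = [\overline{\B}(\omega), (t+1,x)]$. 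It follows that $G_t \circ \theta = G_{t+1}$, i.e.\ $G_t = G_0 \circ \theta^t$, so $\Fam{G_t}_{t \in \Z}$ is stationary and ergodic.

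Next I would check integrability and identify the limit. Because $|G_0| \leq \sum_{v \in V_0(\B)} |g[\overline{\B}, v]|$, one has $\E|G_0| \leq \E[\#\B_0]\,\Esq|g| < \infty$ exactly because $g \in L^1(\Psq)$, so $G_0 \in L^1(\P)$; moreover the definition of $\Psq$ gives $\E[G_0] = \E[\#\B_0]\,\Esq[g]$. Birkhoff's theorem yields $\frac1N \sum_{t=0}^{N-1} G_t \to \E[G_0]$ a.s. To pass to the intervals $I_n = [a_n, b_n]\cap\Z$, I would use that a strictly increasing nested sequence has non-increasing left endpoints and non-decreasing right endpoints with $\#I_n \to \infty$, split $\sum_{t \in I_n} G_t$ into a forward block $\sum_{0 \leq t \leq b_n}$ and a backward block $\sum_{a_n \leq t < 0}$, apply the forward and backward ergodic theorems to each, and recombine: each block divided by $\#I_n$ equals a factor in $[0,1]$ times a Birkhoff average converging to $\E[G_0]$, so a convex-combination bound forces $\frac{1}{\#I_n}\sum_{t \in I_n} G_t \to \E[G_0]$ a.s. Dividing by $\E[\#\B_0]$ gives the first displayed limit $\Esq[g]$.

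The remaining statements follow by specialization and division. Taking $g \equiv 1$ above gives $G_t = \#\B_t$ and $\Esq[1] = 1$, whence $\frac{1}{\#I_n \E[\#\B_0]}\#V_{I_n}(\B) = \frac{1}{\#I_n \E[\#\B_0]}\sum_{t \in I_n}\#\B_t \to 1$ a.s.; since $(t,x^*) \in V_t(\B)$ for every $t$, one has $\#V_{I_n}(\B) \geq \#I_n \geq 1$, so no division by zero occurs. Writing $\frac{1}{\#V_{I_n}(\B)}\sum_{v} g[\overline{\B},v]$ as the ratio of the two first-statement averages and using that the denominator tends to $1$ yields the second displayed limit. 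The third equality is just $\E[\#V_{I_n}(\B)] = \sum_{t \in I_n}\E[\#\B_t] = \#I_n \E[\#\B_0]$ by stationarity, and its convergence to $1$ is precisely the $g\equiv 1$ computation.

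The step I expect to be the main obstacle is the passage from the standard average over $\set{0,\dots,N-1}$ to the given intervals $I_n$: the pointwise ergodic theorem does not cover arbitrary growing windows, and the argument genuinely relies on the intervals being nested, so that the endpoints move monotonically and the forward/backward split is controlled by the convex-combination estimate. Everything else—the time-slicing, the covariance identity $G_t = G_0 \circ \theta^t$, the integrability bound, and the identification $\E[G_0] = \E[\#\B_0]\,\Esq[g]$—is routine once this decomposition is in place.
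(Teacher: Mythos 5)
Your proposal is correct, and its core reduction coincides with the paper's: both slice the vertex sum by time into $G_t=\sum_{x\in\B_t}g[\overline{\B},(t,x)]$, use shift-covariance of $\B$ to obtain $G_t=G_0\circ\theta^t$ (a stationary, ergodic sequence), and identify $\E[G_0]=\E[\#\B_0]\,\Esq[g]$ from the definition of $\Psq$. The genuine difference lies in the ergodic-theorem step: the paper disposes of arbitrary strictly increasing interval sequences in one stroke by citing the pointwise ergodic theorem for amenable groups (Lindenstrauss), whereas you use only the classical forward and backward Birkhoff theorems together with a splitting of $I_n$ at the origin and a convex-combination estimate. Your route is more elementary and self-contained, and it genuinely exploits the hypothesis that the $I_n$ are nested intervals in $\Z$, so that the endpoints move monotonically and each window decomposes into a forward and a backward block anchored at a fixed time; the paper's citation is heavier machinery but covers general tempered F\o lner sequences (nested intervals are automatically tempered, so the citation is legitimate). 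Two minor points in your write-up: the exact two-block identity $\sum_{t\in I_n}=\sum_{0\leq t\leq b_n}+\sum_{a_n\leq t<0}$ only holds once $0\in I_n$; if the $I_n$ never contain $0$ (e.g.\ $I_n=[1,n]$), you must instead subtract the block of times between $0$ and $a_n$, which is harmless because $a_n$ is then bounded, but the normalizing ratio can then slightly exceed $1$, so the phrase ``a factor in $[0,1]$'' needs that small adjustment. Also, your explicit integrability check $\E|G_0|\leq\E[\#\B_0]\,\Esq|g|<\infty$ and the identification of the limiting constant are details the paper leaves implicit, and making them explicit as you did strengthens the argument.
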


\begin{proof}
    Assume without loss that $\Omega = \Xi^\Z$ is the canonical space and $\Fam{\theta_t}_{t \in \Z}$
    is the family of shift operators defined by $\theta_t(\Fam{\xi_s}_{s \in \Z}) = \Fam{\xi_{t+s}}_{s \in \Z}$.
    Both statements follow from rewriting
    \begin{align*}
        \sum_{v \in V_{I_n}(\B)} g[\overline{\B},v] 
        = \sum_{t \in I_n} \left(\sum_{x \in \B_t} g[\overline{\B}, (t,x)]\right)
        = \sum_{t \in I_n} g_0 \circ \theta_t,
    \end{align*}
    where $g_0 := \sum_{x \in \B_0} g[\overline{\B}, (0,x)]$.
    The pointwise ergodic theorem for amenable groups (cf.~\cite{lindenstrauss2001pointwise}) then proves the claim.
\end{proof}

\begin{proposition}\label{prop:localweakconvergence}
    Fix any strictly increasing sequence of finite intervals $\Fam{I_n}_{n \in \N}$ in $\Z$, and
    for each $n \in \N$, let $\bs{o}_n$ be, conditionally on $V_{I_n}(\B)$, uniformly distributed on $V_{I_n}(\B)$ and independent of $\overline{\B} \cap I_n$ (including its marks).
    Then for all bounded measurable $g:\mathcal{G}_*\to \R_{\geq 0}$ depending only on vertices at some bounded distance to the root,
    one has
    \begin{align}
        \frac{1}{\# V_{I_n}(\B)}\sum_{v \in V_{I_n}(\B)}g[\overline{\B}\cap{I_n}, v] \to \Esq[g],\qquad \P\text{-a.s.}
    \end{align}
    as $n \to \infty$.
    In particular,
    \begin{align}
        \P([\overline{\B} \cap I_n, \bs{o}_n] \in \cdot) \to \Psq, \qquad n \to \infty
    \end{align}
    in the sense of local weak convergence.
\end{proposition}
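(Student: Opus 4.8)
The plan is to reduce the statement to \Cref{lem:ergodicEFTavg} by controlling the edge effects caused by truncating $\overline{\B}$ to the window $I_n$. Fix $g$ bounded by some $M$ and depending only on the ball of some fixed radius $r$ around the root. The key geometric observation is that every edge of $\overline{\B}$ --- whether a follow-edge or a spine-edge --- joins two consecutive time slices, so the graph-distance-$r$ ball around a vertex $(t,x)$ involves only vertices at times in $[t-r,t+r]$. Consequently, if $[t-r,t+r]\subseteq I_n$, then this ball, together with all its marks, is identical whether computed in $\overline{\B}$ or in the induced subgraph $\overline{\B}\cap I_n$, so that $g[\overline{\B}\cap I_n,(t,x)] = g[\overline{\B},(t,x)]$. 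I would call such a vertex \emph{good} and call the remaining vertices of $V_{I_n}(\B)$ --- those within time-distance $r$ of an endpoint of $I_n$ --- \emph{bad}.

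Writing $I_n = [a_n,b_n]$, the number of bad vertices is at most $\sum_{t=a_n}^{a_n+r-1}\#\B_t + \sum_{t=b_n-r+1}^{b_n}\#\B_t$, a sum of at most $2r$ slice sizes. First I would split
\[
\sum_{v\in V_{I_n}(\B)} g[\overline{\B}\cap I_n, v]
= \sum_{v\in V_{I_n}(\B)} g[\overline{\B}, v] + R_n,
\]
where the remainder $R_n$ is supported on bad vertices and satisfies $|R_n| \leq M\cdot\#\{\text{bad }v\}$. Now \Cref{lem:ergodicEFTavg} already gives $\frac{1}{\#V_{I_n}(\B)}\sum_{v} g[\overline{\B},v] \to \Esq[g]$ a.s.\ together with $\#V_{I_n}(\B) \sim \#I_n\,\E[\#\B_0]$, where $\E[\#\B_0]\geq 1$, so it remains to show $\#\{\text{bad }v\}/\#V_{I_n}(\B) \to 0$ a.s.

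The main obstacle is this last negligibility estimate, since the endpoints $a_n,b_n$ move with $n$ and one cannot simply average over them. The plan is to exploit that the slice sizes $\Fam{\#\B_t}_{t\in\Z}$ form a stationary sequence with finite mean (\Cref{prop:mbeff-is-locally-finite}), for which Birkhoff's theorem yields $\#\B_t/|t|\to 0$ a.s.\ as $|t|\to\infty$. Since $\Fam{I_n}_{n\in\N}$ is strictly increasing, whichever endpoint tends to infinity is of order at most $\#I_n$, so each of the finitely many boundary slices $\#\B_{a_n+i}$ and $\#\B_{b_n-i}$ is $o(\#I_n)$ (and those at an eventually-constant endpoint are bounded, hence also $o(\#I_n)$). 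Summing the at most $2r$ boundary slices gives $\#\{\text{bad }v\} = o(\#I_n)$, and dividing by $\#V_{I_n}(\B) \sim \#I_n\,\E[\#\B_0]$ finishes the claim. Combining the pieces yields $\frac{1}{\#V_{I_n}(\B)}\sum_v g[\overline{\B}\cap I_n, v] \to \Esq[g]$ a.s., which is the first display.

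Finally, to deduce the local weak convergence I would condition on $(\overline{\B}\cap I_n, V_{I_n}(\B))$: since $\bs{o}_n$ is uniform on $V_{I_n}(\B)$ and independent of $\overline{\B}\cap I_n$,
\[
\E\big[g[\overline{\B}\cap I_n, \bs{o}_n]\big]
= \E\left[\frac{1}{\#V_{I_n}(\B)}\sum_{v\in V_{I_n}(\B)} g[\overline{\B}\cap I_n, v]\right].
\]
Applying the bounded convergence theorem to the a.s.\ limit just established gives $\E[g[\overline{\B}\cap I_n,\bs{o}_n]]\to\Esq[g]$ for every bounded $g$ depending only on a bounded neighborhood of the root. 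As such functions form a convergence-determining class for the local topology on $\mathcal{G}_*$, this is exactly the asserted $\P([\overline{\B}\cap I_n, \bs{o}_n]\in\cdot)\to\Psq$ in the local weak sense.
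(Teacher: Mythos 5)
Your proof is correct, and its skeleton matches the paper's: both arguments observe that all edges of $\overline{\B}$ join consecutive time slices, so only vertices within time-distance $r$ (resp.\ $N$) of the endpoints of $I_n$ can have their $g$-values altered by truncation; both then handle the untruncated average with \Cref{lem:ergodicEFTavg}, and both finish with the conditional-uniformity identity plus bounded/dominated convergence. Where you genuinely diverge is the crux, namely showing the boundary count is negligible relative to $\#V_{I_n}(\B)$. The paper never controls individual slices at the moving endpoints: it bounds the boundary contribution by the difference between $\sum_{k\in I_n}\#\B_k$ and $\sum_{k=\min I_n+N}^{\max I_n-N}\#\B_k$, applies \Cref{lem:ergodicEFTavg} a second time to the shrunken intervals, and concludes since the two normalized sums converge to the same limit. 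You instead prove a pointwise estimate: stationarity of $\Fam{\#\B_t}_{t\in\Z}$ together with $\E[\#\B_0]<\infty$ (\Cref{prop:mbeff-is-locally-finite}) gives $\#\B_t/|t|\to 0$ a.s.\ (the standard Ces\`aro-difference consequence of Birkhoff's theorem), and since the monotone integer endpoints either diverge, in which case they are $O(\#I_n)$, or are eventually constant, each of the at most $2r$ boundary slices is $o(\#I_n)$ a.s.; this step, including the endpoint dichotomy, is carried out correctly. Both mechanisms run on the same fuel (stationarity and integrability of $\#\B_0$), but the paper's subtraction trick is more economical, reusing its own lemma twice and requiring no case analysis, while your route is more transparent about why the boundary is harmless: individual slices grow sublinearly in time, so a bounded number of them near the edge of the window cannot compete with the order-$\#I_n\,\E[\#\B_0]$ vertices in the bulk. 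Your final reduction of local weak convergence to convergence of expectations of bounded functions of bounded neighborhoods is the same step the paper takes implicitly.
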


\begin{proof}
    Fix $N\in \N$ and let $g:\mathcal{G}_* \to \R_{\geq 0}$ measurable, bounded, and such that $g$ depends only on vertices at graph distance at most $N$
    from the root.
    One has
    \begin{align*}
        &\E[g[\overline{\B}\cap{I_n}, \bs{o}_n]]\\
        &= \E[\E[g[\overline{\B}\cap{I_n}, \bs{o}_n] \mid V_{I_n}(\B)]] \\
        &= \E\left[\frac{1}{\# V_{I_n}(\B)}\sum_{v \in V_{I_n}(\B)}g[\overline{\B}\cap{I_n}, v]\right] \\
        &= \E\left[\left(\frac{\E[\# V_{I_n}(\B)]}{\# V_{I_n}(\B)}\right) \left(\frac{1}{\E[\#V_{I_n}(\B)]}\sum_{v \in V_{I_n}(\B)}g[\overline{\B}\cap{I_n}, v]\right)\right].
    \end{align*}
    Call the two parenthesized expressions in the previous expectation $a_n$ and $b_n$ respectively,
    then it will be shown that $a_n b_n \to \Esq[g]$ a.s., from which it also follows that $\E[a_n b_n] \to \Esq[g]$ by dominated convergence.
    This will prove the claims.
    By stationarity and linearity of expectation, for each $n \in \N$,
    \begin{align*}
        \Esq[g]
        =\frac{1}{\E[\#\B_{0}]}\E\left[\sum_{v \in \B_{0}}g[\overline{\B}, v]\right]
        =\E\left[\frac{1}{\E[\#V_{I_n}(\B)]}\sum_{v \in V_{I_n}(\B)}g[\overline{\B}, v]\right].
    \end{align*}
    Call the inside of the last expectation $c_n$.
    Letting ${[\Gr, o]}_N$ denote the neighborhood of size $N$ around $o$ in a network $\Gr$,
    for all $n > N$
    \begin{align*}
        &\left\vert b_n - c_n \right\vert \\
        &\leq \frac{1}{\#I_n\E[\#\B_{0}]}\sum_{v \in V_{I_n}(\B)}\left\vert g[\overline{\B}\cap{I_n}, v]-g[\overline{\B}, v]\right\vert\\
        &\leq \frac{2\Vert g\Vert_\infty}{\#I_n\E[\#\B_{0}]}\#\set{v \in V_{I_n}(\B) : {[\overline{\B}\cap{I_n},v]}_N \neq {[\overline{\B},v]}_N}\\
        &\leq \frac{2\Vert g\Vert_\infty}{\#I_n\E[\#\B_{0}]}\left(\sum_{k = \min I_n}^{\min I_n +N} \#\B_{k}  + \sum_{k=\max I_n-N}^{\max I_n} \#\B_{k}\right)\\
        &\leq \frac{2\Vert g\Vert_\infty}{\#I_n\E[\#\B_{0}]}\left(\sum_{k \in I_n} \#\B_{k}  - \sum_{k=\min I_n+ N}^{\max I_n - N} \#\B_{k}\right)\\
        &\to 2\Vert g \Vert_\infty - 2\Vert g\Vert_\infty\\
        &= 0
    \end{align*}
    as $n\to \infty$, $\P$-a.s., by \Cref{lem:ergodicEFTavg}. 
    But also $c_n \to \Esq[g]$ and $a_n \to 1$, $\P$-a.s., also by \Cref{lem:ergodicEFTavg}.
    Hence $a_n b_n \to \Esq[g]$, $\P$-a.s., as claimed.
\end{proof}

\begin{proposition}\label{prop:localweakconvergenceofsimulationeasy}
    Fix any increasing sequence of finite intervals $\Fam{I_n}_{n \in \N} = \Fam{[-a_n, b_n]}_{n \in \N}$ in $\Z$ containing $0$ with $a_n\to \infty$ and $b_n$ strictly increasing.
    For each $n \in \N$, let $\bs{o}'_n$ be, conditionally on $V'_{I_n}(\B)$, uniformly distributed on $V'_{[0,b_n]}(\B)$ and independent of $\overline{\B} \sqcap I_n$ (including its marks).
    Then for all bounded measurable $g:\mathcal{G}_*\to \R_{\geq 0}$ depending only on vertices at some bounded distance to the root,
    one has
    \begin{align}
        \frac{1}{\# (V'_{I_n}(\B) \cap{[0,b_n]})}\sum_{v \in V'_{I_n}(\B) \cap{[0,b_n]}}g[\overline{\B}\sqcap I_n, v] \to \Esq[g],\qquad \P\text{-a.s.}
    \end{align}
    In particular,
    \begin{align}
        \P([\overline{\B} \sqcap I_n, \bs{o}_n'] \in \cdot) \to \Psq, \qquad n \to \infty
    \end{align}
    in the sense of local weak convergence.
\end{proposition}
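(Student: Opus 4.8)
The plan is to follow the template of the proof of \Cref{prop:localweakconvergence}, reducing everything to a single a.s.\ statement about empirical averages. Writing $A_n := V'_{I_n}(\B)\cap[0,b_n]$ for the set the root $\bs{o}'_n$ is drawn from (I read the displayed formula as the operative one, so $\bs{o}'_n$ is uniform on $A_n$; the burn-in window $[-a_n,0]$ is exactly what makes this set close to the full slab), it suffices to prove the first display, that $\frac{1}{\#A_n}\sum_{v\in A_n}g[\overline{\B}\sqcap I_n,v]\to \Esq[g]$ a.s.; the local weak convergence statement then follows by conditioning on $V'_{I_n}(\B)$ and applying bounded convergence exactly as in \Cref{prop:localweakconvergence}. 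The key bookkeeping device is, for each $(t,y)\in V(\B)$, the quantity $\ell(t,y):=\max\set{s:(s,x^*)\in D^{(t,y)}}$, the latest time at which $x^*$ appears among the descendants of $(t,y)$; this maximum is over a nonempty set bounded above by $t$, so $\ell(t,y)\in(-\infty,t]$. Unwinding the definition of $V'_{I_n}(\B)$ gives the clean characterization that, for $v$ at a time in $[0,b_n]$, one has $v\in A_n$ if and only if $\ell(v)\geq -a_n$. There are then two sources of error: (i) $A_n$ is only a subset of the full slab $V_{[0,b_n]}(\B)$ over which \Cref{lem:ergodicEFTavg} is stated, and (ii) for $v\in A_n$ the induced graph $\overline{\B}\sqcap I_n$ may disagree with $\overline{\B}$ on the $N$-ball of $v$, where $N$ is the radius $g$ depends on.

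For (i), the characterization shows $V_{[0,b_n]}(\B)\setminus A_n=\set{v\in V_{[0,b_n]}(\B):\ell(v)<-a_n}$. I would count this slice by slice: with $h_K:=\#\set{y\in\B_0:-\ell(0,y)>K}$ one has, for $a_n\geq K$, $\#(V_{[0,b_n]}(\B)\setminus A_n)\leq \sum_{s=0}^{b_n} h_K\circ\theta_s$, because shifting a slice at time $s\geq 0$ by $\theta_s$ only strengthens the threshold, and enlarging the threshold shrinks the set. Since $h_K\leq \#\B_0\in L^1$ and $h_K\downarrow 0$ pointwise as $K\to\infty$ (each $\ell(0,y)$ is finite), dominated convergence gives $\E[h_K]\to 0$; the pointwise ergodic theorem used in \Cref{lem:ergodicEFTavg} then yields $\limsup_n \frac{1}{b_n+1}\#(V_{[0,b_n]}(\B)\setminus A_n)\leq \E[h_K]$ for every $K$, hence this is $o(b_n)$ a.s. Combined with $\frac{1}{b_n+1}\#V_{[0,b_n]}(\B)\to\E[\#\B_0]$ from \Cref{lem:ergodicEFTavg}, this gives $\#A_n\sim (b_n+1)\E[\#\B_0]$ and lets me replace averages over $A_n$ by averages over the full slab at the cost of an $o(1)$ term.

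The main obstacle is (ii). Because every edge of $\overline{\B}$ joins consecutive times, the $N$-ball of $v$ lies in the time window $[\,\mathrm{time}(v)-N,\mathrm{time}(v)+N\,]$, so a vertex $w$ in it is absent from $V'_{I_n}(\B)$ only if $\mathrm{time}(w)>b_n$ (an upper-boundary effect) or $\ell(w)<-a_n$ (a ``deep'' effect); since $\mathrm{time}(w)\geq -N\geq -a_n$ eventually, low times cause no trouble. The upper-boundary roots lie in the slab $(b_n-N,b_n]$ and number at most $\sum_{s=b_n-N+1}^{b_n}\#\B_s=o(b_n)$ a.s.\ by \Cref{lem:ergodicEFTavg}. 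The deep roots are the delicate ones: the naive bound, summing $\#(N\text{-ball}(w))$ over deep $w$, would require $\E[\#\B_0^2]<\infty$, which is not available. The resolution is to count deep-bad roots \emph{per slice} instead. Shifting a deep-bad root $v=(s,y)$ by $\theta_s$ sends it to $(0,y)$ and sends the offending $w$ to a vertex with $\ell<-a_n$ in its $N$-ball (here $s\geq 0$ is used), so the number of deep-bad roots at time $s$ is at most $\beta_{a_n}\circ\theta_s$, where $\beta_K:=\#\set{y\in\B_0:\ \exists\,w\in N\text{-ball}_{\overline{\B}}(0,y),\ \ell(w)<-K}$. Crucially $\beta_K\leq \#\B_0\in L^1$, and $\beta_K\downarrow 0$ pointwise as $K\to\infty$ by local finiteness of $\B$ and finiteness of each $\ell$. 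Thus, exactly as in (i), $\frac{1}{b_n+1}\sum_{s=0}^{b_n}\beta_{a_n}\circ\theta_s\leq \frac{1}{b_n+1}\sum_{s=0}^{b_n}\beta_K\circ\theta_s\to\E[\beta_K]$ for $a_n\geq K$, and $\E[\beta_K]\to 0$, so the deep-bad roots are also $o(b_n)$ a.s.

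With the number of bad roots shown to be $o(\#A_n)$, the remaining steps are routine. For every good $v\in A_n$ the $N$-ball agrees (an induced subgraph containing the full ball preserves distances), so $g[\overline{\B}\sqcap I_n,v]=g[\overline{\B},v]$, and the contribution of bad roots is at most $2\Vert g\Vert_\infty\,\#(\mathrm{bad})/\#A_n\to 0$. Replacing $A_n$ by $V_{[0,b_n]}(\B)$ using (i) and invoking \Cref{lem:ergodicEFTavg} on the intervals $[0,b_n]$ yields $\frac{1}{\#A_n}\sum_{v\in A_n}g[\overline{\B}\sqcap I_n,v]\to\Esq[g]$ a.s., first for $g$ depending only on an $N$-ball and then for all bounded $g$ of bounded dependence. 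Finally, taking expectations and using bounded convergence gives $\P([\overline{\B}\sqcap I_n,\bs{o}'_n]\in\cdot)\to\Psq$ in the local weak sense.
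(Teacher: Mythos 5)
Your proof is correct, but it takes a genuinely different route from the paper's. The paper's proof rests on an exact structural fact that your argument never uses: since $\B_0$ is a.s.\ finite and paths in $\G$ coalesce once they meet, every vertex of $\B$ at a nonnegative time lies on a path $F^{(t,x^*)}$ that either starts at some $t\geq 0$ or passes through one of the finitely many vertices of $V_0(\B)$, each of which is reached by some $F^{(t(z),x^*)}$ with $t(z)$ finite; hence there is an a.s.\ finite random time after which one has \emph{equality} $V'_{I_n}(\B)\cap[0,b_n] = V_{[0,b_n]}(\B)$, and likewise ${[\overline{\B}\sqcap I_n, v]}_N = {[\overline{\B}, v]}_N$ for \emph{every} $v\in V_{[0,b_n-N]}(\B)$ (the finitely many vertices of $\B$ in the slices $-N,\dots,-1$ are also eventually captured). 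The proposition then collapses to \Cref{prop:localweakconvergence} plus the single upper-boundary term $\#V_{[b_n-N+1,b_n]}(\B)/\#V_{[0,b_n]}(\B)\to 0$, which you also have. You instead treat the missed vertices and the corrupted balls as error sets that may be nonempty for every $n$, and kill them by a truncation-plus-Birkhoff device: your per-slice counts $h_K$ and $\beta_K$ are dominated by $\#\B_0\in L^1(\P)$, decrease pointwise to $0$, and the shift identity $\ell(s,y)=s+\ell(0,y)\circ\theta_s$ together with $-a_n-s\leq -a_n$ for $s\geq 0$ gives the slice-wise domination, so both error sets have cardinality $o(b_n)$ a.s.; this correctly sidesteps the second-moment condition $\E[{\#\B_0}^2]<\infty$ that a naive union bound over balls would need. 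What each approach buys: the paper's is shorter and exploits the coalescence structure specific to $\B$; yours is more robust, using only local finiteness (\Cref{prop:mbeff-is-locally-finite}), finiteness of each $\ell(v)$, and stationarity/ergodicity, so it would survive in settings where exact eventual capture fails, at the cost of two extra limiting arguments ($n\to\infty$ then $K\to\infty$). Finally, your reading of the root as uniform on $V'_{I_n}(\B)\cap[0,b_n]$ rather than on $V'_{[0,b_n]}(\B)$ is the right call: it is the set appearing in the displayed average, and it is what the paper's own proof actually addresses.
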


\begin{proof}
    Fix $N\in \N$ and let $g:\mathcal{G}_* \to \R_{\geq 0}$ measurable, bounded, and such that $g$ depends only on vertices at graph distance at most $N$
    from the root.
    The finiteness of $\B_0$ implies that one has that ${[\overline{\B} \sqcap I_n,v]}_N = {[\overline{\B} \cap I_n, v]}_N = {[\overline{\B}, v]}_N$ eventually as $n \to \infty$ for all $v \in V_{0}(\B)$,
    and hence for all $v \in V_{I_n}(\B) \cap {[0,b_n-N]}$ eventually as $n\to \infty$ as well.
    For the same reason $V'_{I_n}(\B) \cap [0,b_n] = V_{[0,b_n]}(\B)$ eventually as $n \to \infty$ as well.
    It follows that eventually
    \begin{align*}
        &\frac{1}{\#( V'_{I_n}(\B) \cap [0,b_n])}\sum_{v \in V'_{I_n}(\B)\cap[0,b_n]}g[\overline{\B}\sqcap I_n, v]\\
        &=\frac{1}{\#V_{[0,b_n]}(\B)}\sum_{v \in V_{[0,b_n]}(\B)}g[\overline{\B}\cap I_n, v] \\
        &+ \frac{1}{\#V_{[0,b_n]}(\B)}\sum_{v \in V_{[b_n-N+1, b_n]}(\B)}(g[\overline{\B} \sqcap I_n, v] - g[\overline{\B} \cap I_n, v]).
    \end{align*}
    Of the last two terms, $\frac{1}{\#V_{[0,b_n]}(\B)}\sum_{v \in V_{[0,b_n]}(\B)}g[\overline{\B}\cap I_n, v]\to \Esq[g]$
    by \Cref{prop:localweakconvergence}, so it suffices to show that the last term goes to $0$.
    Indeed,
    \begin{align*}
        &\left|\frac{1}{\#V_{[0,b_n]}(\B)}\sum_{v \in V_{[b_n-N+1, b_n]}(\B)}(g[\overline{\B} \sqcap I_n, v] - g[\overline{\B} \cap I_n, v])\right|\\
        &\leq \frac{2\Vert g \Vert_\infty \# V_{[b_n-N+1,b_n]}(\B) }{\#V_{[0,b_n]}(\B)}\\
        &= \frac{2\Vert g \Vert_\infty (\# V_{[0,b_n]}(\B) - \#V_{[0,b_n-N]}(\B)) }{\#V_{[0,b_n]}(\B)}\\
        &\to 2\Vert g \Vert_\infty(1 - 1)\\
        &= 0
    \end{align*}
    as desired.
\end{proof}

\subsubsection{Renewal Structure of the Bridge Graph}\label{sec:renewal-structure-of-mbefts}

In this section, the driving sequence $\xi$ is assumed to be i.i.d., i.e.\  $\G$ is Markovian.
One may ask whether the bridge graph $\B$ admits any kind of renewal structure.
Is it possible that $\B_t$ contains only one state?
This is not necessarily possible.
Indeed, if $p_{x,x} = 0 $, then $\B_t$ contains at least two states for every $t \in \Z$.
It is true, though, that $\B_t$ is infinitely often equal to any set that it has positive probability of being
equal to.
Let $S_{\B}$ denote the \defn{possible configurations} of $\B_0$, i.e.
$S_{\B} := \set{E \subseteq S : \P(\B_0=E)>0}$.
By~\Cref{prop:mbeff-is-locally-finite}, 
$S_{\B}$ consists only of finite subsets of $S$ and is therefore countable.

\begin{lemma}\label{lem:general-regeneration-points}
    For any subset $E \in S_{\B}$,
    the set of $t$ for which $\B_t=E$ forms a simple stationary point process $\Psi_E$ on $\Z$
    with $\P(\Psi_E(\Z)=\infty)=1$ and intensity $\lambda_E =\P(\B_0=E)$.
    In particular, $\Fam{\B_t}_{t \in \Z}$ is bi-recurrent for each $E \in S_\B$.
\end{lemma}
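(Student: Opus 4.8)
The plan is to recognize $\Fam{\B_t}_{t \in \Z}$ as a stationary process valued in a countable alphabet and then delegate the real work to \Cref{prop:stationary-implies-birec}. First I would dispatch the easy structural claims. By construction $\Psi_E := \set{t \in \Z : \B_t = E}$ is a subset of $\Z$, so it is automatically a simple point process. Its stationarity follows from the shift-covariance of $\B$ together with the stationarity of $\xi$: translating the driving sequence by $s$ translates $\B$, and hence $\Psi_E$, by $-s$, so the law of $\Psi_E$ is invariant under shifts. For the intensity, stationarity gives $\E[\Psi_E(\set{t})] = \P(\B_t = E) = \P(\B_0 = E)$ for every $t$, so $\lambda_E = \P(0 \in \Psi_E) = \P(\B_0 = E)$ as claimed.

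The substance is the statement $\P(\Psi_E(\Z)=\infty)=1$, which I would obtain together with bi-recurrence in a single stroke. The key observation is that $\Fam{\B_t}_{t \in \Z}$ is itself a stationary process on a countable set: for each fixed $t$ it has the same law as $\B_0$, so $\P(\B_t \in S_{\B}) = 1$, and a countable union over $t \in \Z$ shows that a.s.\ $\B_t \in S_{\B}$ for all $t$ simultaneously. Since $S_{\B}$ is countable, consisting of finite subsets of $S$ by \Cref{prop:mbeff-is-locally-finite}, the process $\Fam{\B_t}_{t \in \Z}$ falls under the hypotheses of \Cref{prop:stationary-implies-birec}, which then gives that a.s.\ $\Fam{\B_t}_{t \in \Z}$ is bi-recurrent for every configuration appearing in its trajectory.

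It remains only to verify that each $E \in S_{\B}$ actually occurs a.s. For this I would introduce the event $A_E$ that $E$ appears somewhere in $\Fam{\B_t}_{t \in \Z}$, that is $1_{A_E} = \sup_{t \in \Z} 1_{\set{\B_t = E}}$. Using shift-covariance of $\B$ to rewrite $\B_t$ under a time translation of $\xi$, one checks that $A_E$ is shift-invariant and hence has probability $0$ or $1$ by ergodicity of $\xi$. Because $E \in S_{\B}$ means $\P(\B_0 = E) > 0$ and $\set{\B_0 = E} \subseteq A_E$, we have $\P(A_E) > 0$, forcing $\P(A_E) = 1$. Combined with the previous paragraph, a.s.\ $E$ occurs and, when it does, it occurs at times unbounded both above and below; this is precisely bi-recurrence for $E$, and it immediately yields $\P(\Psi_E(\Z)=\infty)=1$.

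The only mild obstacle I anticipate is making the shift-covariance bookkeeping precise enough that the shift-invariance of $A_E$ (and thus the applicability of the zero-one law) is transparent. Once $\Fam{\B_t}_{t \in \Z}$ has been correctly cast as a stationary process on a countable alphabet, the heavy lifting is entirely absorbed by \Cref{prop:stationary-implies-birec} and the ergodic zero-one law, so no delicate estimates are required.
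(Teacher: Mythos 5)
Your proof is correct, and while its skeleton matches the paper's, the key step is justified by a genuinely different device. Both arguments establish (i) via the ergodic zero--one law that each $E \in S_{\B}$ occurs somewhere a.s.\ (shift-invariance of the occurrence event plus $\P(\B_0 = E) > 0$), and (ii) that the occurrence times cannot be bounded above or below. The paper handles (ii) by treating $\Psi_E$ as a stationary point process on $\Z$ and invoking the standard fact that such a process containing at least one point contains infinitely many --- leaving the two-sided unboundedness needed for bi-recurrence somewhat implicit. You instead view $\Fam{\B_t}_{t \in \Z}$ as a stationary process with values in the countable alphabet of finite subsets of $S$ and delegate (ii) to \Cref{prop:stationary-implies-birec}, whose proof is exactly the ``stationarity forbids a first or last visit'' argument. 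This buys two things: it reuses a proposition already proved in the paper rather than appealing to point-process folklore, and it delivers bi-recurrence and $\P(\Psi_E(\Z)=\infty)=1$ in one stroke, making the ``in particular'' clause of the lemma explicit rather than implicit. The only caveat is cosmetic: \Cref{prop:stationary-implies-birec} is stated for processes taking values in $S$ itself, so you should add the one-line remark that its proof uses only countability of the value space, which lets you apply it to the set-valued process $\Fam{\B_t}_{t \in \Z}$.
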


\begin{proof}
    For $E \in S_\B$, the event that there is a $t$ such that $\B_t=E$ is shift-invariant and has positive probability.
    Therefore it happens almost surely.
    The set of such $t$ is shift-covariant and therefore determines a simple stationary point process $\Psi_E$.
    The previous line implies that $\Psi_E$ contains at least one point, and therefore infinitely many a.s.
    One calculates $\lambda_E = \E[\Psi_E(\set{0})] = \E[1_{\set{\B_0=E}}]$, completing the proof.
\end{proof}

Moreover, ruling out obvious hurdles to $\B_t$ being a singleton is sufficient.

\begin{lemma}\label{lem:regeneration-points}
    Suppose $\G$ is an \eft{} and has fully independent transitions.
    Assume that $p_{x^*,x^*}>0$.
    Then $\set{x^*} \in S_{\B}$.
\end{lemma}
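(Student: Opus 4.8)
The plan is to show directly that $\P(\B_0 = \set{x^*}) > 0$. The main tool will be the forward recursion for the vertical slices, $\B_{t+1} = \set{x^*} \cup \hgen(\B_t, \xi_t)$, which holds because any path in $\B$ sitting in state $y$ at time $t$ moves to $\hgen(y,\xi_t)$ at time $t+1$, while a fresh path is born at $(t+1,x^*)$; here $\hgen(\B_t,\xi_t)$ denotes the image set, and note $x^* \in \B_t$ always since $F^{(t,x^*)}_t = x^*$. Since each $F^{(s,x^*)}_0$ with $s \leq 0$ depends only on $\xi_s, \ldots, \xi_{-1}$, the slice $\B_0$ is $\sigma(\xi_s : s<0)$-measurable and hence independent of $\Fam{\xi_s}_{s \geq 0}$. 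The strategy is then: fix any $E \in S_\B$ (at least one exists because $\B_0$ is a.s.\ a finite subset of the countable $S$), so $\P(\B_0 = E) > 0$ and $x^* \in E$; then exhibit a positive-probability event on $\xi_0, \ldots, \xi_{D-1}$ that deterministically drives the configuration from $E$ down to $\set{x^*}$ in $D$ steps. Combining these two independent events and using stationarity of $\Fam{\B_t}_{t \in \Z}$ gives $\P(\B_0 = \set{x^*}) = \P(\B_D = \set{x^*}) > 0$.

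The crux, which I expect to be the main obstacle, is a reachability fact: every state $y$ with $\P(y \in \B_0) > 0$ admits a finite path $y = z_0, z_1, \ldots, z_m = x^*$ with $p_{z_i, z_{i+1}} > 0$ for each $i$. I would prove this using that $\G$ is an \eft{}. By \Cref{cor:ergodic-meft-unique-bi-recurrent}, since $x^*$ is positive recurrent, there is a state path $\Fam{\beta_t}_{t \in \Z}$ that is bi-recurrent for $x^*$, so $\beta_t = x^*$ for infinitely many $t > 0$. On the positive-probability event $\set{y \in \B_0}$, the forward path $F^{(0,y)}$ lies in the single component of $\G$ and hence merges with $\beta$, exactly as in the merging argument used in \Cref{lem:G-eft-implies-B-eft}; once merged it visits $x^*$ at some finite time. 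Thus $\P(\exists n \geq 0 : F^{(0,y)}_n = x^*) > 0$, which is precisely the asserted existence of a positive-probability path from $y$ to $x^*$. Writing $d(z)$ for the length of a shortest such path (finite for every relevant $z$, with $d(x^*) = 0$), I fix a next-state map $\nu$ with $\nu(x^*) = x^*$ and, for $z \neq x^*$, with $p_{z,\nu(z)} > 0$ and $d(\nu(z)) = d(z) - 1$. The strict decrease of $d$ is what rules out the map $\nu$ getting trapped in a cycle away from $x^*$, so iterating $\nu$ reaches $x^*$ in exactly $d(z)$ steps and then parks there.

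With $\nu$ in hand I would track the deterministic targets $E_0 := E$ and $E_{t+1} := \set{x^*} \cup \nu(E_t)$, where $\nu(E_t) := \set{\nu(z) : z \in E_t}$; by induction $E_t = \set{x^*} \cup \nu^t(E)$, the freshly born $x^*$ being held in place by $\nu(x^*) = x^*$. Setting $D := \max_{z \in E} d(z)$, every element of $E$ has been driven to $x^*$ by time $D$, so $E_D = \set{x^*}$. Now define
\[
    A := \bigcap_{t=0}^{D-1} \bigcap_{z \in E_t} \set{\hgen(z, \xi_t) = \nu(z)}.
\]
On $A \cap \set{\B_0 = E}$ the recursion and an easy induction give $\B_t = E_t$ for all $t \leq D$, hence $\B_D = \set{x^*}$. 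Because $\G$ has fully independent transitions, for each fixed $t$ the transitions out of distinct states are independent, and across different $t$ the increments $\xi_t$ are independent, so the finitely many events $\set{\hgen(z,\xi_t) = \nu(z)}$ appearing in $A$ are mutually independent and $\P(A) = \prod_{t,z} p_{z,\nu(z)} > 0$; here the factor $p_{x^*,x^*}$ is present because $x^* \in E_t$ for every $t$, and it is positive precisely by the hypothesis $p_{x^*,x^*} > 0$. Since $A \in \sigma(\xi_0,\ldots,\xi_{D-1})$ is independent of $\set{\B_0 = E} \in \sigma(\xi_s : s<0)$, one gets $\P(\B_D = \set{x^*}) \geq \P(\B_0 = E)\,\P(A) > 0$, and stationarity of $\Fam{\B_t}_{t \in \Z}$ then yields $\P(\B_0 = \set{x^*}) > 0$, i.e.\ $\set{x^*} \in S_\B$.
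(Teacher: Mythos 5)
Your proof is correct, and it follows the same overall skeleton as the paper's: fix a configuration $E \in S_\B$ (so $\P(\B_0 = E) > 0$, with $\B_0$ being $\sigma(\xi_s : s < 0)$-measurable), construct an independent positive-probability event on the future driving variables that deterministically collapses the slice to $\set{x^*}$, and conclude by stationarity of $\Fam{\B_t}_{t\in\Z}$. Where you genuinely differ is in how the collapsing event is built. The paper extracts it from the randomness of $\G$ itself: since $\G$ is a.s.\ connected and $x^*$ is positive recurrent, the paths started at $(0,x_1),\ldots,(0,x_n)$ a.s.\ coalesce and then hit $x^*$, so some \emph{fixed} finite tree $T$ with leaves $(0,x_i)$ and root $(t,x^*)$ has $\P(T \subseteq \G) > 0$; laziness $p_{x^*,x^*}>0$ is then forced only at those times $s$ at which $T$ has no vertex in state $x^*$, and independence of the tree event and the lazy events comes from their involving disjoint edges. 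You instead engineer the coalescence by hand: you first prove a reachability fact (each $y \in E$ admits a positive-probability path to $x^*$, via merging with the bi-recurrent path of \Cref{cor:ergodic-meft-unique-bi-recurrent}), then route all states to $x^*$ along shortest paths with the drift map $\nu$, forcing every transition of $E_t$ at every one of $D$ steps. Both uses of the \eft{} hypothesis are sound; note, though, that your reachability step is actually an a.s.\ statement (no conditioning on $\set{y \in \B_0}$ is needed) and does not require the full strength of \Cref{cor:ergodic-meft-unique-bi-recurrent} — merging $F^{(0,y)}$ with $F^{(0,x^*)}$, which returns to $x^*$ infinitely often by positive recurrence, already suffices, which is essentially all the paper uses. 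The trade-off: the paper's construction is more economical, while yours makes the forcing event completely explicit, with probability factorizing transparently as $\prod_{t,z} p_{z,\nu(z)}$, and isolates exactly where $p_{x^*,x^*}>0$, vertical independence, and the i.i.d.\ assumption enter.
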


\begin{proof}
    By~\Cref{prop:mbeff-is-locally-finite}, $\#\B_t$ is a.s.\ finite for each $t \in \Z$, and thus
    it is possible to choose $x_1,\ldots,x_n \in S$ such that $\P(\B_0 = \set{x_1,\ldots,x_n})>0$.
    Since $\G$ is an \eft{},
    choose a tree $T\subseteq \Z \times S$ with leaves $(0,x_1),\ldots,(0,x_n)$ and root $(t,x^*)$ for some $t>0$
    such that $\P(T \subseteq \G)>0$.
    With $[t] := \set{0,\ldots,t}$, let $I:=\set{s \in [t]: x^* \notin T_s}$.
    Then 
    \begin{align*}
        \P(\B_t = \set{x^*}) 
        &\geq \P(\B_0 = \set{x_1,\ldots,x_n}, T \subseteq \G, F^{(s,x^*)}_{s+1}=x^*, \forall s \in I)\\
        &\geq \P(\B_0 = \set{x_1,\ldots,x_n})\P(T \subseteq \G) \P(F^{(s,x^*)}_{s+1}=x^*, \forall s \in I)\\
        &= \P(\B_0 = \set{x_1,\ldots,x_n})\P(T \subseteq \G) {(p_{x,x})}^{\#I}\\
        &> 0.
    \end{align*}
    To justify the use of independence in the previous calculation, 
    note that $\B_0$ is $\Fam{\xi_{s}}_{s<0}$-measurable,
    whereas the events $\set{T\subseteq \G}$ and $\set{F^{(s,x^*)}_{s+1}=x^*, \forall s \in I}$
    are $\Fam{\xi_{s}}_{s \geq 0}$-measurable, so the first is independent of the second two.
    Then the second is independent of the third because, by construction,
    they involve disjoint sets of edges in $\G$.
\end{proof}

Now it is possible to see the renewal structure in $\B$.
Namely, $\Fam{\B_t}_{t \in \Z}$ is itself an irreducible, aperiodic, and positive recurrent
Markov chain under certain conditions.

\begin{proposition}\label{prop:Bx-renewal-structure}
    One has that $\Fam{\B_t}_{t \in \Z}$ is a Markov chain on $S_{\B}$.
    Additionally, $\Fam{\B_t}_{t \in \Z}$ is stationary and bi-recurrent for every $E \in S_{\B}$.
    Its transition matrix $P_{\B}$ is irreducible and positive recurrent.
    If $\G$ is an $\eft{}$ with fully independent transitions and $p_{x^*,x^*}>0$, then $\set{x^*} \in S_\B$ 
    and $P_{\B}(\set{x^*},\set{x^*})>0$ so $P_{\B}$ is aperiodic as well.
\end{proposition}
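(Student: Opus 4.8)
The plan is to show that the vertical slices $\Fam{\B_t}_{t \in \Z}$ themselves obey a set-valued stochastic recursion driven by $\xi$, and then read off every claim from standard Markov-chain theory. First I would establish the key structural identity
\[
    \B_{t+1} = \set{x^*} \cup \set{\hgen(y, \xi_t) : y \in \B_t},\qquad t \in \Z,
\]
by unwinding $\B_t = \set{F^{(r,x^*)}_t : r \leq t}$: the path started at time $t+1$ contributes $x^*$, while each path present at time $t$ in a state $y \in \B_t$ moves to $\hgen(y,\xi_t)$. Writing $\hgen^{\B}(E,\xi) := \set{x^*} \cup \set{\hgen(y,\xi) : y \in E}$, this says $\Fam{\B_t}$ is an SRS driven by $\xi$ taking values (by stationarity and countability of $S_\B$) in $S_\B$. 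Since $F^{(r,x^*)}_t$ is $\sigma(\xi_s : s<t)$-measurable for $r \leq t$, the slice $\B_t$ is $\sigma(\xi_s : s<t)$-measurable and hence independent of $\Fam{\xi_s}_{s \geq t}$, i.e.\ $\Fam{\B_t}$ is proper; because $\xi$ is i.i.d.\ in the Markovian setting, the computation in the proof of \Cref{lem:proper-state-path-is-markov-chain}, applied verbatim with $\hgen^{\B}$ in place of $\hgen$, shows $\Fam{\B_t}$ is a Markov chain with some transition matrix $P_{\B}$ on $S_\B$.

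Next I would record the two soft properties. Stationarity is immediate: $\B$ is shift-covariant and $\xi$ is stationary, so $\Fam{\B_t}$ is a stationary sequence; in particular the law $\mu(E) := \P(\B_0 = E)$ is a $P_{\B}$-invariant probability measure on $S_\B$. Bi-recurrence for every $E \in S_\B$ is exactly the content of \Cref{lem:general-regeneration-points} (equivalently, it follows from \Cref{prop:stationary-implies-birec} applied to the stationary chain on the countable set $S_\B$, together with shift-invariance of the positive-probability event that $E$ appears among $\Fam{\B_t}_{t \in \Z}$).

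For irreducibility, fix $E, E' \in S_\B$. Conditioning on the positive-probability event $\set{\B_0 = E}$, the almost-sure bi-recurrence for $E'$ furnishes a time $t>0$ with $\B_t = E'$; hence $\sum_{t>0} P_{\B}^t(E,E') > 0$ and some power of $P_{\B}$ connects $E$ to $E'$. Positive recurrence then follows from the classical fact that an irreducible Markov chain on a countable state space admitting a stationary probability distribution, here $\mu$, is positive recurrent. Finally, under the stronger hypotheses, \Cref{lem:regeneration-points} gives $\set{x^*} \in S_\B$, and substituting $\B_t = \set{x^*}$ into the recursion yields $\B_{t+1} = \set{x^*} \cup \set{\hgen(x^*,\xi_t)}$, which equals $\set{x^*}$ precisely when $\hgen(x^*,\xi_t)=x^*$; by properness and the independence of $\xi_t$ from the past this has probability $p_{x^*,x^*}>0$, so $P_{\B}(\set{x^*},\set{x^*}) = p_{x^*,x^*} > 0$ and irreducibility upgrades to aperiodicity.

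The main obstacle I anticipate is not a hard estimate but pinning down the recursion together with the measurability and properness cleanly enough to invoke the Markov-chain machinery: once the identity $\B_{t+1} = \hgen^{\B}(\B_t,\xi_t)$ and the $\sigma(\xi_s : s<t)$-measurability of $\B_t$ are in hand, every remaining assertion reduces to a standard property of stationary irreducible Markov chains on a countable state space. The one deduction that deserves genuine care is translating the almost-sure bi-recurrence of \Cref{lem:general-regeneration-points} into the strictly positive transition probabilities needed for irreducibility, which is why I route it through conditioning on $\set{\B_0 = E}$ rather than a direct path-counting argument.
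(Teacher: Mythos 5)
Your proposal is correct and follows essentially the same route as the paper's proof: the same recursion $\B_{t+1} = \set{x^*} \cup \set{\hgen(y,\xi_t) : y \in \B_t}$ with i.i.d.\ driving gives the Markov property, \Cref{lem:general-regeneration-points} gives bi-recurrence and hence irreducibility, shift-covariance gives stationarity so that the law of $\B_0$ is an invariant distribution forcing positive recurrence, and \Cref{lem:regeneration-points} together with $p_{x^*,x^*}>0$ gives $P_{\B}(\set{x^*},\set{x^*})>0$ and aperiodicity. Your additional care in routing the Markov property through properness (via the computation of \Cref{lem:proper-state-path-is-markov-chain}) and irreducibility through conditioning on $\set{\B_0=E}$ just makes explicit what the paper states tersely.
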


\begin{proof}
    By~\Cref{prop:mbeff-is-locally-finite}, $\#\B_t$ is a.s.\ finite for each $t \in \Z$.
    Moreover, $\B_{t+1} = \set{x^*} \cup \set{F^{(t,y)}_{t+1}:y\in \B_t}$, so indeed
    $\Fam{\B_t}_{t \in \Z}$ is a Markov chain on the finite subsets of $S$ since, for each $t \in \Z$,
    $\B_{t+1}$ is a function of $\B_t$
    and $\xi_t$.
    Here the running assumption that $\xi$ is i.i.d.\ is used.
    By~\Cref{lem:general-regeneration-points}, $\Fam{\B_t}_{t\in\Z}$ is bi-recurrent
    for every state $E\subseteq S$ such that $\P(\B_0=E)>0$.
    In particular, the chain must be irreducible on $S_{\B}$, else a return to some state $E_1$ could not occur
    after a return to another state $E_2$ for some $E_1,E_2$ that do not communicate.
    Since $\Fam{\B_t}_{t \in \Z}$ is shift-covariant it is stationary.
    The existence of a positive stationary distribution (the law of $\B_0$) for the irreducible $P_{\B}$ 
    implies $P_{\B}$ is positive recurrent.
    If $\G$ is an \eft{} with fully independent transitions,  then~\Cref{lem:regeneration-points} shows
    that $\set{x^*} \in S_{\B}$.
    Then $p_{x^*,x^*}>0$ implies $P_{\B}(\set{x^*},\set{x^*})>0$ as well,
    so $P_{\B}$ is also aperiodic in that case.
\end{proof}

It is possible that the $S_{\B}$ is strictly smaller than the set of all finite subsets of $S$ containing $x^*$.

\begin{example}
    Consider $S := \set{0,1,2}$ and $x^*:= 0$ with $p_{0,0} = p_{0,1} = p_{0,2} = \frac13$ and $p_{1,0} = p_{2,0} = 1$.
    That is, from $0$ make a uniform choice of where to jump, and from $1$ and $2$ deterministically return to $0$.
    Fix $t \in \Z$.
    In this case, if $1 \in \B_t$, it must be that $F^{(t-1,0)}_t = 1$.
    Similarly, if $2 \in \B_t$, it must be that $F^{(t-1,0)}_t = 2$.
    Thus it cannot be that both $1,2 \in \B_t$, and hence $\set{0,1,2} \notin S_{\B}$.
\end{example}

However, if \emph{every} state has a chance to be lazy, then $S_{\B}$ does turn out to be the set of all finite subsets of $S$ containing $x$.

\begin{proposition}\label{prop:lazy-S-B-is-everything}
    Suppose $\G$ has fully independent transitions, 
    $P$ is irreducible, and  $p_{y,y}>0$ for all $y \in S$.
    Then $\Fam{\B_t}_{t \in \Z}$ is an irreducible, aperiodic, positive recurrent, 
    and stationary Markov chain on the set of all finite subsets of $S$ containing $x^*$.
\end{proposition}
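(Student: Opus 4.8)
The plan is to combine \Cref{prop:Bx-renewal-structure} with the extra hypothesis that every state has a positive holding probability. By \Cref{prop:Bx-renewal-structure}, once I know $\Fam{\B_t}_{t \in \Z}$ is a Markov chain on $S_\B$ with transition matrix $P_\B$ that is irreducible, positive recurrent, and stationary, the only new content to establish is that $S_\B$ is exactly the collection of all finite subsets of $S$ containing $x^*$, together with aperiodicity. The irreducibility, positive recurrence, and stationarity of $P_\B$ on $S_\B$ are already given; and aperiodicity will follow from \Cref{lem:regeneration-points} and $p_{x^*,x^*}>0$ exactly as in \Cref{prop:Bx-renewal-structure}, since the hypotheses here (fully independent transitions, $p_{y,y}>0$ for all $y$, in particular $y=x^*$) are stronger than what that proposition needs for aperiodicity. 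So the heart of the matter is the identification of $S_\B$.

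\textbf{Identifying $S_\B$.} First I would note the easy inclusion: every $E \in S_\B$ is a finite subset of $S$ containing $x^*$. Finiteness is \Cref{prop:mbeff-is-locally-finite}, and $x^* \in E$ always because $(0,x^*) \in V(\B)$ by construction, so $x^* \in \B_0$ with probability one. For the reverse inclusion, fix an arbitrary finite $E = \set{x^* = y_0, y_1, \ldots, y_m} \subseteq S$ containing $x^*$; the goal is to show $\P(\B_0 = E) > 0$. The plan is to exhibit a positive-probability finite configuration of edges in $\G$ over a window $[-r,0]$ that forces $\B_0$ to equal $E$ exactly.

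\textbf{Construction.} The idea is to build, for each $y_i \in E$, a single path from $x^*$ at some negative time down to $(0,y_i)$, using irreducibility to route from $x^*$ to $y_i$ with positive-probability transitions, and to arrange these paths so that their collective image at time $0$ is precisely $E$. Concretely, by irreducibility choose for each $i$ a positive-probability path in $S$ from $x^*$ to $y_i$; padding with self-loops (available since $p_{y,y}>0$ for every $y$) I can assume all these paths have a common length $r$, so each is a path from $(-r, x^*)$ to $(0, y_i)$. The laziness hypothesis is what lets me equalize path lengths without leaving $S_\B$-irrelevant states lingering. The subtle point, and the step I expect to be the main obstacle, is controlling $\B_0$ from \emph{above}: I must ensure that no \emph{other} states appear in $\B_0$, i.e.\ that every vertex of $\B$ at time $0$ is one of the $(0,y_i)$. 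Because $\B_0 = \set{F^{(s,x^*)}_0 : s \le 0,\ \sigma^{(s,x^*)}(x^*) > |s|}$, I need to pin down the behavior of \emph{all} paths started at $x^*$ at \emph{all} times $s \le -r$ as well, not just the finitely many I explicitly drew. The clean way to handle this is to force an early coalescence: using fully independent transitions together with the argument of \Cref{lem:regeneration-points}, arrange on a positive-probability event that at some time $-r' < -r$ all states collapse so that $\B_{-r'} = \set{x^*}$ (equivalently, that $-r'$ is a regeneration time), and then condition on the finitely many transitions from $-r'$ up to $0$ producing exactly the spread $\set{x^*}\mapsto E$. Since everything after the regeneration time is determined by finitely many independent transitions, each of positive probability, the joint event has positive probability, giving $\P(\B_0 = E) > 0$ and hence $E \in S_\B$. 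Taking this over all finite $E \ni x^*$ identifies $S_\B$, and the proposition follows.
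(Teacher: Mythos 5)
Your overall reduction is sound and matches the paper's first step: invoke \Cref{prop:Bx-renewal-structure} for irreducibility, positive recurrence, and stationarity, get aperiodicity from \Cref{lem:regeneration-points} together with $p_{x^*,x^*}>0$, and reduce everything to showing that $S_\B$ contains every finite $E \ni x^*$; your regeneration trick for controlling $\B_0$ from above also mirrors the paper's conditioning on the positive-probability event $\set{\B_0 = \set{x^*}}$. But the core of your construction fails. You propose, for each $y_i \in E$, a positive-probability path from $(-r,x^*)$ to $(0,y_i)$, all padded to a common length $r$. These paths all emanate from the \emph{same} space-time vertex $(-r,x^*)$, which is impossible in $\G$: every vertex has exactly one outgoing edge (its image under $\fnext$), so paths in $\G$ can merge but can never split. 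Two paths sharing any space-time vertex coincide from that vertex onward; since your paths are supposed to end at distinct states at time $0$, they must be pairwise vertex-disjoint, and in particular must leave $x^*$ at \emph{distinct} times. Padding does not rescue this: self-loop padding at the start assigns conflicting outgoing edges to the vertices $(s,x^*)$, while padding at the end (parking each $y_i$ once reached) fixes the self-loop as the unique outgoing edge of each parked vertex, so any later path that is forced to pass through a parked state is also forbidden.

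This is not a technicality that can be absorbed into ``condition on the finitely many transitions producing the spread $\set{x^*} \mapsto E$''; whether a finite, consistent (one outgoing edge per vertex), positive-probability configuration achieving that spread exists is precisely the content of the proposition, and it is where the hypothesis $p_{y,y}>0$ for \emph{all} $y$ is really used. The paper's proof builds the spread with staggered starting times $t_0 < t_1 < \cdots$: each target $x_i$ is reached by a path $L_i$ started at $(t_i,x^*)$ and is then parked by self-loops until the final time, a constant-$x^*$ strand controls the paths started from $x^*$ at intermediate times, and---crucially---the targets are visited in an order chosen so that each new path $L_{i+1}$ avoids all previously parked states. Such an order exists by a partial-order/maximal-element argument (declare $y \prec z$ when every admissible path from $x^*$ to $z$ passes through $y$, route a maximal element first, then recurse on the rest). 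This ordering argument is the key combinatorial step of the paper's proof, and it is entirely absent from your proposal.
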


\begin{proof}
    The assumptions imply that, in fact, $P$ is irreducible, aperiodic, 
    and positive recurrent (since $x^*$ is always assumed
    positive recurrent), so \Cref{prop:Bx-renewal-structure} implies that the
    only item left to show is that $S_{\B}$ contains all finite subsets of $S$ containing $x^*$.
    Let a finite set $E$ containing $x^*$ be given.
    Call $(y_1,\ldots,y_n)$ with each $y_i \in S$ a \defn{possible path} if $\prod_{i=1}^{n-1} p_{y_i,y_{i+1}} > 0$.
    For the rest of the proof, all paths considered are possible paths.
    One would like to simply draw a path from $x^*$ to each $y \in S$ where
    after a path reaches its destination it becomes constant while it waits for the other paths to finish.
    This approach is slightly flawed because it may be that,
    for instance, every path from $x^*$ to $z$ passes through $y$.
    In this case, one must draw the path from $x^*$ to $y$ before the path from $x^*$ to $z$, otherwise
    the resulting graph would have a vertex with multiple outgoing edges, which is an impossibility in $\G$.
    However, the approach will work as long as it is possible to draw the 
    paths in an order such that no interference
    occurs.

    \begin{figure}[t!]
    \centering
    \includegraphics[width=\linewidth]{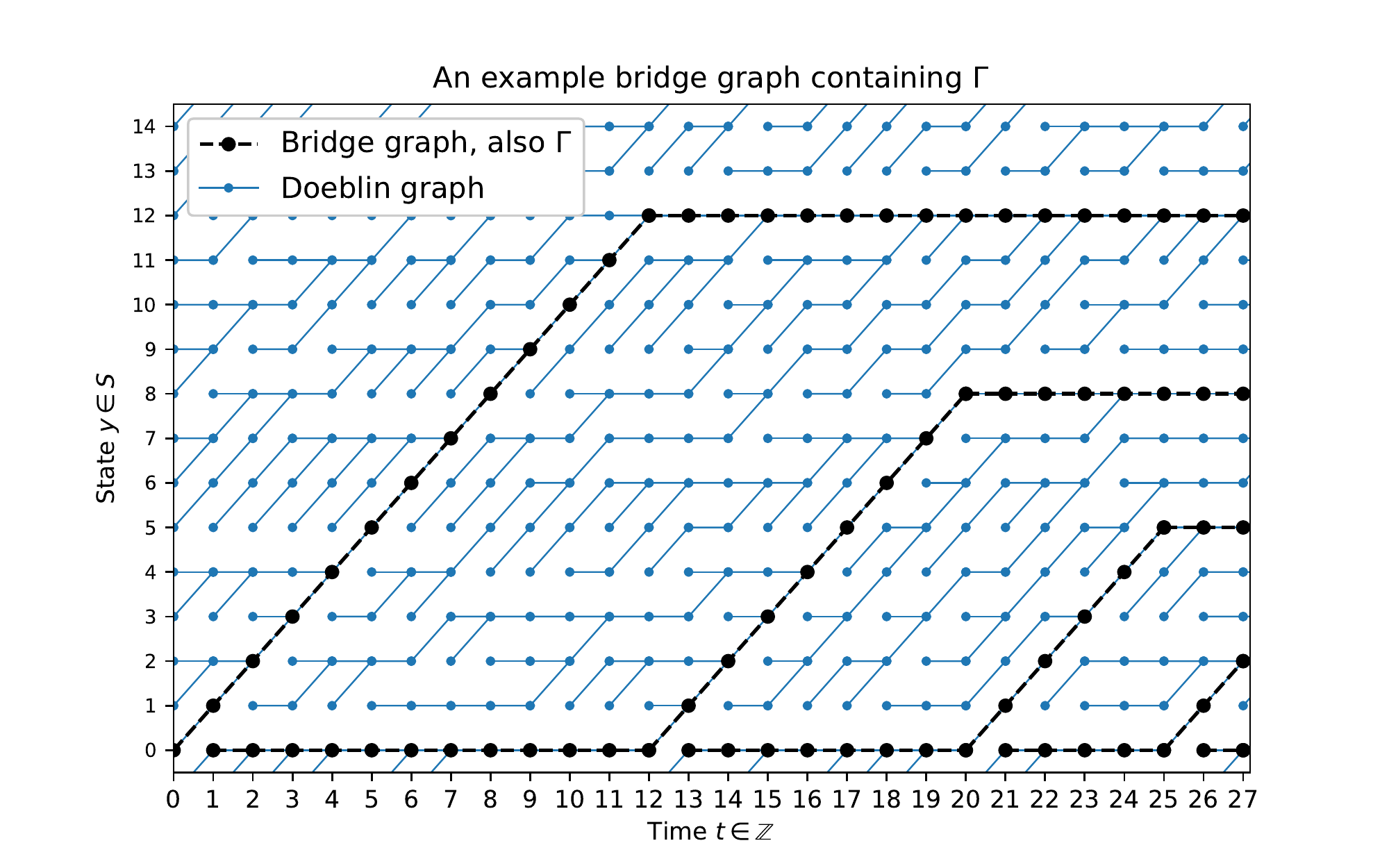}
    \caption{The graph $\Gr$ from the proof of \Cref{prop:lazy-S-B-is-everything}
        when $S=\Z/15\Z$, $x^*=0$, and $E=\set{0,2,5,8,12}$, where
        the Markov dynamics are the lazy version of the deterministic cycle $x \mapsto x+1$ on $S$.
        The graph $\Gr$ is constructed so that, as shown in the figure, 
        if $\B_0 =  \set{0}$ and $\Gr \subseteq \G$,
        then $\B_{27} = \set{0,2,5,8,12}$.}
    \end{figure}

    Define a partial order $\prec_0$ on $E$ by saying $y \prec_0 z$ if all paths from $x^*$ to $z$ pass
    through $y$ with the convention that the trivial path $(x^*)$ does not pass through $x^*$ (to prohibit $x^* \prec_0 x^*$).
    Since $E$ is finite, there is a $\prec_0$-maximal element $x_0 \in E$.
    That is, for all $y \in E$ there is a path from $x^*$ to $y$ that does not hit $x_0$.
    Choose a path $L_0$ from $x^*$ to $x_0$.
    With $\prec_n$, $x_0,\ldots,x_n$, and $L_0,\ldots,L_{n}$ defined,
    as long as $E\setminus \set{x_0,\ldots,x_n} \neq \emptyset$,
    recursively define $\prec_{n+1}$, $x_{n+1}$, and $L_{n+1}$ as follows.
    By construction, for all $y \in E \setminus \set{x_0,\ldots,x_n}$, there is a path from $x^*$ to $y$
    that avoids $x_0,\ldots,x_n$. Define $\prec_{n+1}$ on $E\setminus\set{x_0,\ldots,x_n}$
    by saying $y \prec z$ if all paths from $x^*$ to $z$ avoiding $x_0,\ldots,x_n$ pass through $y$. 
    Then it is possible to choose a $\prec_{n+1}$-maximal element
    $x_{n+1}$, i.e.\ for all $y \in E\setminus\set{x_0,\ldots,x_{n+1}}$,
    there is a path from $x^*$ to $y$ that does not pass through any of $x_0,\ldots,x_{n+1}$.
    Also choose $L_{n+1}$ a path from $x^*$ to $x_{n+1}$ avoiding $x_0,\ldots,x_{n}$.
    Necessarily the recursion terminates when $n=\# E - 1$.
    It is now possible to construct a graph $\Gr\subseteq \Z \times S$
    with $\P(\B_0=\set{x^*}, \Gr \subseteq \G) > 0$ and when $\Gr \subseteq \G$ and 
    $\B_0 = \set{x^*}$,
    one has $\B_t = E$ for some $t$.
    Let $t_i$ be the sum of the lengths of the paths $L_0,\ldots,L_{i-1}$ for each $0 \leq i \leq \#E$, with $t_0:=0$.
    Let $\Gr$ be the graph that for each $i$ has:
    \begin{enumerate}
        \item\label{it:e1} a path from $(t_i,x)$ to $(t_{i+1},x_i)$ with state path $L_i$ from time $t_i$ to $t_{i+1}$,
        \item\label{it:e2} a path started at $(t_{i+1},x_i)$ that stays constant at $x_i$ until time $t_{\#E}$, and
        \item\label{it:e3} a (possibly trivial) path started at $(t_i+1,x)$ that stays constant at $x^*$ until time $t_{i+1}$.
    \end{enumerate}
    Note that, by construction, $\Gr$ is a finite graph that is the union of
	edges that occur with positive probability.
    Moreover, the connected components of $\Gr$ are formed from points \cref*{it:e1,it:e2} for some $i$ and \cref*{it:e3}
    from $i-1$.
    Whenever $\Gr\subseteq \G$ and $\B_0 = \set{x^*}$, one has $\B_{t_{\#E}}= E$.
    This occurs with positive probability since $p_{y,y} > 0$ for all $y \in S$.
\end{proof}

\begin{proposition}\label{prop:PBx-transition-probabilities-recurrence}
    Suppose $\G$ has fully independent transitions.
    Extend the definition of $P_\B$ to 
    \begin{align}
        P_{\B}(E,E') := \P\left(\set{x^*} \cup \set{F^{(0,y)}_1: y \in E} = E'\right),
    \end{align}
    for all finite $E, E' \subseteq S$ containing $x^*$.
    Then $P_{\B}$ satisfies the following recurrence:
    for $E = \set{x^*, x_1,\ldots, x_n}$ and
    $E' = \set{x^*, y_1,\ldots, y_m}$, 
    \begin{align}
        P_{\B}(E,E') 
        &= \left( p_{x_n,x^*} + \sum_{i=1}^m p_{x_n,y_i}\right)P_{\B}(E\setminus \set{x_n}, E')\nonumber\\
        &+\sum_{i=1}^m p_{x_n,y_i}P_{\B}(E\setminus\set{x_n}, E' \setminus
		\set{y_i}),
    \end{align}
    with recursive depth at most $n$ and
    base cases
    \begin{align}
        \begin{cases}
            P_{\B}(E,E') = 0, & \#E' > \#E+1\\
            P_{\B}(\set{x^*}, \set{x^*,y}) = p_{x^*,y},& y \in S\\
            P_{\B}(E, \set{x^*}) = \prod_{y \in E} p_{y,x^*}.
        \end{cases}
    \end{align}
\end{proposition}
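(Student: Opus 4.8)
The plan is to compute $P_\B(E, E')$ directly from its definition $\P\!\left(\set{x^*} \cup \set{F^{(0,y)}_1 : y \in E} = E'\right)$ by peeling off the single state $x_n$ and conditioning on where it lands. The engine of the argument is the hypothesis of fully independent transitions, which guarantees that the family $\Fam{F^{(0,y)}_1}_{y \in E}$ is mutually independent with $F^{(0,y)}_1 \sim p_{y,\cdot}$. Writing $Z := F^{(0,x_n)}_1$ and $U := \set{x^*} \cup \set{F^{(0,y)}_1 : y \in E \setminus \set{x_n}}$, one has the decomposition $\set{x^*} \cup \set{F^{(0,y)}_1 : y \in E} = U \cup \set{Z}$, where $Z$ is independent of $U$ and, crucially, $\P(U = E'') = P_\B(E \setminus \set{x_n}, E'')$ for every admissible $E''$ by the very definition of $P_\B$. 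Thus $P_\B(E,E') = \P(U \cup \set{Z} = E')$, and the task reduces to conditioning on the value of $Z$.

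First I would record the elementary set-theoretic fact that, for $z \in E'$, the equality $U \cup \set{z} = E'$ holds if and only if $U \in \set{E',\, E' \setminus \set{z}}$, whereas if $z \notin E'$ it is impossible. The one wrinkle, and the step I expect to require the most care, is that $U$ always contains $x^*$, so when $z = x^*$ the option $U = E' \setminus \set{x^*}$ is vacuous and only $U = E'$ survives. This asymmetry between $x^*$ and the remaining states $y_1, \ldots, y_m$ is exactly what produces the two structurally different terms in the stated recurrence, so it must be tracked carefully rather than glossed over.

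Conditioning on $Z$ and using its independence from $U$, I would then write
\[
    P_\B(E, E') = \sum_{z \in E'} p_{x_n, z}\, \P(U \cup \set{z} = E').
\]
Splitting the sum according to the dichotomy above gives, for $z = x^*$, the contribution $p_{x_n, x^*}\,\P(U = E') = p_{x_n, x^*}\, P_\B(E \setminus \set{x_n}, E')$, while for each $z = y_i$ one gets $p_{x_n, y_i}\bigl(\P(U = E') + \P(U = E' \setminus \set{y_i})\bigr)$, using that the events $\set{U = E'}$ and $\set{U = E' \setminus \set{y_i}}$ are disjoint so their probabilities add. Re-expressing these probabilities through $P_\B(E \setminus \set{x_n}, \cdot)$ and collecting the $P_\B(E \setminus \set{x_n}, E')$ terms yields precisely the claimed recurrence, whose depth is at most $n$ since each step deletes one non-root state from $E$ until $E = \set{x^*}$ is reached.

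Finally I would verify the three base cases directly. The bound $\#E' > \#E + 1 \Rightarrow P_\B(E,E') = 0$ follows because $\set{x^*} \cup \set{F^{(0,y)}_1 : y \in E}$ is the union of $\set{x^*}$ with $\#E$ images and hence has at most $\#E + 1$ elements. The identity $P_\B(\set{x^*}, \set{x^*, y}) = p_{x^*, y}$ is immediate, since then the set is $\set{x^*, F^{(0,x^*)}_1}$, which equals $\set{x^*, y}$ exactly when $F^{(0,x^*)}_1 = y$. The identity $P_\B(E, \set{x^*}) = \prod_{y \in E} p_{y, x^*}$ follows because the union collapses to $\set{x^*}$ if and only if every image $F^{(0,y)}_1$ with $y \in E$ equals $x^*$, and by vertical independence these events are independent. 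Together with the recurrence, this establishes the proposition.
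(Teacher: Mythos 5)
Your argument is correct and is essentially the proof in the paper: the paper likewise peels off $x_n$ and splits $P_{\B}(E,E')$ into the three families of events $\set{F^{(0,x_n)}_1=x^*,\ U=E'}$, $\set{F^{(0,x_n)}_1=y_i,\ U=E'}$, and $\set{F^{(0,x_n)}_1=y_i,\ U=E'\setminus\set{y_i}}$ (in your notation), which is exactly your dichotomy $U\in\set{E',E'\setminus\set{z}}$ combined with the observation that $z=x^*$ contributes only through $U=E'$ because $x^*\in U$ always; full independence then factors each probability, and the depth claim is argued identically. Your verification of the base cases is, if anything, more detailed than the paper's, which calls them immediate.

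The one item in the paper's proof that you omit is the justification that the displayed formula really is an \emph{extension} of $P_{\B}$. In \Cref{prop:Bx-renewal-structure}, $P_{\B}$ was defined as the transition matrix of the Markov chain $\Fam{\B_t}_{t \in \Z}$, so the proposition implicitly asserts that for $E,E' \in S_{\B}$ one has
\begin{align*}
    \P(\B_1 = E' \mid \B_0 = E) = \P\left(\set{x^*} \cup \set{F^{(0,y)}_1 : y \in E} = E'\right),
\end{align*}
and the recurrence is of interest precisely because it computes these transition probabilities. The paper checks this compatibility by noting that $\B_0$ is measurable with respect to $\Fam{\xi_{t}}_{t<0}$ while each $F^{(0,y)}_1$ is $\xi_{0}$-measurable, so that conditioning on $\set{\B_0 = E}$ can be dropped. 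Without this step, your recurrence is established for the right-hand-side formula as a standalone definition, but is not yet tied to the transition matrix of the bridge-slice chain. The fix is two lines, but it belongs in the proof.
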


\begin{proof}
    First one justifies the extension of the definition of $P_\B$ by noting that
    for $E,E' \in S_{\B}$ one has
    \begin{align*}
        P_{\B}(E,E') &= \P(\B_1 = E' \mid \B_0 = E)\\
        &=\P(\set{x^*} \cup \set{F^{(0,y)}_{1}:y\in \B_0} = E' \mid 
        \B_0 = E)\\
        &=\P(\set{x^*} \cup \set{F^{(0,y)}_{1}:y\in E} = E' \mid 
        \B_0 = E)\\
        &=\P(\set{x^*} \cup \set{F^{(0,y)}_{1}:y\in E} = E' ),
    \end{align*}
    where the last equality follows from the fact that $\B_0$
    is measurable with respect to $\Fam{\xi_{t}}_{t < 0}$, whereas
    $F^{(0,y)}_1$ is $\xi_{0}$-measurable for each $y \in S$.
    The base cases for $P_{\B}$ are immediate from the definition of $P_{\B}$ and the independence structure.
    To see the recurrence, suppose $E = \set{x^*,x_1,\ldots,x_n}$
    and $E' = \set{x^*,y_1,\ldots,y_m}$ as above.
    Split $P_{\B}(E,E')$ depending on the value of $F^{(0,x_n)}_1=x^*$ or $F^{(0,x_n)}_1=y_i$, and on whether 
    $\set{x^*} \cup \set{F^{(0,y)}_1: y \in E\setminus \set{x_n}} = E'$ still or 
    $\set{x^*} \cup\set{F^{(0,y)}_1: y \in E\setminus\set{x_n}} = E'\setminus\set{y_i}$,
    \begin{align*}
        P_{\B}(E,E') 
        &=\P\left(F^{(0,x_n)}_1=x^*,\set{x^*} \cup \set{F^{(0,y)}_1: y \in E\setminus\set{x_n}} = E'\right)\\
        &+\sum_{i=1}^m \P\left(F^{(0,x_n)}_1=y_i,\set{x^*} \cup \set{F^{(0,y)}_1: y \in E\setminus\set{x_n}} = E'\right)\\
        &+\sum_{i=1}^m \P\left(F^{(0,x_n)}_1=y_i,\set{x^*} \cup \set{F^{(0,y)}_1: y \in E\setminus\set{x_n}} = E'\setminus\set{y_i}\right)
    \end{align*}
    which, since $\G$ has fully independent transitions, equals
    \begin{align*}
        &p_{x_n,x^*}\P\left(\set{x^*} \cup \set{F^{(0,y)}_1: y \in E\setminus\set{x_n}} = E'\right)\\
        +&\sum_{i=1}^m p_{x_n,y_i} \P\left(\set{x^*} \cup \set{F^{(0,y)}_1: y \in E\setminus\set{x_n}} = E'\right)\\
        +&\sum_{i=1}^m p_{x_n,y_i}\P\left(\set{x^*} \cup \set{F^{(0,y)}_1: y \in E\setminus\set{x_n}} = E'\setminus\set{y_i}\right)
    \end{align*}
    which simplifies to
    \begin{align*}
        \left(p_{x_n,x^*}+\sum_{i=1}^m p_{x_n,y_i}\right)P_{\B}(E\setminus\set{x_n},E')
        +\sum_{i=1}^m p_{x_n,y_i}P_{\B}(E\setminus\set{x_n}, E'\setminus\set{y_i}),
    \end{align*}
    showing the recurrence holds.

    Finally, the recursive depth needed to fully compute $P_{\B}(E,E')$ is at most $n$
    because each application of the recurrence removes an element from $E$.
\end{proof}

\begin{example}
    By implementing the recurrence of~\Cref{prop:PBx-transition-probabilities-recurrence} in, e.g.\ Python,
    one may compute $P_{\B}$ explicitly.
    Then, given values for the $p_{x,y}$, one may compute the staitonary distribution $\pi_\B$ of $P_\B$.
    For example, with $S:=\set{0,1,2}$ and $x^*:=0$, and $p_{x,y} = \frac{1}{3}$ for all $x,y \in S$,
    one has
    \begin{align*}
        \pi_{\B} 
        &= \begin{bmatrix}
            \pi_{\B}(\set{0}) & \pi_{\B}(\set{0,1}) & \pi_{\B}(\set{0,2}) & \pi_{\B}(\set{0,1,2})
        \end{bmatrix}\\
        &= \begin{bmatrix}
            \frac{17}{143} & \frac{45}{143} & \frac{45}{143} & \frac{36}{143}
        \end{bmatrix}.
    \end{align*}
\end{example}

It is an open question whether, in the fully independent transitions case,
there is a general closed form expression for $P_\B$ in terms of $P$ or for the stationary
distribution $\pi_\B$ of $P_\B$ in terms of $P$ and $\pi$.
\section{Bibliographical Comments}

While this work may be the first time the Doeblin graph $\G$
has been explicitly defined and studied in its own right, it is without doubt that
most, if not all, who have worked on CFTP-related research have had this picture in mind.
Rather, the novelty here lies in the consideration of the bridge graph $\B$.
While, to the best of the authors' knowledge, the bridge graph $\B$ has not previously been defined or studied,
it is not without ties to other objects that have been previously studied.

The first occurrence of some form of the bridge graph appears in~\cite{borovkov1992stochastically},
where Borovkov and Foss consider a family of stochastically recursive sequences started at times $0,-1,-2,\dotsc$,
all with the same initial condition, and they proved the existence (under suitable conditions)
of a stationary version of the SRS\@.
They defined three notions of coupling convergence
and studied when coupling convergence to the stationary SRS occurs.
Their notion of strong coupling convergence to the stationary SRS is akin to the condition that $\B$ is an \eft{}.
That is, it is the condition that all paths in $\B$ eventually merge.
It is conceivable that, in the \eft{} case, one could derive the existence of the bi-infinite path
in $\B$ from the work in~\cite{borovkov1992stochastically}, though it is not clear
whether Borovkov and Foss had this in mind, and they did not make any mention of
the key bi-recurrence property used in the current paper to distinguish
this bi-infinite path from the potential others in $\G$.

Another occurrence of a similar object to the bridge graph may be found in~\cite{baccelli2018renewal}
in the very special case of integer-valued renewal processes.
The dynamics there are slightly different, where instead of specifying a whole process
started from each time, one marks each time with the time of death of an individual who is born at that time.
This is akin to marking each $t \in \Z$ by the return time $\tau^{(t,x^*)}(x^*)$ of $F^{(t,x^*)}$ to $x^*$,
though in~\cite{baccelli2018renewal} these times of death are assumed to be i.i.d.,
whereas in the present work they have intricate dependence due to the Doeblin-type coupling.
The population process defined in~\cite{baccelli2018renewal} is then similar in nature
to the sequence of cardinalities of $\Fam{\B_t}_{t \in \Z}$ as considered in \Cref{sec:renewal-structure-of-mbefts}.
It is proved in~\cite{baccelli2018renewal} that, under natural conditions, the population process is a stationary
regenerative process with independent cycles. In the present work, the process $\Fam{\B_t}_{t \in \Z}$
was shown in \Cref{prop:Bx-renewal-structure} to be an irreducible, aperiodic, and positive recurrent
Markov chain under suitable conditions, which therefore also admits an i.i.d.\ cycle decomposition.
The analysis of this special case and, in particular, the identification of the I/F structure of the
components has been kept in mind throughout the development of the theory of Doeblin \effs{}.
\section{Appendix}
\subsection{Postponed Proofs}

Proofs that were only sketched in the main text are collected in full detail here.

\begin{proof}[Proof of~\Cref{thm:embed-into-general-meff}]
    For each $t \in \Z$, let $\mu_t$ be the distribution of $X_t$.
    It is enough to show the existence of $(\Omega', \F', \P')$
    on which there is a process $X':= \Fam{X'_t}_{t \in \Z}$ and 
    some i.i.d.\ $\xi':= \Fam{\xi'_{t}}_{t\in \Z}$ 
    such that
    \begin{enumerate}
        \item\label{it:c1} $X'_t \sim \mu_t$ for all $t \in \Z$,
        \item\label{it:c2} $\xi_t' \sim \xi_t$ for all $t \in \Z$,
        \item\label{it:c3} $X'_t$ is independent of $\Fam{\xi'_s}_{s \geq t}$ for all $t \in \Z$, and
        \item\label{it:c4}$X'_{t+1} =\hgen(X_t, \xi'_{t})$ for all $t \in \Z$. 
    \end{enumerate}
    \Cref*{it:c1,it:c2,it:c3,it:c4} and \Cref{lem:embed-srs-into-doeblin-graph} will imply the result.
    Note that \cref*{it:c1,it:c2,it:c3,it:c4} are sufficient to characterize the joint finite dimensional distributions
    of $\Fam{X'_t}_{t \in \Z}$ and $\Fam{\xi'_{t}}_{t \in \Z}$.
    To see this fix $t_0 \leq t_1$.
    The joint distribution of
    $\Fam{X'_{t}}_{t_0 \leq t \leq t_1}$ and $\Fam{\xi'_{t}}_{t_0 \leq t \leq t_1}$
    is determined because,
    conditional on $\Fam{\xi'_{t}}_{t_0 \leq t \leq t_1}$,
    $X'_{t_0}$ is still distributed as $\mu_{t_0}$ by \cref*{it:c1,it:c3},
    and, conditional on both $X'_{t_0}$ and $\Fam{\xi'_{t}}_{t_0 \leq t \leq t_1}$,
    one has that $\Fam{X'_{t}}_{t_0 \leq t \leq t_1}$ is deterministic by \cref*{it:c4}.
    Thus it suffices to show that $\Fam{X'_t}_{t \in \Z}$ and $\Fam{\xi'_{t}}_{t \in \Z}$
    satisfying \cref*{it:c1,it:c2,it:c3,it:c4} exist.
    Also note that \cref*{it:c1,it:c2,it:c3,it:c4} with $t_0 \leq t  \leq t_1$ are sufficient for determining the joint distribution of
    $\Fam{X'_s}_{t_0 \leq s \leq t_1}$ and $\Fam{\xi'_{s}}_{t_0 \leq s \leq t_1}$.
    
    The proof will proceed by the Kolmogorov extension theorem.
    Suppose, by extending $(\Omega, \F, \P)$ if necessary, that
    $\Fam{X_t}_{t \in \Z}$ and $\xi$ are defined on the same space and are independent of each other.
    Consider for each $t \in \Z$, the state path $F^{(t,X_t)}$ in $\G$ started at $(t,X_t)$.
    Then for all $s,t \in \Z$ with  $s \leq t$ and all $x \in S$,
    \begin{align*}
        \P(F^{(s,X_s)}_t = x) 
        &= \sum_{y \in S} \P(X_s=y, F^{(s,X_s)}_t=x) \\
        &= \sum_{y \in S} \mu_s(y) P^{t-s}(y,x)\\
        &= \mu_s P^{t-s}(x),
    \end{align*}
    where in the previous line $P$ is treated as a transition kernel with powers $P^k$ ($k=0,1,2,\ldots$).
    Since $\Fam{X_t}_{t \in \Z}$ exists and is a Markov chain with transition matrix $P$,
    one has 
    \begin{align}
        \mu_s P^{t-s} = \mu_{r}P^{s-r}P^{t-s} = \mu_{r} P^{t-r} = \mu_t
    \end{align}
    for all $r \leq s \leq t$.
    Moreover, for all $s\leq t$, $F^{(s,X_s)}_t$ is $\sigma(X_s, \Fam{\xi_{t'}}_{s \leq t' < t})$-measurable,
    hence it is independent of $\Fam{\xi_{t'}}_{t' \geq t}$.
    Now fix $s_0, t_0, t_1 \in \Z$ with $s_0 \leq t_0  \leq t_1$
    and consider the joint distribution of 
    $\Fam{F^{(s_0,X_{s_0})}_t}_{t_0\leq t \leq t_1}$ and  $\Fam{\xi_{t}}_{t_0 \leq t \leq t_1}$.
    One has $F^{(s_0, X_{s_0})}$ and $\xi$ satisfy
    \begin{enumerate}[label=(\roman*')]
        \item\label{it:c1p} $F^{(s_0,X_{s_0})}_t \sim \mu_t$ for all $t_0 \leq t \leq t_1$,
        \item\label{it:c2p} $\xi \sim \xi$,
        \item\label{it:c3p} $F^{(s_0,X_{s_0})}_t$ is independent of $\Fam{\xi_{t'}}_{t' \geq t}$ for all $t_0 \leq t \leq t_1$, and
        \item\label{it:c4p} $F^{(s_0,X_{s_0})}_{t+1} = \hgen(F^{(s_0,X_{s_0})}_t, \xi_{t})$ for all $t_0 \leq t$. 
    \end{enumerate}
    As mentioned before, \cref*{it:c1p,it:c2p,it:c3p,it:c4p} are sufficient to determine the joint distribution of
    $\Fam{F^{(s_0,X_{s_0})}_t}_{t_0 \leq t \leq t_1}$ and $\Fam{\xi_{t}}_{t_0 \leq t \leq t_1}$,
    so the joint distribution of 
    $\Fam{F^{(s_0,X_{s_0})}_t}_{t_0 \leq t \leq t_1}$ and $\Fam{\xi_{t}}_{t_0 \leq t \leq t_1}$
    does not depend on $s_0$ as long as $s_0 \leq t_0$.
    Thus a consistent set of finite dimensional distributions is determined by taking
    $s_0, t_0 \to -\infty$ and $t_1 \to \infty$ while maintaining $s_0 \leq t_0 \leq t_1$.
    It follows by the Kolmogorov extension theorem that there is a space $(\Omega', \F', \P')$ and
    processes
    $X'=\Fam{X'_{t}}_{t \in \Z}$ and $\xi'=\Fam{\xi'_{t}}_{t \in \Z}$ 
    satisfying \cref*{it:c1,it:c2,it:c3,it:c4}, completing the proof.
\end{proof}

Call $P$ \defn{strongly recurrent} if all its recurrent classes are positive recurrent and
call $P$ \defn{recurrent-attracting} if any Markov chain with transition matrix $P$ eventually
enters a recurrent state.
These conditions are both automatic if $P$ is irreducible and positive recurrent.

\begin{proposition}[Subsumes~\Cref{prop:teaser-indep-transitions-components}]\label{prop:indep-transitions-components}
    Let $S = T \cup \bigcup\Fam{R^i}_{0 \leq i< N}$ decompose $S$ into its transient states and
    $N\in\N\cup\set{\infty}$ recurrent communication classes for $P$.
    Assume that $P$ is strongly recurrent and recurrent-attracting.
    Let $d(i)$ be the period of $R^i$, and let $R^i = C^i_0 \cup \cdots \cup C^i_{d(i)-1}$
    be a cyclic decomposition.
    If $\G$ has fully independent transitions, then the components 
    $\Fam{\mathcal{C}^i_j}_{0 \leq i < N, 0 \leq j < d(i)}$ of $\G$ are in bijection
    with $\Fam{C^i_j}_{0 \leq i < N, 0 \leq j < d(i)}$, and for $x \in C^i_j$,
    $(t,x) \in V(\mathcal{C}^i_{j'})$ if and only if $j-t=j' \pmod{d(i)}$, and for $x \in T$, $(t,x) \in V(\mathcal{C}^{i}_{j})$
    where $(t',y) \in V(\mathcal{C}^{i}_{j})$ is any vertex on the path of $(t,x)$ for which $y$ is recurrent.
    That is,  $\mathcal{C}^i_{j}$ is the set of all vertices of all paths in $\G$
    that pass through an element of $C^i_j$ at any time $t = 0 \pmod{d(i)}$.
\end{proposition}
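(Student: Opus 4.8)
The plan is to characterize the components of $\G$ through coalescence of forward paths and then match them bijectively to cyclic classes by means of a conserved invariant. First I would record the basic connectivity fact: since $\G$ is acyclic as an undirected graph and every vertex has a unique outgoing edge, two vertices $(s,x)$ and $(t,y)$ lie in the same component if and only if their forward state paths coalesce, i.e.\ $F^{(s,x)}_u = F^{(t,y)}_u$ for some $u \geq \max(s,t)$. Indeed, in the resulting forest the unique undirected path joining two vertices can switch from following edges to reversing them at most once (a vertex has only one parent, so there is no ``valley''), hence it ascends from one vertex to a common descendant and descends to the other; conversely a common descendant connects them.

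Next I would introduce the component invariant. By the recurrent-attracting hypothesis, a.s.\ every forward path (there are only countably many starting vertices) eventually enters some recurrent class and remains there. For a vertex $v=(t,x)$ let $(t_v,z_v)$ be the first recurrent vertex on its forward path, say $z_v \in C^i_c$, and set $I(v):=(i,(c-t_v)\bmod d(i))$. Within a recurrent class the cyclic phase advances by one per unit time, so $(\text{phase at time }u)-u \pmod{d(i)}$ is conserved along the path; thus $I$ is constant along forward paths and, since any two connected vertices share a common descendant, constant on each component. This already forces vertices with distinct $I$-values into distinct components, it recovers the stated rule $j-t\equiv j'\pmod{d(i)}$ for recurrent $x\in C^i_j$, and it supplies the description of transient vertices via the recurrent vertex their path reaches. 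Every value $(i,j')$ is realized, e.g.\ by $(-j',x)$ with $x\in C^i_0$.

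The substantive half is to show, conversely, that any two vertices sharing an $I$-value lie in the same component, i.e.\ that their forward paths a.s.\ coalesce. Here I would follow the reduction indicated for \Cref{prop:teaser-indep-transitions-components}: fix $v,w$ with $I(v)=I(w)=(i,j')$, pass forward until both paths are inside $R^i$, and sample the pair of paths only at the times $u$ in a fixed residue class modulo $d:=d(i)$, at which both coordinates occupy a common cyclic class $C^i_c$. The sampled single chain has kernel $P^d$ restricted to $C^i_c$, which is irreducible, aperiodic, and, by strong recurrence, positive recurrent. Fully independent transitions --- vertical independence together with the i.i.d.\ driving sequence --- ensure that, strictly before the two coordinates meet, the sampled pair evolves as the independent product of two copies of this chain. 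Since the product of two irreducible aperiodic positive recurrent chains is again positive recurrent, hence recurrent, it a.s.\ hits the diagonal, so the two paths coalesce. A countable intersection over all pairs $v,w$ with a common $I$-value makes this simultaneous a.s.

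Putting the two halves together identifies the components of $\G$ bijectively with the cyclic classes $\Fam{C^i_j}$ and yields the stated membership rules. The main obstacle I anticipate is the coalescence step: one must verify carefully that vertical independence combined with the Markov property genuinely renders the pre-coalescence pair a product chain, so that positive recurrence of the product truly forces a meeting, and that the periodic case is handled correctly by the sampling reduction --- so that the relevant chain is the irreducible aperiodic $P^d$ on a single cyclic class rather than the undecomposed periodic kernel. The transient states then require only the routine bookkeeping, furnished by the recurrent-attracting hypothesis, that their forward paths feed into this recurrent structure.
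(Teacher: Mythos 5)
Your proposal is correct and takes essentially the same approach as the paper's proof: the same phase invariant (what the paper calls the ``time-zero class'' $C^i_{j-t}$), the same coalescence argument via the product of two copies of $P^{d(i)}$ restricted to a single cyclic class being irreducible, aperiodic, and positive recurrent under fully independent transitions, and the same use of the recurrent-attracting hypothesis to attach each transient vertex to the component of the first recurrent vertex on its forward path. The differences are purely organizational (you make the forward-coalescence characterization of components explicit, and your ``common descendant'' is what the paper, whose follow map points to parents, calls a common ancestor), not mathematical.
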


\begin{proof}
    Fix $i,j,t$ and let $x,y \in C^i_j$.
    Then $P^{d(i)}$ restricted to $C^i_j$ is irreducible, aperiodic, and positive recurrent.
    Thus the product chain $P^{d(i)} \otimes P^{d(i)}$ restricted to $C^i_j \times C^i_j$ is too.
    Strictly before the hitting time to the diagonal, $\Fam{F^{(t,x)}_{t+ sd(i)}, F^{(t,y)}_{t+sd(i)}}_{s \geq 0}$ is distributed
    the same as the product chain $P^{d(i)} \otimes P^{d(i)}$ on $C^i_j \times C^i_j$,
    and thus the hitting time to the diagonal is a.s.\ finite because the product chain is
    irreducible, aperiodic, and positive recurrent.
    It follows that $(t,x)$ and of $(t,y)$ are in the same component of $\G$.
    If $x \in C^i_j$ and $y \in C^{i'}_{j'}$ with $i' \neq i$, then $F^{(t,x)}$ and $F^{(t,y)}$ cannot merge because
    the states of $F^{(t,x)}$ are contained in $R^i$ and the states of $F^{(t,y)}$ is contained in $R^{i'}$.
    If $x \in C^i_j$ and $y \in C^i_{j'}$ with $j' \neq j \pmod{d(i)}$, then $F^{(t,x)}$ and $F^{(t,y)}$ cannot merge
    because $F^{(t,x)}_{t+s} \in C^i_{j+s}$ but $F^{(t,y)}_{t+s} \in C^i_{j'+s}$ with indices taken modulo $d(i)$
    as necessary.
    Thus, the set of $y \in S\setminus T$ such that $F^{(t,x)}$ eventually merges with $F^{(t,y)}$ 
    is precisely $C^i_j$.
    If $x \in C^i_j, y \in C^{i'}_{j'}$ and $t \leq t'$, then
    $F^{(t,x)}_{t'} \in C^i_{j+(t'-t)}$, so it follows
    that $F^{(t,x)}$ and $F^{(t',y)}$ eventually merge if and only if $i'=i$ and $j+(t'-t) = j' \pmod{d(i)}$, or equivalently
    $j'-t'= j-t \pmod{d(i)}$.
    It follows that for any $x,y \in S\setminus T$ and any $t,t' \in \Z$, the two vertices
    $(t,x),(t',y) \in V(\G)$ are in the same component of $\G$ if and only if there are $i,j,j'$
    such that $x \in C^i_j, y \in C^i_{j'}$ and $j'-t' = j-t \pmod{d(i)}$.
    If $x \in C^i_j$, then $F^{(t,x)}_{s} \in C^i_{j-t+s}$ for all $s \geq t$.
    Thus $F^{(t,x)}_{s} \in C^i_{j-t}$ for all $s = 0 \pmod{d(i)}$ with $s\geq t$.
    Call $C^i_{j-t}$ the \defn{time-zero class of $(t,x)$}.
    Then for $x\in C^i_j, y\in C^i_{j'}$ and $t,t' \in \Z$, the condition that
    $j'-t' = j-t \pmod{d(i)}$ is equivalent to the fact that $(t,x)$ and $(t,y)$ have the same time-zero
    class.
    Thus the components of $\G$ are exactly the equivalence classes of vertices in the same time-zero class,
    except possibly ignoring $(t,x)$ for transient $x$.
    By the assumption that $P$ is recurrent-attracting, if $x \in T$, then $F^{(t,x)}$ eventually hits some recurrent 
    class and so does not form a new component of $\G$, and the path $F^{(t,x)}$ is in the component
    of the first (and every) $(t',y)$ it hits with $y$ recurrent.
\end{proof}

\begin{proof}[Proof of~\Cref{lem:class-of-tangible-network-is-measurable}]
    Fix $k \in \N$.
    For every $v \in V=\Z\times S$,
    the event that $v \in V(\bs{\Gr})$ and $d_{\bs{\Gr}}(\bs{o},v) \leq k$ is measurable.
    Indeed, there are at most countably many paths $(v_0,v_1,v_2,\ldots,v_n)$ in $V$ with $n\leq k$, and
    the desired event is the union over all such paths of any length $n \leq k$ ending at $v$ of the event
    \begin{align*}
        \set{\bs{o}=v_0} \cap \bigcap_{i=1}^n \left(\set{f_V(v_i)=1} \cap \set{f_E(v_{i-1},v_{i})=1}\right).
    \end{align*}
    From here one sees that event that the $r$-neighborhood around $\bs{o}$ is exactly some fixed finite graph $\Gr$
    is measurable.
    Indeed,
    \[
        \set{N_{\bs{\Gr}}(\bs{o},r) = \Gr} 
        = \bigcap_{v \in V} \set{(f_V(v)=1 \text{ and } d_{\bs{\Gr}}(\bs{o},v) \leq r) \iff v \in V(\Gr)}.
    \]
    Enhancing $\Gr$ with marks $\xi_u, \xi_{v,w}$ for each $u \in V(\Gr)$ and all $\set{v,w} \in E(\Gr)$,
    for any $\epsilon > 0$ and $o \in V(\Gr)$, one sees that the event
    \begin{align*}
        D_{r,\epsilon}(\Gr,o) := &\{
        \bs{o}=o,\\
        &N_{\bs{\Gr}}(\bs{o},r) = \Gr,\\
        &\forall u \in V(\Gr), d_{\Xiuniv}(\xi_V(u), \xi_u) < \epsilon,\\
        &\forall  \set{v,w} \in E(\Gr), d_{\Xiuniv}(\xi_E(v,w), \xi_{v,w})<\epsilon\}
    \end{align*}
    is measurable.
    Since $V$ is countable and $\Gr$ is a finite graph, there are at most
    countably many rooted isomorphic copies of $(\Gr, o)$ that can be made with
    vertices in $V$.
    It follows that the event $\set{d_{\mathcal{G}_*}([\bs{\Gr},\bs{o}], [\Gr,o]) < \epsilon}$ is a countable
    union of the events $D_{\lceil \frac{1}{\epsilon} \rceil,\epsilon}(\rho(\Gr,o))$ with $\rho$ 
    ranging over the countable collection of such rooted isomorphisms of $(\Gr, o)$.
    Hence $\omega \mapsto [\bs{\Gr}(\omega),\bs{o}(\omega)]$ is measurable.
\end{proof}

\begin{proof}[Proof of~\Cref{prop:naiive-unimodularity-in-G}]
	First suppose that $X_0$ is independent of $\G$ and uniformly distributed
    on a finite $S$.
	Let $N$ be the cardinality of $S$.
	Let $g:\mathcal{G}_{**} \to \R_{\geq 0}$ supported on directed neighbors be
	given. Then
	\begin{align*}
		\E\sum_{v \in V(\G)} g[\G,(0,X_0), v]
		&= \frac{1}{N}\sum_{x \in S}\E\sum_{y\in S} g[\G,(0,x),(1,y)]\\
		&= \frac{1}{N}\sum_{x,y\in S}\E [g[\G,(0,x),(1,y)]]\\
		&= \frac{1}{N}\sum_{x,y\in S}\E [g[\G,(-1,x),(0,y)]]\\
		&= \frac{1}{N}\sum_{y \in S}\E \sum_{x \in S} g[\G,(-1,x),(0,y)]\\
        &= \E\sum_{v \in V(\G)} g[\G,v,(0,X_0)],
	\end{align*}
	where in the third equality time-homogeneity of $\G$ is used.
	It follows that in this case $\G$ is unimodular.
    
    Next suppose $[\G, (0,X_0)]$ is unimodular.
    Let $\eta$ be a vertex-shift that follows the arrows in $\G$.
    For example, define for each network $\Gr$ and $u \in V(\Gr)$
    the vertex-shift by $\eta_\Gr(u):= v$ if there is a unique outgoing edge from $u$ and this edge terminates at $v$,
    or $\eta_\Gr(u):= u$ if this condition is not met for any $v$.
    Since $\G$ is connected, its $\eta$-foils are $\Fam{\G_t}_{t \in \Z}$.
    Let the mark of a vertex $v$ be denoted $(s(v), \xi(v))$, and let $v \sim w$ denote
    that $v$ and $w$ are in the same $\eta$-foil.
    Fix $x,y \in S$ and let $g[G,v,w] := 1_{\set{s(v) = x, s(w)=y, v \sim w}}$.
    Then the mass transport principle implies
    \[
        \P( X_0 = x) =\E \sum_{v \in V(\G)} g[\G, (0,X_0), v] = \E\sum_{v \in V(\G)} g[\G, v, (0,X_0)] = \P(X_0 = y),
    \]
    so $X_0$ is uniformly distributed on $S$.

    Next let $X_0$ be the output of the CFTP algorithm in the standard CFTP setup.
    Suppose $[\G, (0,X_0)]$ is unimodular.
    Since $(0,X_0)$ has one outgoing edge in $\G$, unimodularity implies that on average it has
    one incoming edge.
    But, being the output of the CFTP algorithm, $(0,X_0)$ a.s.\ has at least one incoming edge.
    Hence $(0,X_0)$ a.s.\ has exactly one incoming edge.
    By unimodularity, it follows that a.s.\ every vertex in $\G$ has exactly one incoming edge.
    Since $\G$ is a tree, this is only possibly if $S$ has a single element.
    If $S$ has only a single element unimodularity is immediate.
\end{proof}

\subsection{List of Mass Transports}\label{sec:list-of-mass-transports}
As mentioned in \Cref{sec:MBEFFs}, the proof style of \Cref{prop:mbeff-is-locally-finite}
can be used to prove many equalities and inequalities in mean.
A list is provided giving mass transports, followed by the results they give after applying the boilerplate
proof style with these mass transports.
Drawing a picture for each transport helps significantly in computing $w^+$ and $w^-$ for the given transports.
In all of the following, 
$\beta$ is the union of all bi-recurrent paths in $\B$.

\begin{enumerate}
    \item Send mass $1$ from each $s$ to all times $t$ strictly after $s$ and strictly before $F^{(s,x^*)}$ returns to $x^*$.
    \item[$\bullet$] $\E[\#\B_0] \leq \E[\sigma^{(0,x^*)}(x^*)]$, where
	    $\sigma^{(0,x^*)}(x^*)$ is the time until return of $F^{(0,x^*)}$ to $x^*$.
    \item Fix $y \in S$. For each $s$, if $y \in \B_s$, send mass $1$ to the first time $t>s$ that $F^{(s,y)}$ hits $x^*$.
    \item[$\bullet$] $\P(y \in \B_0) = \E[\#\TR(0)^y]$, where $\TR(0) \subseteq\B$ is the subgraph of vertices that first return to $x^*$ at time $0$,
    i.e., the (possibly empty) subgraph of $\B$ of all $(t,y) \in V(\B)$ such that $\tau^{(t,y)}(x^*) = 0$,
    where $\tau^{(t,y)}(x^*)$ is the return time of $F^{(t,y)}$ to $x^*$.
    \item[$\bullet$] Summing over $y \in S$, one finds $\E[\#V(\TR(0))] = \E[\#\B_0]$.

    \item\label{it:f3} Send mass $1$ from each $s$ to the first time $t>s$ that $F^{(s,x^*)}_t = F^{(s', x^*)}_t$ for some $s'>t$.
    \item[$\bullet$] $\E[C(0)] = 1$, where $C(0)$ is the total number of paths $F^{(s,x^*)}$ that merge with a younger $F^{(s',x^*)}$ (i.e.\ with $s'>s$)
        for the first time at time $0$.
    \item[$\bullet$] $\P(\#\B_1 \leq \#\B_0-k) \leq \P(C(1) \geq k+1) \leq \frac{1}{k+1}$ for all $k \in \N$.
    \item 
        Fix $y \in S$.
        For each $s$, send mass $1$ to each time $t$ that $F^{(s,x^*)}_t = y$ and $t$ is strictly before $F^{(s,x^*)}$ merges with the unique bi-recurrent path in its component of $\B$.
    \item[$\bullet$]
        $\E[ N^{(0,x^*)}_0(y ; \beta)] = \E[1_{\set{y \in \B_0\setminus\beta_0}}\#V^{x^*}(D^{(0,y)} \cap V(\B))]$,
        where $N^{(0,x^*)}_0(y ; \beta)$
        denotes the number of visits (potentially $0$) of $F^{(0,x^*)}$ to $y$ strictly before merging with $\beta$.
    \item[$\bullet$]
        Summing over $y \in S$,
        $\E[ \sigma^{(0,x^*)}_0(\beta)] = \E[\#V^{x^*}(D^{V_0(\B)\setminus V_0(\beta)} \cap V(\B))]$,
        where $\sigma^{(0,x^*)}_0(\beta)$ is the number of steps (potentially $0$) before $F^{(0,x^*)}$ merges with $\beta$,
        and $D^{V_0(\B)\setminus V_0(\beta)}$ is the set of all descendants of all $v \in V_0(\B)\setminus V_0(\beta)$.
    \item Fix $y \in S$. For each $s$, if $y \in \B_s$ send mass $1$ to the first time $t$ that $F^{(s,y)}$
        is on the bi-recurrent path in its component of $\B$.
    \item[$\bullet$] $\P(y \in \B_0) = \E[\# V^{y}(D^{V_0(\beta), M} \cap V(\B))]$, where $D^{V_0(\beta), M}$ denotes the union of $V_0(\beta)$
        with their \defn{mortal descendants}, i.e.\ those descendants with only finitely many descendants and whose first ancestor in $\beta$ is at time $0$.
    \item[$\bullet$] Summing over $y \in S$, one finds $\E[\#\B_0] = \E[\# (D^{V_0(\beta),M} \cap V(\B))]$.
    \item Fix $y \in S$ and suppose $\G$ is an \eft{} and $\Fam{\beta_t}_{t \in \Z}$ is the bi-recurrent path in $\G$.
        For each $t$, if $\beta_t = y$ send mass $1$ backwards to the most recent time $s<t$ such that $\beta_s = x^*$.
    \item[$\bullet$] $\E[N^{(0,x^*)}(y;x^*) 1_{\beta_0 = x^*}] = \P(\beta_0 = y)$,
        where $N^{(0,x^*)}(y;x^*)$ denotes the number of visits of $F^{(0,x^*)}$ to $y$
        before returning to $x^*$, including the initial visit if $y=x^*$.
    \item[$\bullet$] Summing over $y \in S$, one finds $\E[\sigma^{(0,x^*)}(x^*) 1_{\set{\beta_0 = x^*}}] = 1$.
    \item[$\bullet$] If $\G$ is also Markovian, then the previous points reduce to the classical cycle formulas,
        $\E[N^{(0,x^*)}(y;x^*)] \pi(x^*) = \pi(y)$ and $\E[\sigma^{(0,x^*)}(x^*)] \pi(x) = 1$,
        where $\pi$ is the stationary distribution of the Markov chain.
\end{enumerate}

Instead of using the unimodularity of $\Z$ and specifying a mass transport $w=w(s,t)$ for $s,t \in \Z$, 
one may also use the unimodular version of $\B$ (that is, the random network with distribution $\Psq$) and specify a 
mass transport $w=w[\Gr, u,v]$ for all networks $\Gr$ and all $u,v \in V(\Gr)$.
Some mass transports are much easier to write in this way.
For example, the mass transport in \cref*{it:f3} above also follows from the mass transport $w[\Gr,u,v] = 1$ if $v$ is
the unique out-neighbor of $u$ in $\Gr$.
However, strictly speaking, there are no results using a mass transport on $\B$
that could not also be proved with a mass transport on $\Z$.
Indeed, if $w$ is a mass transport defined for all networks $\Gr$,
then with $[\overline{\B},\sq]$ denoting the identity map under $\Psq$,
\begin{align*}
    \Esq\left[\sum_{v \in V(\B)} w[\overline{\B},\sq, v]\right] = \Esq\left[\sum_{v \in V(\B)} w[\overline{\B}, v, \sq]\right]
\end{align*}
may be rewritten as
\begin{align*}
    \frac{1}{\E[\#\B_0]}\E\left[\sum_{t \in \Z}\hat{w}(0,t)\right]
    = \frac{1}{\E[\#\B_0]}\E\left[\sum_{t \in \Z}\hat{w}(t,0)\right]
\end{align*}
where
\begin{align*}
    \hat{w}(s,t) := \sum_{u \in V_s(\B)} \sum_{v \in V_t(\B)} w[\overline{\B},u,v],\qquad s,t \in \Z
\end{align*}
is a mass transport on $\Z$.
That being said, the reader is encouraged the ponder
the sequence of mass transports on $\Z$ that would be required to prove
a result like the classification theorem, \Cref{thm:foil-classification-theorem},
for the network $[\overline{\B},\sq]$ directly.
It seems more elegant to call upon the machinery of unimodular networks
when convenient instead.

\section*{Acknowledgments}
This work was supported by a grant of the Simons Foundation (\#197982 to The
University of Texas at Austin).
The second author thanks the Research and Technology Vice-presidency of Sharif
University of Technology for its support.

\begingroup
    \section{References}
    \renewcommand{\section}[2]{}
    \bibliographystyle{abbrv}

\begin{thebibliography}{10}

\bibitem{aldous2007processes}
D.~Aldous and R.~Lyons.
\newblock Processes on unimodular random networks.
\newblock {\em Electron. J. Probab.}, 12:no. 54, 1454--1508, 2007.

\bibitem{baccellieternal}
F.~Baccelli, M.-O. Haji-Mirsadeghi, and A.~Khezeli.
\newblock Eternal family trees and dynamics on unimodular random graphs.
\newblock {\em arXiv preprint arXiv:1608.05940}, 2018.
\newblock To appear in the speccial issue of Contemporary Mathematics on
  Unimodularity in Randomly Generated Graphs.

\bibitem{baccelli2018renewal}
F.~Baccelli and A.~Sodre.
\newblock Renewal population dynamics and their eternal family trees.
\newblock {\em arXiv preprint arXiv:1803.08081}, 2018.

\bibitem{borovkov1994two}
A.~Borovkov and S.~G. Foss.
\newblock Two ergodicity criteria for stochastically recursive sequences.
\newblock {\em Acta Appl. Math.}, 34(1-2):125--134, 1994.

\bibitem{borovkov2013stochastic}
A.~A. Borovkov.
\newblock {\em Probability theory}.
\newblock Universitext. Springer, London, 2013.
\newblock Translated from the 2009 Russian fifth edition by O. B. Borovkova and
  P. S. Ruzankin, Edited by K. A. Borovkov.

\bibitem{borovkov1992stochastically}
A.~A. Borovkov and S.~G. Foss.
\newblock Stochastically recursive sequences and their generalizations.
\newblock {\em Siberian Adv. Math.}, 2(1):16--81, 1992.
\newblock Siberian Advances in Mathematics.

\bibitem{connor2007perfect}
S.~B. Connor and W.~S. Kendall.
\newblock Perfect simulation for a class of positive recurrent {M}arkov chains.
\newblock {\em Ann. Appl. Probab.}, 17(3):781--808, 2007.

\bibitem{fill1997interruptible}
J.~A. Fill.
\newblock An interruptible algorithm for perfect sampling via {M}arkov chains.
\newblock In {\em S{TOC} '97 ({E}l {P}aso, {TX})}, pages 688--695. ACM, New
  York, 1999.

\bibitem{foss2003extended}
S.~Foss and T.~Konstantopoulos.
\newblock Extended renovation theory and limit theorems for stochastic ordered
  graphs.
\newblock {\em Markov Process. Related Fields}, 9(3):413--468, 2003.

\bibitem{foss1998perfect}
S.~G. Foss and R.~L. Tweedie.
\newblock Perfect simulation and backward coupling.
\newblock {\em Comm. Statist. Stochastic Models}, 14(1-2):187--203, 1998.
\newblock Special issue in honor of Marcel F. Neuts.

\bibitem{haggstrom1999exact}
O.~H\"{a}ggstr\"{o}m and K.~Nelander.
\newblock On exact simulation of {M}arkov random fields using coupling from the
  past.
\newblock {\em Scand. J. Statist.}, 26(3):395--411, 1999.

\bibitem{haggstrom1999characterization}
O.~H\"{a}ggstr\"{o}m, M.-C. N.~M. van Lieshout, and J.~M\o{}ller.
\newblock Characterization results and {M}arkov chain {M}onte {C}arlo
  algorithms including exact simulation for some spatial point processes.
\newblock {\em Bernoulli}, 5(4):641--658, 1999.

\bibitem{kendall1998perfect}
W.~S. Kendall.
\newblock Perfect simulation for the area-interaction point process.
\newblock In {\em Probability towards 2000 ({N}ew {Y}ork, 1995)}, volume 128 of
  {\em Lect. Notes Stat.}, pages 218--234. Springer, New York, 1998.

\bibitem{kendall2004geometric}
W.~S. Kendall.
\newblock Geometric ergodicity and perfect simulation.
\newblock {\em Electron. Comm. Probab.}, 9:140--151, 2004.

\bibitem{kendall2000perfect}
W.~S. Kendall and J.~M\o{}ller.
\newblock Perfect simulation using dominating processes on ordered spaces, with
  application to locally stable point processes.
\newblock {\em Adv. in Appl. Probab.}, 32(3):844--865, 2000.

\bibitem{khezeli2017shift}
A.~Khezeli.
\newblock Shift-coupling of random rooted graphs and networks.
\newblock {\em arXiv preprint arXiv:1703.06619}, 2017.

\bibitem{lindenstrauss2001pointwise}
E.~Lindenstrauss.
\newblock Pointwise theorems for amenable groups.
\newblock {\em Invent. Math.}, 146(2):259--295, 2001.

\bibitem{meng2000towards}
X.-L. Meng.
\newblock Towards a more general {P}ropp-{W}ilson algorithm: multistage
  backward coupling.
\newblock In {\em Monte {C}arlo methods ({T}oronto, {ON}, 1998)}, volume~26 of
  {\em Fields Inst. Commun.}, pages 85--93. Amer. Math. Soc., Providence, RI,
  2000.

\bibitem{meyn2009markov}
S.~Meyn and R.~L. Tweedie.
\newblock {\em Markov chains and stochastic stability}.
\newblock Cambridge University Press, Cambridge, second edition, 2009.
\newblock With a prologue by Peter W. Glynn.

\bibitem{moller1999perfect}
J.~M\o{}ller.
\newblock Perfect simulation of conditionally specified models.
\newblock {\em J. R. Stat. Soc. Ser. B Stat. Methodol.}, 61(1):251--264, 1999.

\bibitem{moller2001review}
J.~M\o{}ller.
\newblock A review of perfect simulation in stochastic geometry.
\newblock In {\em Selected {P}roceedings of the {S}ymposium on {I}nference for
  {S}tochastic {P}rocesses ({A}thens, {GA}, 2000)}, volume~37 of {\em IMS
  Lecture Notes Monogr. Ser.}, pages 333--355. Inst. Math. Statist., Beachwood,
  OH, 2001.

\bibitem{murdoch1998exact}
D.~J. Murdoch and P.~J. Green.
\newblock Exact sampling from a continuous state space.
\newblock {\em Scand. J. Statist.}, 25(3):483--502, 1998.

\bibitem{propp1998coupling}
J.~Propp and D.~Wilson.
\newblock Coupling from the past: a user's guide.
\newblock In {\em Microsurveys in discrete probability ({P}rinceton, {NJ},
  1997)}, volume~41 of {\em DIMACS Ser. Discrete Math. Theoret. Comput. Sci.},
  pages 181--192. Amer. Math. Soc., Providence, RI, 1998.

\bibitem{propp1996exact}
J.~G. Propp and D.~B. Wilson.
\newblock Exact sampling with coupled {M}arkov chains and applications to
  statistical mechanics.
\newblock In {\em Proceedings of the {S}eventh {I}nternational {C}onference on
  {R}andom {S}tructures and {A}lgorithms ({A}tlanta, {GA}, 1995)}, volume~9,
  pages 223--252, 1996.

\bibitem{propp1998get}
J.~G. Propp and D.~B. Wilson.
\newblock How to get a perfectly random sample from a generic {M}arkov chain
  and generate a random spanning tree of a directed graph.
\newblock {\em J. Algorithms}, 27(2):170--217, 1998.
\newblock 7th Annual ACM-SIAM Symposium on Discrete Algorithms (Atlanta, GA,
  1996).

\bibitem{wilson2000couple}
D.~B. Wilson.
\newblock How to couple from the past using a read-once source of randomness.
\newblock {\em Random Structures Algorithms}, 16(1):85--113, 2000.

\end{thebibliography}

\endgroup
\end{document}